\documentclass[a4paper]{article}
\usepackage[
  a4paper,
  left=3cm,
  right=3cm,
  top=3cm,
  bottom=3cm
]{geometry}

\usepackage{subcaption}
\usepackage{multirow}
\usepackage{mathrsfs}
\usepackage{amsmath}
\usepackage{hyperref}
\usepackage{graphicx} 
\usepackage{subcaption}    
\usepackage{caption}       
\usepackage{float}         
\usepackage{amssymb,amsthm}
\usepackage{verbatim}
\usepackage{hyperref}
\usepackage[shortlabels]{enumitem}
\usepackage{parskip}
\usepackage{xcolor}
\numberwithin{equation}{section}
\theoremstyle{plain}
\newtheorem{theorem}{Theorem}[section]
\newtheorem{lemma}[theorem]{Lemma}
\newtheorem{proposition}[theorem]{Proposition}
\newtheorem{corollary}[theorem]{Corollary}
\newtheorem{assumption}{Assumption}
\theoremstyle{definition}
\newtheorem{definition}[theorem]{Definition}
\newtheorem{remark}[theorem]{Remark}

\newcommand{\N}{\mathbb{N}}

 \title{A priori regularity of the reverse heat flow
and dimension-dependent complexity
of higher-order diffusion samplers}

\author{Xixian Wang\thanks{Division of Mathematical Sciences, School of Physical and Mathematical Sciences, Nanyang Technological University, Singapore. xixian001@e.ntu.edu.sg}, Zhongjian Wang\thanks{Division of Mathematical Sciences, School of Physical and Mathematical Sciences, Nanyang Technological University, Singapore. zhongjian.wang@ntu.edu.sg (Corresponding)}}
\begin{document}
\maketitle

\begin{abstract}
We establish arbitrary-order a priori regularity estimates for
the reverse heat flow in the Ornstein--Uhlenbeck setting, that is,
for the probability-flow ODE of score-based diffusion models.
For compactly supported, possibly singular targets satisfying
a dimension-uniform doubling condition on supporting-cap masses,
we bound the iterated material derivatives of the flow and their
first spatial derivatives pointwise, with constants independent of
the dimension $d$ and explicit in time and support radius.
The proof rests on two ingredients: a spatially uniform bound on
normal-direction posterior fluctuations, which is the only point
where the geometry of the target enters, and a centered
posterior-moment normal form closed under material differentiation.
This algebraic structure, together with a weighted regularity
calculus, yields quantitative bounds at every order, including the
mixed space--time directional derivatives required by Runge--Kutta
stages.
As an application, for exact-score sampling stopped at forward time
$\delta>0$, order-$p$ Taylor and explicit Runge--Kutta schemes reach
accuracy $\varepsilon$ in $\widetilde O(d^{1/p}\varepsilon^{-1/p})$
steps in total variation, and the Taylor scheme in
$\widetilde O(d^{1/(2p)}\varepsilon^{-1/p})$ steps in $W_2$.
\end{abstract}

\textbf{keywords:}{
reverse heat flow, probability-flow ODE, higher-order samplers,
Runge--Kutta methods, a priori estimates, doubling condition,
iteration complexity}

\section{Introduction}

Sampling from high-dimensional probability distributions is a
fundamental problem in computational mathematics and lies at the
heart of modern generative modeling. Diffusion-based generative
models provide a prominent approach for this problem
\cite{ho2020denoising,song2020score}.
Starting from data $X_0\sim P_0$, where $P_0$ is the target distribution,
a forward diffusion gradually transforms the data into a tractable prior
distribution.
In the Ornstein--Uhlenbeck
setting considered in this work, the forward process admits the explicit representation
\begin{equation*}
        X_t
    =
    e^{-t/2}X_0+\sqrt{1-e^{-t}}Z,
    \qquad
    Z\sim N(0,I_d),
\end{equation*}
where $Z$ is independent of $X_0$. The law of $X_t$, denoted as $P_t$, is a Gaussian smoothing of
$P_0$ and $P_t$ converges to the standard Gaussian distribution as $t\to\infty$. The regularizing properties and long-time behavior of the forward
OU semigroup are standard topics in the theory of diffusion
semigroups. In particular, the Bakry--{\'E}mery framework relates
curvature bounds to gradient estimates, logarithmic Sobolev
inequalities, and hypercontractivity
\cite{bakry1985diffusions,bakry2014analysis}. 

Sampling is based on reversing the forward evolution of
the marginal laws, from $P_T$ to $P_0$, with $P_T$
approximated in practice by the standard Gaussian distribution. This can be accomplished either by the stochastic reverse-time
diffusion \cite{haussmann1986time,And_80}
or by a deterministic probability-flow ODE
\cite{song2020score}. In the OU setting, the marginals $P_t$ are the
Ornstein--Uhlenbeck heat flow of $P_0$ relative to the Gaussian
reference measure, and the deterministic flow coincides, up to a
constant time rescaling, with the reverse heat-flow transport of
\cite{kim2012generalization}; see Remark~\ref{rem:reverse-heat-flow}.
We therefore use the terms \emph{reverse heat flow} and
\emph{reverse probability flow} interchangeably.
 Writing $p_t$ for the density of $P_t$, we obtain the
probability-flow ODE by rewriting the forward Fokker--Planck
equation as a continuity equation and reversing time.
The identity $\nabla\cdot(p_tD_x\log p_t)=\Delta p_t$
expresses the diffusion term as a transport term involving
the \emph{score function} $D_x\log p_t$;
see Section~\ref{sec:Preliminaries}.
Under the Gaussian smoothing above, Tweedie's formula
expresses this score in terms of the posterior mean
$\mathbb E[X_0\mid X_t=x]$.
This posterior mean minimizes the expected squared error
in predicting $X_0$ from $X_t$.
The resulting least-squares denoising problem provides
an equivalent formulation of denoising score matching
\cite{meng2021estimating,lu2023mathematical}.
In this work, this posterior representation provides
the bridge from geometric properties of $P_0$ to regularity
estimates for the exact probability flow.

A practical sampler replaces three idealized ingredients: the terminal law
$P_T$ is approximated by a tractable prior distribution $N(0,I_d)$, the exact score
is replaced by a learned approximation by neural networks, and the continuous reverse dynamics are approximated by a finite-step numerical
integrator. The resulting sampling error is therefore
naturally decomposed into initialization, score-estimation, and
discretization errors. In this work, we focus on the discretization error of the
exact probability-flow ODE.
Such focus is natural. The initialization error, that is, the
discrepancy between $P_T$ and the standard Gaussian prior, is retained
explicitly in our final bounds and decays exponentially in $T$. The
additional error incurred by representing the Gaussian prior through
finitely many samples is governed by the convergence rate of empirical
measures \cite{fournier2015rate} and is propagated through the numerical
flow by the stability estimates of
Section~\ref{sec:high-order-convergence}. The score-estimation error is
related to the approximation theory of prescribed neural network
structures and to practical training procedures, for instance
\cite{neufeld2026universal}, and lies outside the scope of the present
analysis.

The discretization analysis also yields bounds on the number
of time steps needed to achieve a prescribed accuracy.
Although these bounds depend on several problem parameters,
we pay particular attention to the ambient dimension $d$,
since applications of diffusion models typically involve
high-dimensional data. For example, even a $256\times256$ RGB
image has $196\,608$ scalar coordinates.
A convergence bound that leaves its dimension dependence
implicit may therefore give little guidance on the number
of steps sufficient for accurate sampling at the data
dimensions encountered in practice.

A substantial convergence theory has been developed for first-order
discretizations of reverse ODEs and SDEs. An early result of De Bortoli \cite{de2022convergence} established quantitative
convergence guarantees in Wasserstein-1 distance under the manifold hypothesis, allowing, in
particular, for target distributions without a density with respect to
Lebesgue measure.
Subsequent works established and sharpened Wasserstein-2 convergence
guarantees under Gaussian, log-concave,
weakly log-concave, or other structural assumptions on the data distribution
\cite{pierretdiffusion,gao2025wasserstein,xixianwasserstein,
silveribeyond,beyler2025convergence,bruno2025wasserstein,meng2025pathway}. Complementary convergence guarantees have also been obtained in total
variation and Kullback--Leibler divergence
\cite{lee2022convergence,chen2022sampling,chen2023improved,benton2024nearly,
conforti2024klconvergenceguaranteesscore}.

In the Wasserstein-2 setting, the convergence rate has reached a sharp first-order benchmark. Under the Gaussian-tail assumption of \cite{xixianwasserstein} or
similar regularity assumptions, Euler-type discretizations of the
reverse flow achieve, up to logarithmic and fixed
target-dependent factors, an exact-score discretization error of order
$
    \sqrt d\,h.
$
Consequently, the number $N$ of time steps required to reach $W_2$ accuracy
$\varepsilon$ scales as
$
    N
    =
    \widetilde O
    \left(
        \sqrt d\,\varepsilon^{-1}
    \right).
$
The corresponding scaling is sharp for first-order probability-flow
discretizations, even for Gaussian targets
\cite{gao2025wasserstein}. Thus, further improvement in
the dependence on the accuracy, and potentially in the polynomial dependence
on the ambient dimension, 
cannot be obtained merely by sharpening the
analysis of the same first-order discretization. 

A natural way to move beyond this first-order barrier is to use higher-order
numerical schemes. On a fixed time interval, a stable numerical method of order $p$, applied to
a sufficiently regular vector field, has a global discretization error of the
form
$
    C_p h^p,
$
where $h$ is the time-step size and $C_p$ depends on higher-order derivatives
and stability properties of the underlying dynamics
\cite{hairer1993solving,kloeden1992numerical}. In particular, establishing an order-$p$ global
error generally requires control of the derivatives entering the local
truncation error, together with spatial regularity sufficient to propagate
the error between successive steps.
For the reverse probability flow, deriving the corresponding
quantitative estimates directly from structural assumptions
on the target distribution is a separate, model-specific problem. In addition, the Gaussian reference law is reached only as
$t\to\infty$ in the OU time scale.
Reducing the initialization error therefore requires increasing
the terminal time $T$, so the analysis must also control the
dependence of regularity and stability bounds on the time horizon.

Convergence analyses of these higher-order diffusion samplers are more
recent and considerably more limited. Such analyses have been developed for
interpolation-based schemes~\cite{li2025faster}, Runge--Kutta methods\cite{huang2025convergence,huang2025fast}, Heun-type methods~\cite{beyler2025convergence}, and
stochastic high-order discretizations~\cite{wu2024stochastic,pfarr2026higherorder}.
In much of this literature, the required regularity is imposed directly as
an assumption on the score, its approximation, or related derivative
quantities, rather than derived from structural properties of the target
distribution. For example, Huang, Huang and Lin \cite{huang2025convergence} analyze $p$-th order
Runge--Kutta schemes under bounds on mixed space--time derivatives of the
estimated score. The later analysis in \cite{huang2025fast} substantially
weakens the regularity required for the learned-score perturbation, reducing
it to the first two spatial derivatives, while the high-order exact-score
discretization branch still relies on higher derivatives of the exact
diffusion field. Such score-level assumptions can be difficult to verify
from properties of $P_0$ and do not generally follow from standard $L^2$
score-estimation guarantees.

Even when the required regularity is available, a second and distinct issue
remains: the prefactor $C_p$ may grow rapidly with the ambient dimension.
Thus, the numerical order alone does not determine whether a higher-order
method becomes genuinely more efficient in high dimension. This phenomenon
is visible in the exact-score Runge--Kutta analyses of
\cite{huang2025convergence,huang2025fast}, where the leading discretization
term in total variation has the form
$
    d(dh)^p
    =
    d^{p+1}h^p.
$
With $p$ and the remaining problem parameters fixed, this corresponds to a
step complexity of order
\(
    \widetilde O
    \left(
        d^{1+1/p}\varepsilon^{-1/p}
    \right).
\)
Higher order therefore improves the dependence on the target accuracy and
reduces part of the dimensional overhead, but the polynomial exponent of
$d$ remains at least $1$ and approaches $1$ as $p$ increases. Consequently,
the numerical order and the dimension dependence of the associated
regularity estimates must be analyzed jointly.

\medskip
Our \textbf{main contribution} is then to establish a priori regularity
estimates at arbitrary order for the exact reverse probability
flow directly from structural assumptions on the target
distribution. As an application, we use these estimates in standard
convergence arguments to obtain high-order error bounds.
The resulting step-complexity bounds have polynomial
dimension exponents that decrease as the numerical
order increases.
\medskip

For the deterministic probability-flow ODE considered here,
high-order discretization requires control of derivatives
along the reverse flow, referred to as \emph{material derivatives}
\cite{dziuk2013finite}.
In the OU setting, the material differentiation along the reverse flow
is represented in the variables $(t,x)$ by
\[
    L
    :=
    -\partial_t
    +
    \frac12
    \bigl(x+D_x\log p_t(x)\bigr)\cdot D_x.
\]
The term $-\partial_t$ accounts for the decrease of forward
time along the reverse trajectory, while the spatial term
differentiates in the direction of the reverse-flow velocity.
Successive applications of $L$ give the time derivatives
entering the Taylor expansion, represented by $L^j x$. In particular, the
local truncation error of an order-$p$ Taylor scheme is governed by
$L^{p+1}x$, while stability and global error propagation additionally require
control of the spatial derivatives $D_xL^j x$. The central regularity problem
is therefore to establish quantitative bounds for these quantities at
arbitrary order. Gaussian smoothing ensures that the density $p_t$ is smooth
for every positive diffusion time, but this qualitative smoothness does not
by itself provide the global-in-space and dimension-explicit estimates
required by the numerical analysis.

Our approach starts from the posterior representation of the
exact score provided by Tweedie's formula.
Writing
$
    \mu_{x,t}:=\mathcal L(X_0\mid X_t=x),
$
repeated spatial and time differentiation
produce expressions involving \emph{centered posterior moments}.
These are conditional expectations, given $X_t=x$, of
products of components of
$X_0-\mathbb E[X_0\mid X_t=x]$.
For example, the second-order centered moment is the
posterior covariance $
    \mathbb E_{\mu_{x,t}}\!\left[
        \left(
            X_0-\mathbb E_{\mu_{x,t}}[X_0]
        \right)^{\otimes2}
    \right].
$
Higher-order Tweedie identities express spatial derivatives of
the score in terms of posterior moments and have been used to
estimate these derivatives from samples
\cite{meng2021estimating}. Here we need iterated material
derivatives, which combine time differentiation, spatial
differentiation, and contraction with the probability-flow velocity.
This setting departs from the classical analysis of ODE and SDE
solvers, which assumes a velocity field with bounded derivatives---on a
compact neighbourhood of the trajectory
\cite{hairer1993solving,kloeden1992numerical}, globally
\cite{talay1990expansion}, or on a periodic domain
\cite{mattingly2010convergence}. The reverse velocity $\tfrac12\hat s$
grows linearly at infinity and $P_t$ has full support, so no such
assumption is available, and the regularity of the field must be
derived from the target distribution itself.
A central obstruction in the analysis is the \emph{normal-direction fluctuation} created by repeated material differentiation
$
    \langle x,X_0-\mathbb E[X_0\mid X_t=x]\rangle.
$
If $P_0$ is supported in $B_R(0)$, a direct compact-support estimate gives
only
$
    \left|
        \left\langle
            x,
            X_0-\mathbb E[X_0\mid X_t=x]
        \right\rangle
    \right|
    \leq
    2R|x|.
$
This estimate is inadequate for stability because the reverse state is not
spatially bounded: the distribution $P_t$ from Gaussian smoothing has full
support. Controlling the expression only through moments of $|x|$ along the
exact flow may suffice for parts of the local consistency analysis, but it
does not provide the global pointwise control needed to compare perturbed
and numerical trajectories. Moreover, repeated use of such a bound would
introduce additional dimension dependence through the typical size
$|x|\asymp\sqrt d$. This is one of the two mechanisms behind the dimension
exponents of existing Runge--Kutta bounds; see
Remark~\ref{rem:dimension-sources} for a detailed discussion. 

To overcome this obstruction, we impose a dimension-uniform
\emph{doubling condition} on the supporting-cap masses of the compactly
supported target $P_0$
(Assumption~\ref{ass:uniform-edge-doubling}).
In each direction, this condition bounds the mass within depth
$2r$ of a supporting hyperplane by a fixed multiple of the mass
within depth $r$, with the same constant for all directions,
depths $r>0$, and ambient dimensions.
The condition allows $P_0$ to be singular and yields moment bounds
for the centered normal projection above that are uniform in $x$
and independent of $d$
(Theorem~\ref{thm:sec-3-normal-fluctuation-two-scale}).
We also establish the converse
(Lemma~\ref{lem:variance-implies-doubling} and
Corollary~\ref{cor:doubling-necessary}): a spatially uniform $L^2$ bound
on the centered normal fluctuation at any fixed positive time forces the
doubling condition, with an explicit constant. For compactly supported
targets, the doubling condition is therefore necessary and sufficient
for such a bound.

Under the doubling condition above, we establish
dimension-independent pointwise estimates for $L^j x$ and
$D_xL^j x$ for every $j\geq1$, with explicit dependence on
time and the support radius
(Theorem~\ref{thm:weighted-estimates-Ljx}). The time weights in these estimates decay exponentially
as $t\to\infty$.
To prove these estimates, we construct an algebraic structure
from centered posterior moments: scalar moment expressions
form a commutative algebra, and vector moment expressions form
a module over it.
The associated \emph{centered posterior-moment normal form},
extended to include $x$ and the posterior mean, is closed
under $L$
(Lemma~\ref{lem:centered-moment-normal-form-Ljx}).
Every explicit occurrence of $x$ in the centered remainder
appears through a normal-direction insertion controlled by
the normal-fluctuation estimate.
Combining this algebraic representation with a weighted
regularity calculus yields the stated bounds.
We also establish the mixed directional derivative estimates
needed for explicit Runge--Kutta schemes
(Lemma~\ref{lem:sec3-weighted-directional-bound}).

As an application of these a priori regularity estimates,
we derive high-order convergence bounds for discretizations
of the exact probability-flow ODE.
Since $P_0$ may be singular, we stop the reverse flow at forward
time $\delta>0$ and compare the numerical output with the
smoothed target $P_\delta$. The exponential time decay in our regularity estimates yields
stability and discretization bounds that are uniform in the
terminal time $T$ for fixed $\delta>0$.
For the order-$p$ truncated Taylor scheme, a coupling and discrete
Gr\"onwall argument gives a $W_2$ discretization error of order
$\sqrt d\,h^p$
(Theorem~\ref{thm:global convergence truncated Taylor}).
In total variation, a comparison of the exact and numerical
transport maps gives a discretization error of order $d\,h^p$
for both the order-$p$ truncated Taylor method and fixed explicit
order-$p$ Runge--Kutta schemes
(Theorem~\ref{thm:weighted-exact-tv-bounds}).
For fixed target and early-stopping time $\delta$, these bounds
give step complexities
\[
    \widetilde O\left(
        d^{1/(2p)}\varepsilon^{-1/p}
    \right)
    \quad\text{in } W_2,
    \qquad
    \widetilde O\left(
        d^{1/p}\varepsilon^{-1/p}
    \right)
    \quad\text{in total variation}.
\]

Table~\ref{tab:intro-dimension-comparison} compares the leading
dimension dependence of these applications with existing
high-order Runge--Kutta bounds. The comparison isolates the exact-score
discretization error and suppresses logarithmic, order-dependent, and fixed
target-dependent factors. In the existing Runge--Kutta bounds, the
dimension exponent is $1+1/p$ and therefore approaches $1$ as $p$ increases.
By contrast, the exponents obtained here are $1/p$ in total variation and
$1/(2p)$ in $W_2$, both of which approach $0$. Thus, at the level of
polynomial dimension dependence, the existing bounds approach linear
dependence on $d$, whereas the bounds obtained here approach $O(1)$
dependence on the ambient dimension (Remark~\ref{rem:dimension-sources}).

We complement these convergence results with numerical experiments
on a two-point target embedded in $\mathbb R^d$, whose score is
available in closed form
(Section~\ref{subsec:numerical-illustration}).
For truncated Taylor schemes of orders $p=1,2,3,4$, the measured
coupled $L^2$ errors are consistent with the predicted
$h^p$ time-step convergence and $\sqrt d$ dependence on the
ambient dimension. Runge--Kutta schemes of the corresponding
orders exhibit similar behavior.
\begin{table}[t]
\centering
\small
\begin{tabular}{lccc}
\hline
method & metric & discretization term & step complexity \\
\hline
\cite{huang2025convergence,huang2025fast}, order-$p$ RK
& TV
& $d^{p+1}h^p$
& $\widetilde O(d^{1+1/p}\varepsilon^{-1/p})$
\\
this work, order-$p$ RK / Taylor
& TV
& $dh^p$
& $\widetilde O(d^{1/p}\varepsilon^{-1/p})$
\\
this work, order-$p$ Taylor
& $W_2$
& $\sqrt dh^p$
& $\widetilde O(d^{1/(2p)}\varepsilon^{-1/p})$
\\
\hline
\end{tabular}
\caption{Leading dimension dependence of the exact-score discretization analysis for
fixed $p$, $R$, $\delta$, and target-regularity constants. Logarithmic factors are
suppressed. The first row records the leading Runge--Kutta discretization dependence
in \cite{huang2025convergence,huang2025fast}; the last two rows are proved in
Sections~\ref{sec:tv-comparison} and~\ref{sec:global-convergence-taylor}, respectively.}
\label{tab:intro-dimension-comparison}
\end{table}

\medskip
The rest of the work is organized as follows.
Section~\ref{sec:Preliminaries} introduces the OU probability flow, the material
derivative, and the truncated Taylor scheme. Section~\ref{sec:sec3-centered-posterior-moment-normal-form}
establishes the normal-direction fluctuation bounds, develops
the algebraic structure and closure of the centered
posterior-moment normal form, and combines these ingredients
with a weighted calculus to obtain the regularity estimates. Section~\ref{sec:high-order-convergence} applies these
regularity estimates to obtain Wasserstein bounds for
Taylor schemes and total-variation bounds for Taylor and
Runge--Kutta schemes, together with the corresponding
step complexities, and concludes with a numerical illustration.
Appendix~\ref{app:edge-doubling} collects sufficient conditions and
examples for the doubling assumption, including a family of targets for
which it fails, and Appendix~\ref{app:recursive-material-representation}
gives a recursive conditional-expectation representation of the material
derivatives $L^jx$.

\section{Preliminaries}
\label{sec:Preliminaries}
This section prescribes the notation for the exact probability flow and its
high-order discretization. We first introduce a general affine Gaussian
perturbation and derive the associated posterior representation of the
score, forward diffusion, and reverse probability flow. We then specialize
to the Ornstein--Uhlenbeck process and define the material derivative $L$.
Finally, we show that the iterated derivatives $L^j x$ are precisely the
Taylor coefficients of the reverse flow and use them to define the
truncated Taylor scheme. These identities identify the regularity
quantities that will be estimated in
Section~\ref{sec:sec3-centered-posterior-moment-normal-form}.

\subsection{Affine Gaussian perturbations and probability flow}
We begin by specifying the one-time marginal distributions of the forward
perturbation. Let $X_0\sim P_0$ and $Z\sim N(0,I_d)$ be independent, and set
\[
X_t
=
\alpha(t)X_0+\beta(t)Z.
\]
We assume that $\alpha(t)$ and $\beta(t)^2$ are continuously differentiable and
satisfy
\[
\alpha(0)=1,
\qquad
\beta(0)=0,
\qquad
\alpha(t)>0,
\qquad
\beta(t)>0
\quad\text{for }t>0.
\]
Thus, the perturbation starts from the target distribution $P_0$. For the
schedules used in diffusion models, $\alpha(t)$ becomes small while
$\beta(t)$ approaches a prescribed terminal scale, so that $P_T$ is close
to the corresponding centered Gaussian distribution for sufficiently
large $T$.

Let $P_t$ denote the law of $X_t$. For every $t>0$, it has the smooth
density
\[
p_t(x)
=
\frac{1}{(2\pi\beta(t)^2)^{d/2}}
\int_{\mathbb R^d}
\exp\left(
-\frac{|x-\alpha(t)y|^2}{2\beta(t)^2}
\right)
\,dP_0(y).
\]
Associated with $(x,t)$, we introduce the posterior probability measure
\[
    \mu_{x,t}(dy)
    =
    \frac{
        \exp\left(
            -\frac{|x-\alpha(t)y|^2}{2\beta(t)^2}
        \right)dP_0(y)
    }{
        \displaystyle
        \int_{\mathbb R^d}
        \exp\left(
            -\frac{|x-\alpha(t)y'|^2}{2\beta(t)^2}
        \right)dP_0(y')
    }.
\]
It is the conditional law of $X_0$ given $X_t=x$. Differentiating the
Gaussian kernel with respect to $x$ gives the posterior-mean representation of the score 
\[
    D_x\log p_t(x)
    =
    -\frac{1}{\beta(t)^2}x
    +
    \frac{\alpha(t)}{\beta(t)^2}
    \mathbb E_{\mu_{x,t}}[Y],\qquad Y\sim \mu_{x,t}.
\]
Repeated spatial differentiation of the posterior mean then gives rise to centered moments of $\mu_{x,t}$, which will serve as the basic building blocks for our derivative estimates.
\paragraph{Forward diffusion and the Fokker--Planck equation.}
The same
family of marginal distributions $(P_t)_{t\in[0,T]}$ can be realized by a
linear diffusion
\[
    dX_t=f(t)X_t\,dt+g(t)\,dW_t,
\]
where
\[
    f(t)=\frac{\alpha'(t)}{\alpha(t)},\qquad
    g(t)^2
    =
    \frac{d}{dt}\beta(t)^2
    -
    2f(t)\beta(t)^2.
\]
Here we assume that the right-hand side defining
$g(t)^2$ is nonnegative.
For $t>0$, the density $p_t$ satisfies the Fokker--Planck equation
\[
    \partial_t p_t
    =
    -\operatorname{div}\bigl(f(t)x\,p_t\bigr)
    +
    \frac{g(t)^2}{2}\Delta p_t.
\]
We can rewrite this equation as
\[
\partial_t p_t+\operatorname{div}(v_tp_t)=0,
\qquad
v_t(x)
=
f(t)x-\frac{g(t)^2}{2}D_x\log p_t(x).
\]
This continuity-equation representation identifies a deterministic velocity
field that transports the same one-time marginals as the forward
diffusion.

\paragraph{Reverse-time dynamics and probability flow.}
We now traverse the family $(P_t)$ in the reverse direction, from $P_T$
toward the target distribution. To accommodate target distributions that
may be singular, we fix a terminal forward time $\delta\in[0,T)$ and
consider the reverse dynamics over the forward-time interval $[\delta,T]$.
When the score and the probability flow extend regularly to time zero, we
may take $\delta=0$ and recover $P_0$ exactly. Otherwise, we take
$\delta>0$ and stop at the smoothed distribution $P_\delta$. Set
\[
    \widetilde{X}_u=X_{T-u},
    \qquad
    u\in[0,T-\delta].
\]
Under the usual regularity assumptions, the reverse-time process
satisfies~\cite{And_80,haussmann1986time},
\[
    d\widetilde{X}_u
    =
    \left[
        -f(T-u)\widetilde{X}_u
        +
        g(T-u)^2D_x\log p_{T-u}(\widetilde{X}_u)
    \right]du
    +
    g(T-u)\,d\overline W_u.
\]
When initialized with $\widetilde X_0\sim P_T$, it satisfies
$\widetilde X_u\sim P_{T-u}$ and therefore reaches $P_\delta$ at
$u=T-\delta$.

The same family of marginals can also be transported deterministically.
In forward time, the probability-flow ODE is
\begin{equation*}
\frac{d}{dt}\overline X_t
=
f(t)\overline X_t
-
\frac{g(t)^2}{2}
D_x\log p_t(\overline X_t).
\end{equation*}
Its transport equation is precisely the continuity equation derived above.
For generative sampling, the ODE is initialized at time $T$ and integrated
backward. Equivalently, setting
\begin{equation*}
Y_u=\overline X_{T-u},
\qquad
0\leq u\leq T-\delta,
\end{equation*}
gives
\begin{equation*}
\frac{d}{du}Y_u
=
-f(T-u)Y_u
+
\frac{g(T-u)^2}{2}
D_x\log p_{T-u}(Y_u).
\end{equation*}
If $Y_0\sim P_T$, then $Y_u\sim P_{T-u}$. The reverse-time diffusion and
the reverse probability-flow ODE therefore have the same one-time
marginals, while their score coefficients differ by a factor of two. The
probability flow is deterministic once its initial state is fixed.

In practical samplers, the exact score is replaced by a learned
approximation. Here we work with the exact score and study the regularity
and discretization of the resulting deterministic probability flow.

\paragraph{Reduction to the Ornstein--Uhlenbeck normalization.}
Let $c(t):=\sqrt{\alpha(t)^2+\beta(t)^2}$ and
$\tau(t):=\log\bigl(1+\beta(t)^2/\alpha(t)^2\bigr)$. Then
\begin{equation*}
    X_t\overset{d}{=}c(t)\,\widetilde X_{\tau(t)},
    \qquad
    \widetilde X_\tau:=e^{-\tau/2}X_0+\sqrt{1-e^{-\tau}}\,Z,
\end{equation*}
where $\widetilde X$ is the Ornstein--Uhlenbeck process started from
$P_0$, whose marginals we denote by $\widetilde P_\tau$ with densities
$\tilde p_\tau$. Equivalently,
$P_t=(c(t)\,\cdot)_\#\widetilde P_{\tau(t)}$ and
\begin{equation*}
    D_x\log p_t(x)
    =
    c(t)^{-1}D_x\log\tilde p_{\tau(t)}\bigl(x/c(t)\bigr).
\end{equation*}
Whenever $\beta/\alpha$ is increasing, $\tau$ is a change of time, and
the probability-flow ODE of the schedule $(\alpha,\beta)$ is the image
of the Ornstein--Uhlenbeck reverse heat flow under the linear
space--time transformation $(t,x)\mapsto(\tau(t),x/c(t))$. The
regularity estimates of
Section~\ref{sec:sec3-centered-posterior-moment-normal-form} are
therefore expected to transfer to every affine Gaussian schedule, with
constants depending on the schedule through $c$, $\tau$, and their
derivatives up to the order considered; we do not pursue this here.
For this reason we work with the Ornstein--Uhlenbeck normalization from
now on.

\subsection{The reverse probability flow for the OU process}
\label{subsec:reverse probability flow for the OU process}
By the reduction above, it suffices to treat the
Ornstein--Uhlenbeck process
\begin{equation}
    \label{eq:standardOU}
     dX_t=-\frac12X_t\,dt+dW_t,\qquad
    t\in[0,T].
\end{equation}
Its one-time marginals admit the representation
$
X_t
\overset{d}{=}
e^{-t/2}X_0+\sqrt{1-e^{-t}}Z.
$ Thus, in the notation of the preceding
subsection,
\begin{equation*}
\alpha(t)=e^{-t/2},
\qquad
\beta(t)=\sqrt{1-e^{-t}},
\qquad
f(t)=-\frac12,
\qquad
g(t)=1.
\end{equation*}
Accordingly, the score identity becomes
\begin{equation*}
    D_x\log p_t(x)
    =
    -\frac{1}{1-e^{-t}}x
    +
    \frac{e^{-t/2}}{1-e^{-t}}\mathbb E_{\mu_{x,t}}[Y].
\end{equation*}
For the OU process, it is natural to introduce the modified score
$
\hat s(t,x)
:=
x+D_x\log p_t(x).
$
Indeed, the forward probability-flow velocity is
$-\frac12\hat s(t,x)$ and the exact reverse probability flow satisfies
\begin{equation}\label{eq:backward-ode}
        \frac{d}{du}Y_u
=
\frac12\hat s(T-u,Y_u),
\qquad
Y_0\sim P_T,
\qquad
0\leq u\leq T-\delta.
\end{equation}

\begin{remark}[Reverse heat flow]
\label{rem:reverse-heat-flow}
Let $\gamma_d=N(0,I_d)$ and let $(S_\tau)_{\tau\geq0}$ denote the
Ornstein--Uhlenbeck semigroup with generator $\Delta-x\cdot D_x$,
which is the heat flow relative to $\gamma_d$ used in
\cite{kim2012generalization}. The process
\eqref{eq:standardOU} is this diffusion run at half speed, so
$P_t=P_0S_{t/2}$, and for $t>0$
\begin{equation*}
    \hat s(t,x)
    =
    x+D_x\log p_t(x)
    =
    D_x\log\frac{dP_t}{d\gamma_d}(x).
\end{equation*}
Thus $-\frac12\hat s$ is the velocity field transporting the
relative densities $dP_t/d\gamma_d$ along the heat flow, and
\eqref{eq:backward-ode} is the reverse heat-flow transport of
\cite{kim2012generalization} from $P_T$ toward $P_\delta$, up to the
time change $\tau=t/2$. Equivalently, by
$X_t=e^{-t/2}\bigl(X_0+\sqrt{e^t-1}\,Z\bigr)$, the law $P_t$ is the
Euclidean heat flow of $P_0$ at time $e^t-1$ followed by the dilation
$x\mapsto e^{-t/2}x$. We use the names \emph{reverse heat flow} and
\emph{reverse probability flow} interchangeably.
\end{remark}

We emphasize the roles of the two time variables. The variable $t$ denotes
forward diffusion time, while $u$ denotes increasing reverse time. Along
the reverse trajectory, the corresponding forward time is $t=T-u$ and
therefore decreases as $u$ increases. Hence, for a smooth function
$\varphi(t,x)$ written in the forward-time variables, the chain rule gives
\begin{equation*}
\begin{aligned}
\frac{d}{du}\varphi(T-u,Y_u)
&=
-\partial_t\varphi(T-u,Y_u)
+
D_x\varphi(T-u,Y_u)
\left[
\frac{d}{du}Y_u
\right]
\\&=\left[
-\partial_t\varphi
+
\frac12D_x\varphi
\bigl[\hat s(t,x)\bigr]
\right](T-u,Y_u).
\end{aligned}
\end{equation*}
This leads to the material derivative
\begin{equation}
    L
    :=
    -\partial_t
    +
    \frac12\hat s(t,x)\cdot D_x.
\end{equation}
Although $L$ is written in the forward-time variables $(t,x)$, it
represents differentiation along the reverse trajectory
$u\mapsto(T-u,Y_u)$; since $L$ is the generator of the probability-flow
ODE, we also call it the \emph{flow derivative}.
Successive applications of $L$ therefore represent repeated
differentiation in reverse time and lead directly to the
Taylor expansion developed in the next subsection.

\subsection{Taylor expansion along the reverse flow}

For every sufficiently smooth vector- or tensor-valued function $\varphi(t,x)$, iterating the chain rule gives
\begin{equation*}
    \frac{d^j}{du^j}\varphi(T-u,Y_u)
    =
    (L^j\varphi)(T-u,Y_u), \qquad j\in \N_+.
\end{equation*}
In particular,
\begin{equation}
    \frac{d^j}{du^j}Y_u
    =
    (L^jx)(T-u,Y_u),
    \qquad
    j\in \N_+.
\end{equation}
Consequently, the time derivatives entering the Taylor expansion of the
exact reverse flow are completely characterized by the vector fields
$L^m x$. The main task is therefore to control these iterated flow
derivatives uniformly along the reverse trajectory.

\begin{proposition}[Taylor expansion along the backward flow]\label{prop:taylor-flow}
Let \(p\ge 0\), and let \(\varphi(t,x)\) be smooth enough so that \(L^{p+1}\varphi\) is well-defined. Then for every \(u\ge 0\) and \(h>0\) with \(u+h\le T-\delta\),
\begin{equation}
\varphi(T-(u+h),Y_{u+h})
=
\sum_{j=0}^{p}\frac{h^j}{j!}(L^j \varphi)(T-u,Y_u)
+
R_{p+1}^\varphi(u,h),
\end{equation}
where the remainder is given by
\begin{equation}
R_{p+1}^\varphi(u,h)
=
\int_0^h \frac{(h-r)^p}{p!}\,
(L^{p+1}\varphi)(T-(u+r),Y_{u+r})\,dr.
\end{equation}
\end{proposition}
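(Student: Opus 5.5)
The plan is to reduce the statement to the classical one-variable Taylor formula with integral remainder, applied to the scalar (or componentwise tensor) function
\[
    \psi(r):=\varphi\bigl(T-(u+r),Y_{u+r}\bigr),\qquad r\in[0,h].
\]
The interval $[u,u+h]$ lies in $[0,T-\delta]$, and the forward times $T-(u+r)$ stay in $[\delta,T]$. For $\delta>0$, or for interior points when $\delta=0$, the posterior representation shows that $\hat s$ is smooth on this region. Hence the trajectory $r\mapsto Y_{u+r}$ is a $C^1$ solution of \eqref{eq:backward-ode}, and $\psi$ inherits its regularity from $\varphi$ and $\hat s$.

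The first step is to identify the derivatives of $\psi$. By the chain-rule computation preceding the definition of $L$,
\[
    \frac{d}{dr}\phi\bigl(T-(u+r),Y_{u+r}\bigr)=(L\phi)\bigl(T-(u+r),Y_{u+r}\bigr)
\]
holds for every $C^1$ function $\phi$ of $(t,x)$. Applying this inductively with $\phi=L^{j-1}\varphi$, for $j=1,\dots,p+1$, gives
\[
    \psi^{(j)}(r)=(L^j\varphi)\bigl(T-(u+r),Y_{u+r}\bigr).
\]
This is exactly the iterated identity recorded just before the proposition. Each induction step requires $L^{j-1}\varphi$ to be $C^1$ in $(t,x)$, which is the content of the hypothesis that $L^{p+1}\varphi$ is well defined, read as: each $L^j\varphi$, $j\le p$, is continuously differentiable and $L^{p+1}\varphi$ is continuous. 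It follows that $\psi\in C^{p+1}([0,h])$.

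The second step is to apply Taylor's theorem with integral remainder,
\[
    \psi(h)=\sum_{j=0}^p\frac{h^j}{j!}\psi^{(j)}(0)+\int_0^h\frac{(h-r)^p}{p!}\psi^{(p+1)}(r)\,dr.
\]
This can be proved by induction on $p$ using integration by parts, starting from the fundamental theorem of calculus when $p=0$. Substituting the formula for $\psi^{(j)}$ then yields both the expansion and the stated remainder $R^\varphi_{p+1}(u,h)$. For vector- or tensor-valued $\varphi$, the argument is applied componentwise.

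The only genuine point to check is the regularity bookkeeping in the first step: making precise the smoothness needed so that the chain rule applies at each order, and confirming that the trajectory stays in the region $t\ge\delta$ where $\hat s$ is smooth. Everything else is the standard one-dimensional Taylor formula.
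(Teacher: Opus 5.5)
Your proposal is correct and follows the paper's route: the paper records the iterated chain-rule identity $\frac{d^j}{du^j}\varphi(T-u,Y_u)=(L^j\varphi)(T-u,Y_u)$ just before the proposition and gives no separate proof, so the statement is exactly the one-variable Taylor formula with integral remainder applied to $r\mapsto\varphi(T-(u+r),Y_{u+r})$, componentwise for vector or tensor fields. Your extra regularity bookkeeping is a reasonable sharpening of the paper's ``sufficiently smooth'' hypothesis: you read ``$L^{p+1}\varphi$ well defined'' as $L^j\varphi\in C^1$ for $j\le p$ with $L^{p+1}\varphi$ continuous, and you note that $\delta=0$ requires regularity up to forward time zero.
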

Applying Proposition~\ref{prop:taylor-flow} to the identity map gives
\begin{equation}\label{eq:taylor-x}
\begin{aligned}
    Y_{u+h}
&=
Y_u+\sum_{j=1}^{p}\frac{h^j}{j!}(L^j x)(T-u,Y_u)
+
R_{p+1}^x(u,h),\\
R_{p+1}^x(u,h)
&=
\int_0^h \frac{(h-r)^p}{p!}\,
(L^{p+1}x)(T-(u+r),Y_{u+r})\,dr.
\end{aligned}
\end{equation}
The local remainder therefore depends only on the next material derivative
$L^{p+1}x$.

\paragraph{The $p$-th order truncated Taylor scheme.}
Let
$
u_n=nh
$
denote the reverse-time grid, and let
$
t_n:=T-u_n
$
be the corresponding forward diffusion time.
Motivated by \eqref{eq:taylor-x}, we define the exact-score truncated \(p\)-th order Taylor scheme by
\begin{equation}\label{eq:taylor-scheme}
\begin{aligned}
Y_{n+1}^h
&:=
\Phi_h^{(p)}(u_n,Y_n^h),
\qquad
Y_0^h\sim N(0,I_d),
\\
\Phi_h^{(p)}(u,x)
&:=
x+
\sum_{j=1}^p
\frac{h^j}{j!}
(L^jx)(T-u,x).
\end{aligned}
\end{equation}
For the exact solution, the one-step defect is exactly
$R_{p+1}^x(u_n,h)$.
Stability of the Taylor map instead depends on the spatial
derivative:
\begin{equation*}
D_x\Phi_h^{(p)}(u,x)
=
I+
\sum_{j=1}^p
\frac{h^j}{j!}
D_x(L^jx)(T-u,x).
\end{equation*}
At this point, the numerical problem reduces to a regularity problem.
The local truncation error is controlled by $L^{p+1}x$, whereas stability
of the truncated Taylor map requires pointwise bounds on $D_x(L^j x)$.
Thus the two quantities to be controlled, at arbitrary order, are
\begin{equation*}
    L^j x
    \qquad\text{and}\qquad
    D_x(L^j x).
\end{equation*}
Directly expanding these iterated derivatives quickly becomes unwieldy.
Instead, Section~\ref{sec:sec3-centered-posterior-moment-normal-form} develops a common algebraic normal form consisting of
deterministic affine terms and centered posterior moments. The class is
closed under the material derivative $L$, and a weighted bookkeeping
argument then yields quantitative bounds for both families above.

For completeness, we note that the material derivatives also admit a
recursive conditional-expectation representation under the forward Gaussian
coupling, in a spirit similar to the higher-order Tweedie identities for
spatial score derivatives \cite{meng2021estimating}.
This representation provides a starting point for designing squared-error
learning objectives, with recursively defined quantities serving as
regression targets for material derivatives at successive orders.
While this representation is not used in the regularity or convergence
analysis, we record it separately in
Appendix~\ref{app:recursive-material-representation}.

\section{Target-derived regularity of the probability-flow ODE}
\label{sec:sec3-centered-posterior-moment-normal-form}

This section establishes the quantitative regularity estimates for the
reverse heat flow needed for the high-order numerical analysis. The
case $j=1$ follows from the posterior-covariance representation of
$D_x\hat s$ and compact support alone; the difficulty, and the content of
this section, lies in $j\geq2$, where the centered factor
$\langle Z,x\rangle$ first appears. The
argument separates into three steps. In Section~\ref{subsec:sec3.1-normal-fluctuation-centered-insertion}, we
control the normal-direction posterior fluctuation that appears when the
ambient variable $x$ enters a centered posterior moment. Then, in Section~\ref{subsec:sec3.2-algebraic-normal-form-closure}, we
construct a centered posterior-moment normal form that is closed under the
material derivative $L$. Finally, in Section~\ref{subsec:subsec3.3-Weighted estimates for the centered normal form} we combine this algebraic closure with
the fluctuation estimate through a weighted bookkeeping argument to obtain
bounds for $L^j x$ and $D_x(L^j x)$ at arbitrary order. 

Throughout this section, $t$ denotes forward diffusion time, consistently
with Section~\ref{subsec:reverse probability flow for the OU process}. All
posterior expectations are taken with respect to $\mu_{x,t}$. To simplify
the notation, within this section we write
\begin{equation*}
\bar y_{x,t}
:=
\mathbb E_{\mu_{x,t}}[Y],
\qquad
Z
:=
Y-\bar y_{x,t}.
\end{equation*}
For the algebraic bookkeeping below, we decompose the modified score as
\begin{equation*}
\hat s(t,x)
=
a(t)x+\lambda(t)\bar y_{x,t},
\qquad
a(t)
:=
-\frac{e^{-t}}{1-e^{-t}},
\qquad
\lambda(t)
:=
\frac{e^{-t/2}}{1-e^{-t}}.
\end{equation*}
This decomposition separates the explicit dependence on the ambient
variable $x$ from the posterior contribution. It is the starting point for
the constructed  centered normal form.

\subsection{Normal-direction fluctuation bounds for centered insertions}
\label{subsec:sec3.1-normal-fluctuation-centered-insertion}
Repeated material differentiation introduces factors of the form
$\langle Z,x\rangle$ into the posterior-moment expressions. A direct compact-support estimate would give a bound
of order $R|x|$, which is insufficient for the spatially uniform estimates needed
later. The purpose of this subsection is to show that the posterior exponential
tilt, together with the doubling condition introduced below, removes this
growth in $|x|$. More precisely, under the geometric
condition introduced below, we prove
\[
\mathbb E_{\mu_{x,t}}
\bigl[
|\langle Z,x\rangle|^q
\bigr]
\leq
C_q
\left(
e^{-qt/2}R^{2q}
+
e^{qt/2}
\right),
\qquad q\geq1,
\]
for targets supported in a ball of radius $R$. This estimate is the only place where the geometry of $P_0$ enters
directly into the subsequent high-order regularity analysis.

To formulate the assumption, suppose that $P_0$ has compact support
\begin{equation*}
M:=\operatorname{supp}P_0.
\end{equation*}
For $\theta\in\mathbb S^{d-1}$, define
\begin{equation*}
\begin{aligned}
U_\theta(y)
&:=
\sup_{y'\in M}\langle\theta,y'\rangle
-
\langle\theta,y\rangle,
\
F_\theta(u)
&:=
P_0\bigl(U_\theta(Y)\leq u\bigr),
\qquad u>0.
\end{aligned}
\end{equation*}
Here $U_\theta(y)$ is the depth of $y$ below the supporting hyperplane of
$M$ with outward normal $\theta$, and $F_\theta(u)$ is the $P_0$-mass of the
corresponding supporting cap of thickness $u$. Since
$M=\operatorname{supp}P_0$, one has $F_\theta(u)>0$ for every $u>0$.

\begin{assumption}[Dimension-uniform doubling]
\label{ass:uniform-edge-doubling}
There exists a constant $D_{\mathrm{UD}}\geq1$ such that
\begin{equation}
    F_\theta(2u)
    \leq
    D_{\mathrm{UD}}F_\theta(u),
    \qquad
    \forall \theta\in\mathbb S^{d-1},
    \quad \forall u>0.
\end{equation}
When a family of target distributions $P_0^{(d)}$ in different ambient
dimensions is considered, the same constant $D_{\mathrm{UD}}$ is required
to work for every $d$.
\end{assumption}
Geometrically, the assumption ensures that halving the thickness
of a supporting cap reduces its mass by at most a fixed factor. It is formulated directly in terms of cap mass and therefore also
allows singular targets. Examples and sufficient geometric conditions are
collected in Appendix~\ref{app:edge-doubling}.

The normal-direction bound follows from the following one-dimensional
estimate, which gives a uniform bound on the rescaled centered fluctuation.

\begin{lemma}[Centered exponential-tilt estimate]
\label{lem:centered-exponential-tilt-doubling}
Let $\nu$ be a finite nonzero measure supported on $[0,L]$. Set
$
    F_\nu(u):=\nu([0,u]),
$
and suppose that
\begin{equation*}
    F_\nu(2u)\leq D F_\nu(u),
    \qquad u>0,
\end{equation*}
for some $D\geq1$. For $\eta>0$, define the exponentially tilted
probability measure
\begin{equation*}
    \Pi_\eta(du)
    :=
    \frac{e^{-\eta u}\nu(du)}
    {\int e^{-\eta v}\nu(dv)}.
\end{equation*}
Let $U$ be a random variable with law $\Pi_\eta$. Then, for every $q\geq1$,
\begin{equation}\label{eq:centered exponential tilt estimate}
    \left\lVert
        \eta\bigl(U-\mathbb E_{\Pi_\eta}U\bigr)
    \right\rVert_{L^q(\Pi_\eta)}
    \leq
    4\left(
        \log D+\Gamma(q+1)^{1/q}
    \right),
\end{equation}
where $\Gamma$ denotes the Gamma function,
\(
\Gamma(s):=\int_0^\infty r^{s-1}e^{-r}dr.
\)

\end{lemma}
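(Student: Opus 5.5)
The plan is to reduce the centered estimate to an uncentered one, and then obtain the uncentered one from a single exponential-moment bound that comes directly from the doubling inequality. Since $U\geq0$, Minkowski's and Jensen's inequalities give
\[
\bigl\lVert \eta(U-\mathbb E_{\Pi_\eta}U)\bigr\rVert_{L^q(\Pi_\eta)}
\leq \lVert \eta U\rVert_{L^q(\Pi_\eta)}+\eta\,\mathbb E_{\Pi_\eta}U
\leq 2\lVert \eta U\rVert_{L^q(\Pi_\eta)} .
\]
It therefore suffices to prove
\[
\lVert \eta U\rVert_{L^q(\Pi_\eta)}\leq 2\bigl(\log D+\Gamma(q+1)^{1/q}\bigr).
\]

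\textbf{Key step: an exponential moment.} Write $Z(\eta):=\int e^{-\eta u}\nu(du)$. Then
\[
\mathbb E_{\Pi_\eta}\bigl[e^{\eta U/2}\bigr]=\frac{Z(\eta/2)}{Z(\eta)} .
\]
I would show $Z(\eta/2)\leq D\,Z(\eta)$. Integrating by parts (Fubini with $e^{-\eta u}=\int_u^\infty \eta e^{-\eta v}dv$) gives
\[
Z(\eta)=\int_0^\infty \eta e^{-\eta v}F_\nu(v)\,dv .
\]
Here the atom at $0$ is handled automatically, because $F_\nu(v)$ includes $\nu(\{0\})$. Applying the same formula to $Z(\eta/2)$ and substituting $v=2w$ yields
\[
Z(\eta/2)=\int_0^\infty \eta e^{-\eta w}F_\nu(2w)\,dw\leq D\int_0^\infty \eta e^{-\eta w}F_\nu(w)\,dw = D\,Z(\eta).
\]
Hence $\mathbb E_{\Pi_\eta}[e^{\eta U/2}]\leq D$. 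Since $\nu$ is supported on $[0,L]$, every integral is finite and no boundary terms arise.

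\textbf{Tail bound and conclusion.} By Markov's inequality, for every $r\geq0$,
\[
\Pi_\eta\bigl(\eta U/2>\log D+r\bigr)\leq D\,e^{-\log D-r}=e^{-r}.
\]
So $(\eta U/2-\log D)_+$ is stochastically dominated by a standard exponential variable $E$. Consequently
\[
\lVert \eta U/2\rVert_{L^q}\leq \log D+\lVert E\rVert_{L^q}=\log D+\Gamma(q+1)^{1/q}.
\]
Multiplying by $2$ and then by the centering factor $2$ gives exactly the constant $4$ in \eqref{eq:centered exponential tilt estimate}.

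The main obstacle I expect is the comparison $Z(\eta/2)\leq DZ(\eta)$. A first attempt via dyadic shells or polynomial growth $F_\nu(u)\lesssim (\eta u)^{\log_2 D}F_\nu(1/\eta)$ loses constants. The layer-cake representation combined with the rescaling $v=2w$ is what makes the doubling constant enter sharply, with only the factor $D$. The remaining steps are routine.
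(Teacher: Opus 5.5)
Your proposal is correct and follows essentially the same route as the paper. Both arguments use the layer-cake identity and the rescaling $v=2w$ to get $Z(\eta/2)\le D\,Z(\eta)$, then a Chernoff/Markov tail bound. That tail bound gives stochastic domination of $\eta U$ by $2\log D$ plus an exponential variable, and the factor $2$ from centering finishes the estimate. The only differences are cosmetic: you phrase the tail bound as Markov's inequality for $e^{\eta U/2}$, use a rate-$1$ rather than rate-$1/2$ exponential, and do the centering step first.
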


\begin{proof}
Let
$
    Z_\nu(\eta)
    :=
    \int e^{-\eta u}\nu(du).
$
Extending $F_\nu$ constantly beyond $L$ and integrating by parts gives
\begin{equation*}
    Z_\nu(\eta)
    =
    \eta\int_0^\infty e^{-\eta u}F_\nu(u)\,du.
\end{equation*}
After a change of variables, the doubling condition yields
\begin{equation*}
\begin{aligned}
    Z_\nu(\eta/2)
    &=
    \eta\int_0^\infty e^{-\eta v}F_\nu(2v)\,dv
    \\
    &\leq
    D\eta\int_0^\infty e^{-\eta v}F_\nu(v)\,dv
    =
    DZ_\nu(\eta).
\end{aligned}
\end{equation*}
It follows that, for every $z\geq0$,
\begin{equation*}
\begin{aligned}
    \Pi_\eta(\eta U\geq z)&=\frac{\int_{z/\eta}^\infty e^{-\eta u}\,\nu (du)}{Z_\nu(\eta)}\\
    &\leq e^{-z/2}\frac{\int_{z/\eta}^\infty e^{-\frac{\eta u}{2}}\,\nu (du)}{Z_\nu(\eta)}\leq
    e^{-z/2}
    \frac{Z_\nu(\eta/2)}{Z_\nu(\eta)}
\leq
    De^{-z/2}.
\end{aligned}
\end{equation*}
Thus $X:=\eta U$ is stochastically dominated by
$2\log D+E$, where $E\sim\operatorname{Exp}(1/2)$ is the exponential distribution with rate $1/2$. 
Since
$
    \lVert{}E\rVert_{L^q}
    =
    2\Gamma(q+1)^{1/q},
$
we obtain
\begin{equation*}
    \lVert{}X\rVert_{L^q}
    \leq
    2\log D+\lVert{}E\rVert_{L^q}
    =
    2\left(
        \log D+\Gamma(q+1)^{1/q}
    \right).
\end{equation*}
The estimate~\eqref{eq:centered exponential tilt estimate} now follows from
$\lVert{}X-\mathbb EX\rVert_{L^q}\leq2\lVert{}X\rVert_{L^q}$.
\end{proof}
We now apply this one-dimensional estimate to the projected depth $U_\theta(y)$. The posterior contains one additional radial Gaussian weight.
On bounded support, however, the
oscillation of this additional weight changes the doubling constant by only
a controlled factor.

\begin{theorem}[Normal-direction fluctuation bound]
\label{thm:sec-3-normal-fluctuation-two-scale}
Suppose that $P_0$ is supported in $B_R(0)$ and satisfies
Assumption~\ref{ass:uniform-edge-doubling}. Define
\begin{equation*}
    \Delta_0
    :=
    \sup_{y\in M}|y|^2-
    \inf_{y\in M}|y|^2
    \leq R^2.
\end{equation*}
Then, for every $q\geq1$, $t>0$, and 
$x\in\mathbb R^d$,
\begin{equation}\label{eq:normal fluctuation sharper}
\begin{aligned}
    &\left\lVert
        \langle x,Y-\bar y_{x,t}\rangle
    \right\rVert_{L^q(\mu_{x,t})}\leq
    4e^{-t/2}
    \left[
        (e^t-1)
        \left(
            \log D_{\mathrm{UD}}
            +\Gamma(q+1)^{1/q}
        \right)
        +\frac{\Delta_0}{2}
    \right].
\end{aligned}
\end{equation}
In particular, there exists a constant $C_q=C(q,D_{\mathrm{UD}})$ such that,
\begin{equation}\label{eq:normal fluctuation}
    \mathbb E_{\mu_{x,t}}
    \left[
        |\langle x,Y-\bar y_{x,t}\rangle|^q
    \right]
    \leq
    C_q
    \left(
        e^{qt/2}
        +
        e^{-qt/2}R^{2q}
    \right).
\end{equation}
\end{theorem}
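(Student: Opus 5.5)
The plan is to reduce to Lemma~\ref{lem:centered-exponential-tilt-doubling} by projecting the posterior onto the direction of $x$. Write $\alpha=e^{-t/2}$ and $\beta^2=1-e^{-t}$. If $x=0$ the left-hand side of \eqref{eq:normal fluctuation sharper} vanishes, so assume $x\neq0$ and set $\theta:=x/|x|\in\mathbb S^{d-1}$. Expanding the square in the Gaussian kernel and cancelling the $y$-independent factor $e^{-|x|^2/(2\beta^2)}$ gives
\[
\mu_{x,t}(dy)\ \propto\ \exp\Bigl(\tfrac{\alpha}{\beta^2}\langle x,y\rangle\Bigr)\,w(y)\,dP_0(y),
\qquad
w(y):=\exp\Bigl(-\tfrac{\alpha^2|y|^2}{2\beta^2}\Bigr).
\]
Since $\langle x,y\rangle=|x|\bigl(\sup_{M}\langle\theta,\cdot\rangle-U_\theta(y)\bigr)$, the constant part cancels on normalizing. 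Thus $\mu_{x,t}$ is the tilt of $w\,dP_0$ by $e^{-\eta U_\theta(y)}$ with
\[
\eta:=\frac{\alpha|x|}{\beta^2}.
\]

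Let $\nu$ be the pushforward of $w\,dP_0$ under $U_\theta$. It is a finite nonzero measure supported on $[0,2R]$, and the law of $U_\theta(Y)$ under $\mu_{x,t}$ is exactly $\Pi_\eta$ built from $\nu$. The key step is to transfer the doubling property from $F_\theta$ to $F_\nu$ despite the extra radial weight. On $M$ the weight satisfies
\[
e^{-\alpha^2\sup_M|y|^2/(2\beta^2)}\ \le\ w\ \le\ e^{-\alpha^2\inf_M|y|^2/(2\beta^2)},
\]
so its oscillation ratio is at most $e^{\alpha^2\Delta_0/(2\beta^2)}$. Bounding $w$ above on the larger cap and below on the smaller cap, Assumption~\ref{ass:uniform-edge-doubling} gives
\[
F_\nu(2u)\le D\,F_\nu(u),\qquad D:=D_{\mathrm{UD}}\,e^{\alpha^2\Delta_0/(2\beta^2)},
\]
and therefore $\log D=\log D_{\mathrm{UD}}+\alpha^2\Delta_0/(2\beta^2)$. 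I expect this transfer to be the only delicate point. It is where the boundedness of the support enters, and it produces the $\Delta_0$ term.

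Next, since $\langle x,Y-\bar y_{x,t}\rangle=-|x|\bigl(U-\mathbb E U\bigr)=-\frac{\beta^2}{\alpha}\,\eta\bigl(U-\mathbb E U\bigr)$, Lemma~\ref{lem:centered-exponential-tilt-doubling} yields
\[
\bigl\lVert\langle x,Y-\bar y_{x,t}\rangle\bigr\rVert_{L^q(\mu_{x,t})}
\le \frac{4\beta^2}{\alpha}\Bigl(\log D_{\mathrm{UD}}+\Gamma(q+1)^{1/q}\Bigr)+\frac{4\beta^2}{\alpha}\cdot\frac{\alpha^2\Delta_0}{2\beta^2}.
\]
Using $\beta^2/\alpha=e^{-t/2}(e^t-1)$ and $\frac{\beta^2}{\alpha}\cdot\frac{\alpha^2\Delta_0}{2\beta^2}=e^{-t/2}\Delta_0/2$, this is exactly \eqref{eq:normal fluctuation sharper}.

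Finally, \eqref{eq:normal fluctuation} follows from $e^{-t/2}(e^t-1)\le e^{t/2}$ and $\Delta_0\le R^2$, which bound the $L^q$ norm by $C\bigl(e^{t/2}+e^{-t/2}R^2\bigr)$. Raising to the power $q$ and using $(a+b)^q\le 2^{q-1}(a^q+b^q)$ completes the argument, with $C_q$ depending only on $q$ and $D_{\mathrm{UD}}$.
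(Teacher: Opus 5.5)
Your proposal is correct and follows the paper's proof essentially step for step. You project onto $\theta=x/|x|$ and write $\mu_{x,t}$ as the tilt of $e^{-|y|^2/(2(e^t-1))}P_0(dy)$ by $e^{-\eta U_\theta}$ with $\eta=e^{t/2}|x|/(e^t-1)$. You then transfer the doubling constant to $D_{\mathrm{UD}}e^{\Delta_0/(2(e^t-1))}$ via the oscillation of the Gaussian weight, apply Lemma~\ref{lem:centered-exponential-tilt-doubling}, and rescale using $|x|=(e^t-1)e^{-t/2}\eta$ — exactly as in the paper.
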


\begin{proof}
The assertion is immediate when $x=0$. Suppose that $x\neq0$, and write
\begin{equation*}
    x=r\theta,
    \qquad
    r:=|x|,
    \qquad
    \theta:=\frac{x}{|x|},
    \qquad
    s_t:=e^t-1, \qquad
    \eta_{x,t}=\frac{e^{t/2}r}{s_t}.
\end{equation*}
Expanding the square in the posterior density gives
\begin{equation*}
\begin{aligned}
    \mu_{x,t}(dy)
\propto
    e^{-\eta_{x,t}U_\theta(y)}
    e^{-|y|^2/(2s_t)}P_0(dy).
\end{aligned}
\end{equation*}
Let $\widetilde\nu_{\theta,t}$ be the pushforward under $U_\theta$ of the
finite measure
$
    e^{-|y|^2/(2s_t)}P_0(dy),
$
and denote its distribution function by
\begin{equation*}
    \widetilde F_{\theta,t}(u)
    :=
    \int_{\{U_\theta(y)\leq u\}}
    e^{-|y|^2/(2s_t)}P_0(dy).
\end{equation*}
The oscillation of the Gaussian weight on $M$ gives
\begin{equation*}
    e^{-\sup_M|y|^2/(2s_t)}F_\theta(u)
    \leq
    \widetilde F_{\theta,t}(u)
    \leq
    e^{-\inf_M|y|^2/(2s_t)}F_\theta(u).
\end{equation*}
Therefore Assumption~\ref{ass:uniform-edge-doubling} implies
\begin{equation}
    \widetilde F_{\theta,t}(2u)
    \leq
    D_{\mathrm{UD}}
    \exp\left(
        \frac{\Delta_0}{2s_t}
    \right)
    \widetilde F_{\theta,t}(u).
\end{equation}
Under $\mu_{x,t}$, the random variable $U_\theta(Y)$ is precisely the
$\eta_{x,t}$-exponential tilt of $\widetilde\nu_{\theta,t}$. Applying
Lemma~\ref{lem:centered-exponential-tilt-doubling} therefore gives
\begin{equation*}
\begin{aligned}
    &\left\lVert
        \eta_{x,t}
        \left(
            U_\theta(Y)
            -\mathbb E_{\mu_{x,t}}U_\theta(Y)
        \right)
    \right\rVert_{L^q(\mu_{x,t})}
\leq
    4\left[
        \log D_{\mathrm{UD}}
        +\frac{\Delta_0}{2s_t}
        +\Gamma(q+1)^{1/q}
    \right].
\end{aligned}
\end{equation*}
Finally,
\begin{equation*}
\begin{aligned}
    \langle x,Y-\bar y_{x,t}\rangle
    =
    -r
    \left(
        U_\theta(Y)
        -
        \mathbb E_{\mu_{x,t}}U_\theta(Y)
    \right),\qquad
    r
    =
    s_te^{-t/2}\eta_{x,t}.
\end{aligned}
\end{equation*}
This proves \eqref{eq:normal fluctuation sharper}. Since
$s_te^{-t/2}=e^{t/2}-e^{-t/2}\leq e^{t/2}$ and
$\Delta_0\leq R^2$, \eqref{eq:normal fluctuation} follows.
\end{proof}
The key feature of \eqref{eq:normal fluctuation} is its uniformity in $x$. It allows the explicit ambient-variable insertion $\langle Z,x\rangle$
to be estimated in
the same way as the other centered posterior factors in the normal form
constructed in the next subsection.

\paragraph{Necessity of the doubling condition.}
Assumption~\ref{ass:uniform-edge-doubling} is not merely a sufficient
condition: a uniform bound on the rescaled variance also implies a
doubling condition. We first prove this for one-dimensional exponential
tilts and then transfer it to $P_0$.
\begin{lemma}[Doubling from a uniform rescaled variance bound]
\label{lem:variance-implies-doubling}
Let $\nu$ be a finite nonzero measure supported on $[0,L]$,
with $0\in\operatorname{supp}\nu$. For $\eta>0$, let
$\Pi_\eta(du)\propto e^{-\eta u}\nu(du)$ and let $U$ have
law $\Pi_\eta$. Suppose that
\begin{equation*}
    A_2
    :=
    \sup_{\eta>0}
    \eta^2\operatorname{Var}_{\Pi_\eta}(U)
    <\infty.
\end{equation*}
Then $F_\nu(u):=\nu([0,u])$ satisfies
\begin{equation}\label{eq:variance implies doubling}
    F_\nu(2u)
    \leq
    2e^{4A_2}F_\nu(u),
    \qquad u>0.
\end{equation}
If $A_2=0$, the doubling constant can be taken to be one.
\end{lemma}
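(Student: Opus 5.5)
The plan is to work with the log-partition function and turn the variance bound into a bound on the posterior mean, then into a doubling bound through a Markov step.

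\textbf{Step 1: setup.} Set $Z_\nu(\eta):=\int e^{-\eta u}\nu(du)$, $\psi(\eta):=\log Z_\nu(\eta)$ and $m(\eta):=\mathbb E_{\Pi_\eta}U$. Since $\nu$ is finite and supported on the bounded interval $[0,L]$, differentiation under the integral is justified. It gives $\psi'(\eta)=-m(\eta)$ and $m'(\eta)=-\operatorname{Var}_{\Pi_\eta}(U)$. The hypothesis therefore reads
\[
0\le -m'(\eta)\le A_2\eta^{-2}\qquad\text{for all }\eta>0.
\]

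\textbf{Step 2: the mean tends to zero.} I will show $m(\eta)\to0$ as $\eta\to\infty$, using $0\in\operatorname{supp}\nu$. Fix $\varepsilon>0$. Then $\nu([0,\varepsilon/2])>0$, and
\[
\Pi_\eta(U>\varepsilon)\le \frac{\nu((\varepsilon,L])\,e^{-\eta\varepsilon}}{\nu([0,\varepsilon/2])\,e^{-\eta\varepsilon/2}}\longrightarrow0 .
\]
Since $0\le U\le L$, this yields $\limsup_{\eta\to\infty}m(\eta)\le\varepsilon$. Integrating the derivative bound from $\eta$ to $\infty$ then gives
\[
m(\eta)=\int_\eta^\infty\operatorname{Var}_{\Pi_s}(U)\,ds\le \frac{A_2}{\eta}.
\]

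\textbf{Step 3: the case $A_2>0$.} Fix $u>0$ and choose $\eta:=2A_2/u$, so that $m(\eta)\le u/2$. Markov's inequality gives $\Pi_\eta(U>u)\le m(\eta)/u\le 1/2$, hence $\Pi_\eta(U\le u)\ge1/2$.

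For the lower bound, since $e^{-\eta v}\le1$,
\[
F_\nu(u)\ge\int_{[0,u]}e^{-\eta v}\nu(dv)=Z_\nu(\eta)\,\Pi_\eta(U\le u)\ge \tfrac12 Z_\nu(\eta).
\]
For the upper bound, since $e^{-\eta v}\ge e^{-2\eta u}$ on $[0,2u]$,
\[
F_\nu(2u)\le e^{2\eta u}\int_{[0,2u]}e^{-\eta v}\nu(dv)\le e^{2\eta u}Z_\nu(\eta).
\]
Combining the two bounds gives $F_\nu(2u)\le 2e^{2\eta u}F_\nu(u)=2e^{4A_2}F_\nu(u)$, which is \eqref{eq:variance implies doubling}.

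\textbf{Step 4: the case $A_2=0$.} Here $\operatorname{Var}_{\Pi_\eta}(U)=0$, so each $\Pi_\eta$ is a Dirac mass. Because $\Pi_\eta$ and $\nu$ are mutually absolutely continuous, $\nu$ itself is a point mass. As $0\in\operatorname{supp}\nu$, that point is $0$. Then $F_\nu$ is constant on $(0,\infty)$, and the doubling holds with constant one.

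\textbf{Main obstacle.} The delicate point is Step 2, the justification that $m(\eta)\to0$. This is exactly where the hypothesis $0\in\operatorname{supp}\nu$ is used. Without it, $m$ would tend to $\inf\operatorname{supp}\nu>0$, and the integrated bound $m(\eta)\le A_2/\eta$ would fail.
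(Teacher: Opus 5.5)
Your proposal is correct and follows the same route as the paper. Both arguments use $m'=-\operatorname{Var}$, then $m(\eta)\to0$ from $0\in\operatorname{supp}\nu$, the integrated bound $m(\eta)\le A_2/\eta$, the choice $\eta=2A_2/u$ with Markov's inequality, and the two-sided comparison of $F_\nu(u)$ and $F_\nu(2u)$ with $Z_\nu(\eta)$. Your explicit proof that $m(\eta)\to0$, and your treatment of the $A_2=0$ case via mutual absolute continuity of $\Pi_\eta$ and $\nu$, fill in details the paper asserts without proof.
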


\begin{proof}
If $A_2=0$, then $\nu$ is concentrated at zero, and the
conclusion is immediate. Assume that $A_2>0$.
Writing $m(\eta):=\mathbb E_{\Pi_\eta}U$, differentiation gives
\begin{equation*}
    m'(\eta)
    =
    -\operatorname{Var}_{\Pi_\eta}(U).
\end{equation*}
Since $0\in\operatorname{supp}\nu$, one has
$m(\eta)\to0$ as $\eta\to\infty$. Hence
\begin{equation*}
    m(\eta)
    =
    \int_\eta^\infty
    \operatorname{Var}_{\Pi_s}(U)\,ds
    \leq
    \frac{A_2}{\eta}.
\end{equation*}
For $u>0$, choose $\eta=2A_2/u$. Markov's inequality yields
$\Pi_\eta(U>u)\leq m(\eta)/u\leq1/2$. Therefore, with
$Z_\nu(\eta):=\int e^{-\eta v}\nu(dv)$,
\begin{equation*}
\begin{aligned}
    F_\nu(u)
    &\geq
    \int_{[0,u]}e^{-\eta v}\nu(dv)
    \geq
    \frac12 Z_\nu(\eta),
    \\
    F_\nu(2u)
    &\leq
    e^{2\eta u}
    \int_{[0,2u]}e^{-\eta v}\nu(dv)
    \leq
    e^{4A_2}Z_\nu(\eta).
\end{aligned}
\end{equation*}
Combining these inequalities proves
\eqref{eq:variance implies doubling}.
\end{proof}

\begin{corollary}[Doubling is necessary for uniform normal fluctuations]
\label{cor:doubling-necessary}
Suppose that $P_0$ is supported in $B_R(0)$ and that, for some fixed
$t>0$,
\begin{equation*}
    C_t
    :=
    \sup_{x\in\mathbb R^d}
    \left\lVert
        \langle x,Y-\bar y_{x,t}\rangle
    \right\rVert_{L^2(\mu_{x,t})}
    <\infty.
\end{equation*}
Then Assumption~\ref{ass:uniform-edge-doubling} holds with
\begin{equation*}
    D_{\mathrm{UD}}
    =
    2\exp\left(
        \frac{4e^t C_t^2}{(e^t-1)^2}
        +
        \frac{\Delta_0}{2(e^t-1)}
    \right),
\end{equation*}
where $\Delta_0\le R^2$ is as in
Theorem~\ref{thm:sec-3-normal-fluctuation-two-scale}. Consequently,
for compactly supported targets, the doubling condition is necessary
and sufficient for a spatially uniform $L^2$ normal-fluctuation bound
at any fixed positive time. For families of targets with uniformly
bounded support radii and fluctuation constants $C_t$, the equivalence
preserves dimension-uniformity.
\end{corollary}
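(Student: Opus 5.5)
We plan to reverse the reduction used in the proof of Theorem~\ref{thm:sec-3-normal-fluctuation-two-scale} and then invoke Lemma~\ref{lem:variance-implies-doubling}. Fix the time $t>0$ and a direction $\theta\in\mathbb S^{d-1}$, and set $s_t:=e^t-1$. For $x=r\theta$ with $r>0$, the same completion of the square as before shows that the law of $U_\theta(Y)$ under $\mu_{x,t}$ is the exponential tilt $\Pi_\eta$ of the finite measure $\widetilde\nu_{\theta,t}$, the pushforward under $U_\theta$ of $e^{-|y|^2/(2s_t)}P_0(dy)$. The tilt parameter is $\eta=e^{t/2}r/s_t$. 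As $r$ ranges over $(0,\infty)$, $\eta$ ranges over all of $(0,\infty)$.

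Next we translate the hypothesis into the quantity $A_2$ of Lemma~\ref{lem:variance-implies-doubling}. Using $\langle x,Y-\bar y_{x,t}\rangle=-r\,(U_\theta(Y)-\mathbb E U_\theta(Y))$ and $r=s_te^{-t/2}\eta$, we get
\[
\eta^2\operatorname{Var}_{\Pi_\eta}(U)=\frac{e^{t}}{s_t^2}\,\bigl\lVert\langle x,Y-\bar y_{x,t}\rangle\bigr\rVert_{L^2(\mu_{x,t})}^2\le \frac{e^tC_t^2}{(e^t-1)^2}.
\]
Hence $A_2\le e^tC_t^2/(e^t-1)^2$, uniformly in $\theta$ and in $d$.

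We then check the remaining hypotheses of the lemma. The measure $\widetilde\nu_{\theta,t}$ is finite, nonzero, and supported in $[0,2R]$. The condition $0\in\operatorname{supp}\widetilde\nu_{\theta,t}$ holds because $F_\theta(u)>0$ for every $u>0$ (as noted after the definition of $F_\theta$), and the Gaussian weight is bounded below on $M$. The lemma therefore gives $\widetilde F_{\theta,t}(2u)\le 2e^{4A_2}\widetilde F_{\theta,t}(u)$.

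To transfer this back to $F_\theta$, we use the sandwich $e^{-\sup_M|y|^2/(2s_t)}F_\theta\le\widetilde F_{\theta,t}\le e^{-\inf_M|y|^2/(2s_t)}F_\theta$ from the previous proof. It yields
\[
F_\theta(2u)\le e^{\Delta_0/(2s_t)}\cdot 2e^{4A_2}F_\theta(u),
\]
which is exactly the stated constant $D_{\mathrm{UD}}$. This constant is independent of $\theta$ and $u$, and it depends on $d$ only through $C_t$ and $R$, which gives the dimension-uniformity claim. The ``necessary and sufficient'' statement then follows by combining this with \eqref{eq:normal fluctuation sharper} for $q=2$, whose right-hand side is uniform in $x$.

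The main obstacle is the support condition $0\in\operatorname{supp}\widetilde\nu_{\theta,t}$. It must be argued from the definition of $U_\theta$ as the gap to $\sup_M\langle\theta,\cdot\rangle$, together with the closedness of $M$: the supremum is attained, so every cap $\{U_\theta\le u\}$ has positive $P_0$-mass. The remaining care is in the bookkeeping of the factor $r=s_te^{-t/2}\eta$, to obtain the precise constant.
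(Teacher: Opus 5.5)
Your proposal is correct and follows the paper's proof essentially step for step. You identify the posterior law of $U_\theta(Y)$ as the $\eta$-tilt of $\widetilde\nu_{\theta,t}$, convert the hypothesis into $A_2\le e^tC_t^2/(e^t-1)^2$ via $r=s_te^{-t/2}\eta$, apply Lemma~\ref{lem:variance-implies-doubling}, and transfer back through the Gaussian-weight sandwich to pick up the factor $e^{\Delta_0/(2s_t)}$; your explicit check that $0\in\operatorname{supp}\widetilde\nu_{\theta,t}$ (from $\widetilde F_{\theta,t}(u)\ge e^{-\sup_M|y|^2/(2s_t)}F_\theta(u)>0$) supplies a hypothesis of the lemma that the paper uses without comment.
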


\begin{proof}
Recall $s_t=e^t-1$ and $\eta_{x,t}=e^{t/2}r/s_t$ for $x=r\theta$.
For the weighted projected measure $\widetilde\nu_{\theta,t}$ used in
the proof of Theorem~\ref{thm:sec-3-normal-fluctuation-two-scale},
\begin{equation*}
\begin{aligned}
    \eta_{x,t}^2
    \operatorname{Var}_{\mu_{x,t}}\bigl(U_\theta(Y)\bigr)
    =
    \frac{e^t}{s_t^2}
    \left\lVert
        \langle x,Y-\bar y_{x,t}\rangle
    \right\rVert_{L^2(\mu_{x,t})}^2&\leq
    \frac{e^t C_t^2}{s_t^2}.
\end{aligned}
\end{equation*}
As $r$ ranges over $(0,\infty)$, $\eta_{x,t}$ ranges over all
positive tilt parameters. Applying
Lemma~\ref{lem:variance-implies-doubling} and transferring the
resulting doubling bound through the Gaussian weight gives
\begin{equation*}
    F_\theta(2u)
    \leq
    2\exp\left(
        \frac{4e^t C_t^2}{(e^t-1)^2}
        +
        \frac{\Delta_0}{2(e^t-1)}
    \right)
    F_\theta(u),
    \qquad
    \theta\in\mathbb S^{d-1},\quad u>0,
\end{equation*}
which is the claimed constant. The equivalence follows by combining
this with Theorem~\ref{thm:sec-3-normal-fluctuation-two-scale}, whose
constants depend only on $q$, $R$, $t$, and $D_{\mathrm{UD}}$.
\end{proof}

\subsection{Algebraic normal form and closure}
\label{subsec:sec3.2-algebraic-normal-form-closure}

We now organize the terms generated by repeated applications of $L$ into
a class built from centered posterior moments. We first define this class
and then prove that it is preserved by the time and spatial derivatives
appearing in $L$. At this algebraic stage, we allow arbitrary smooth
time-dependent coefficients in
$
\mathcal C:=C^\infty((0,\infty)).
$
The resulting expressions will be estimated in the next subsection using
the normal-direction fluctuation bound.

\paragraph{Centered atoms.}
We construct the admissible vector insertions recursively, starting from
the posterior mean $\bar y_{x,t}$. Given previously constructed admissible
insertions $U_1,\ldots,U_\ell$, define the vector and scalar atoms
\begin{equation*}
\begin{aligned}
\mathcal M_{k,m}[U_1,\ldots,U_\ell]
&:=
\mathbb E_{\mu_{x,t}}
\left[
    Z
    \langle Z,x\rangle^k
    |Z|^m
    \prod_{i=1}^{\ell}
    \langle Z,U_i\rangle
\right],
\\
\mathcal S_{k,m}[U_1,\ldots,U_\ell]
&:=
\mathbb E_{\mu_{x,t}}
\left[
    \langle Z,x\rangle^k
    |Z|^m
    \prod_{i=1}^{\ell}
    \langle Z,U_i\rangle
\right].
\end{aligned}
\end{equation*}
where $k,\ell\in\mathbb N_0$ and $m\in2\mathbb N_0$.
Each insertion $U_i=U_i(t,x)$ is held fixed in the outer posterior
expectation, even when it is itself defined by a posterior expectation.
When $\ell=0$, the product is interpreted as $1$, and we write
$[\varnothing]$ for the empty insertion list.
Every vector atom is itself an admissible vector insertion, and only
expressions obtained after finitely many recursive steps are allowed.

To record this recursion, we denote the \emph{insertion depth} of the vector and scalar atoms. More precisely, we set
$\operatorname{dep}(\bar y_{x,t})=0$ and
\begin{equation*}
\operatorname{dep}
\left(
    \mathcal M_{k,m}[U_1,\ldots,U_\ell]
\right)
=
\operatorname{dep}
\left(
    \mathcal S_{k,m}[U_1,\ldots,U_\ell]
\right)
:=
1+\max_{1\leq i\leq\ell}\operatorname{dep}(U_i),
\end{equation*}
where the maximum over the empty set is $-1$. Thus an atom without inserted
vectors has depth zero. The restriction $m\in2\mathbb N_0$ guarantees that
differentiating $|Z|^m$ produces only nonnegative smooth powers of $|Z|$.

Let $\mathcal A$ be the commutative $\mathcal C$-algebra generated by the
scalar atoms, and let $\mathfrak M$ be the $\mathcal A$-module generated by
the vector atoms. Their elements are finite sums of expressions of the
forms
\begin{equation*}
    c(t)\prod_{\nu=1}^N\mathcal S_\nu,
    \qquad
    c(t)
    \left(
        \prod_{\nu=1}^N\mathcal S_\nu
    \right)
    \mathcal M,
    \qquad
    c\in\mathcal C,
\end{equation*}
respectively, with the empty scalar product interpreted as $1$.
We call $\mathfrak M$ the centered posterior moment normal-form class.
The fields $x$ and $\bar y_{x,t}$ do not in general belong to
$\mathfrak M$, but both already appear in
$Lx=\frac12(a(t)x+\lambda(t)\bar y_{x,t})$. We therefore introduce the
extended class
\begin{equation*}
    \mathcal G
    :=
    \mathcal Cx
    +
    \mathcal C\bar y_{x,t}
    +
    \mathfrak M.
\end{equation*}

\begin{remark}
The normal-form representation need not be unique. For example,
$\mathcal S_{0,0}[\varnothing]=1$ and
$\mathcal M_{0,0}[\varnothing]=0$, and the atoms are invariant under
permutations of their inserted vectors. We omit these constant and zero
atoms from the generating sets.
\end{remark}

The key structural point is that every explicit occurrence of the ambient
variable $x$ inside an atom appears through the centered insertion
$\langle Z,x\rangle$. These are precisely the factors controlled by the
normal-direction estimate in
Theorem~\ref{thm:sec-3-normal-fluctuation-two-scale}. The next step is to verify that this recursive grammar is preserved by the two operations entering the material derivative $L$: time differentiation and spatial differentiation along an admissible vector field.

\paragraph{Posterior differentiation identities.}
Let $v=v(t,x)$ be a vector field, held fixed in the posterior integral.
For a sufficiently regular scalar- or vector-valued function
$F=F(t,x,Y)$, differentiating the normalized posterior expectation gives
\begin{equation*}
D_x\mathbb E_{\mu_{x,t}}[F][v]
=
\mathbb E_{\mu_{x,t}}[D_xF[v]]
+
\lambda(t)
\mathbb E_{\mu_{x,t}}
\left[
F\langle Z,v\rangle
\right].
\end{equation*}
The first term differentiates the integrand at fixed $Y$, while the second
accounts for the change in the posterior distribution.

Taking $F=Y$ expresses the derivative of the posterior mean through the
posterior covariance:
\begin{equation*}
\mathsf K_{x,t}
:=
D_x\bar y_{x,t}
=
\lambda(t)
\mathbb E_{\mu_{x,t}}
\left[
Z\otimes Z
\right].
\end{equation*}
Consequently,
\begin{equation*}
D_xZ[v]=-\mathsf K_{x,t}v,
\qquad
\mathsf K_{x,t}v
=
\lambda(t)
\mathbb E_{\mu_{x,t}}
\left[
Z\langle Z,v\rangle
\right].
\end{equation*}
We now apply the differentiation identity to a vector atom. Write
\begin{equation*}
    Q
    :=
    \langle Z,x\rangle^k
    |Z|^m
    \prod_{i=1}^{\ell}
    \langle Z,U_i\rangle,
    \qquad
    F:=ZQ.
\end{equation*}
For the product-rule expansion, introduce the abbreviations
\begin{equation*}
\begin{aligned}
    Q_x
    &:=
    \langle Z,x\rangle^{k-1}
    |Z|^m
    \prod_{i=1}^{\ell}
    \langle Z,U_i\rangle,
    && k\geq1,
    \\
    Q_r
    &:=
    \langle Z,x\rangle^k
    |Z|^{m-2}
    \prod_{i=1}^{\ell}
    \langle Z,U_i\rangle,
    && m\geq2,
    \\
    Q_j
    &:=
    \langle Z,x\rangle^k
    |Z|^m
    \prod_{i\neq j}
    \langle Z,U_i\rangle,
    && 1\leq j\leq\ell.
\end{aligned}
\end{equation*}

The spatial derivative decomposes according to the factor that is
differentiated:
\begin{subequations}\label{eq:D_x E[F] form}
\begin{align}
D_x\mathbb E_{\mu_{x,t}}[F][v]
&=
\lambda(t)
\mathbb E_{\mu_{x,t}}
\left[
    F\langle Z,v\rangle
\right],
\label{eq:DxEF-posterior}
\\
&\quad
-
\mathcal S_{k,m}[U_1,\ldots,U_\ell]
\mathsf K_{x,t}v,
\label{eq:DxEF-leading-Z}
\\
&\quad
+
k\,
\mathbb E_{\mu_{x,t}}
\left[
    ZQ_x\langle Z,v\rangle
\right],
\label{eq:DxEF-direct-x}
\\
&\quad
-
k\,
\langle\mathsf K_{x,t}v,x\rangle
\mathbb E_{\mu_{x,t}}
\left[
    ZQ_x
\right],
\label{eq:DxEF-center-x}
\\
&\quad
-
m\,
\mathbb E_{\mu_{x,t}}
\left[
    ZQ_r
    \langle Z,\mathsf K_{x,t}v\rangle
\right],
\label{eq:DxEF-radius}
\\
&\quad
+
\sum_{j=1}^{\ell}
\mathbb E_{\mu_{x,t}}
\left[
    ZQ_j
    \langle Z,D_xU_j[v]\rangle
\right],
\label{eq:DxEF-recursive-insertion}
\\
&\quad
-
\sum_{j=1}^{\ell}
\langle\mathsf K_{x,t}v,U_j\rangle
\mathbb E_{\mu_{x,t}}
\left[
    ZQ_j
\right].
\label{eq:DxEF-center-insertion}
\end{align}
\end{subequations}
The terms multiplied by $k$ or $m$ are absent when $k=0$ or $m=0$,
respectively. All terms involving $\mathsf K_{x,t}v$ come from
differentiating the centering $Z=Y-\bar y_{x,t}$. The  differentiation of the
posterior weight gives~\eqref{eq:DxEF-posterior}, the direct derivative of $x$ gives \eqref{eq:DxEF-direct-x}, while the
derivatives of the inserted fields give
\eqref{eq:DxEF-recursive-insertion}.

For a scalar atom, the same identity holds with the leading factor $Z$
deleted throughout and with
\eqref{eq:DxEF-leading-Z} omitted.

For time differentiation, we denote the centered time score of the posterior by
\begin{equation*}
\widehat\Theta_{x,t}(Y)
:=
\partial_t
\left(
-\frac{|x-\alpha(t)Y|^2}{2\beta(t)^2}
\right)
-
\mathbb E_{\mu_{x,t}}
\left[
\partial_t
\left(
-\frac{|x-\alpha(t)Y|^2}{2\beta(t)^2}
\right)
\right].
\end{equation*}
The subtracted term accounts for the derivative of the posterior
normalizing constant. In particular,
$\mathbb E_{\mu_{x,t}}[\widehat\Theta_{x,t}]=0$, and for every sufficiently
regular integrand $G=G(t,x,Y)$,
\begin{equation*}
\partial_t\mathbb E_{\mu_{x,t}}[G]
=
\mathbb E_{\mu_{x,t}}[\partial_tG]
+
\mathbb E_{\mu_{x,t}}
\left[
G\widehat\Theta_{x,t}
\right].
\end{equation*}
For the OU coefficients, direct differentiation gives
\begin{equation}\label{eq:centered-time-score}
\begin{aligned}
\widehat\Theta_{x,t}
={}&
-c_x(t)\langle Z,x\rangle
+
c_0(t)
\left(
|Z|^2-\mathcal S_{0,2}[\varnothing]
+2\langle Z,\bar y_{x,t}\rangle
\right),
\end{aligned}
\end{equation}
where
$c_x(t):=\lambda(t)/2+e^{-t/2}\lambda(t)^2$ and
$c_0(t):=\lambda(t)^2/2$. When multiplied by an atom integrand and averaged under $\mu_{x,t}$,
this expression produces only atoms and products of atoms.

Taking $G=Y$ and using
$\mathbb E_{\mu_{x,t}}[\widehat\Theta_{x,t}]=0$, we obtain
\begin{equation}\label{eq:explicit expansion of Nt}
\begin{aligned}
N_{x,t}
:=
\partial_t\bar y_{x,t}
&=
\mathbb E_{\mu_{x,t}}
\left[
    Z\widehat\Theta_{x,t}
\right]
\\
&=
-c_x(t)\mathcal M_{1,0}[\varnothing]
+
c_0(t)\mathcal M_{0,2}[\varnothing]
+
2c_0(t)\mathcal M_{0,0}[\bar y_{x,t}].
\end{aligned}
\end{equation}
Thus $\partial_tZ=-N_{x,t}$, with $N_{x,t}\in\mathfrak M$. For $F=ZQ$, time differentiation acts on the posterior weight, the
centering, and the inserted vectors. Using the same abbreviations
$Q_x,Q_r,Q_j$ as above gives
\begin{subequations}\label{eq:D_t E[F] form}
\begin{align}
\partial_t\mathbb E_{\mu_{x,t}}[F]
&=
\mathbb E_{\mu_{x,t}}
\left[
    F\widehat\Theta_{x,t}
\right],
\label{eq:DtEF-posterior}
\\
&\quad
-
N_{x,t}
\mathcal S_{k,m}[U_1,\ldots,U_\ell],
\label{eq:DtEF-leading-Z}
\\
&\quad
-
k\,
\langle N_{x,t},x\rangle
\mathbb E_{\mu_{x,t}}
\left[
    ZQ_x
\right],
\label{eq:DtEF-center-x}
\\
&\quad
-
m\,
\mathbb E_{\mu_{x,t}}
\left[
    ZQ_r
    \langle Z,N_{x,t}\rangle
\right],
\label{eq:DtEF-radius}
\\
&\quad
+
\sum_{j=1}^{\ell}
\mathbb E_{\mu_{x,t}}
\left[
    ZQ_j
    \langle Z,\partial_tU_j\rangle
\right],
\label{eq:DtEF-recursive-insertion}
\\
&\quad
-
\sum_{j=1}^{\ell}
\langle N_{x,t},U_j\rangle
\mathbb E_{\mu_{x,t}}
\left[
    ZQ_j
\right].
\label{eq:DtEF-center-insertion}
\end{align}
\end{subequations}
As in the spatial formula, the terms involving $Q_x$ or $Q_r$ are omitted
when $k=0$ or $m=0$, respectively. For a scalar atom,
the same formula holds with the leading $Z$ deleted and with
\eqref{eq:DtEF-leading-Z} omitted.

The expansions~\eqref{eq:D_x E[F] form} and~\eqref{eq:D_t E[F] form} make the recursive structure explicit. In
\eqref{eq:D_x E[F] form}, the only term containing a spatial derivative of
an inserted vector is
\eqref{eq:DxEF-recursive-insertion}; in
\eqref{eq:D_t E[F] form}, the only corresponding time-recursive term is
\eqref{eq:DtEF-recursive-insertion}. Each such field has smaller insertion depth than the atom containing it and all remaining contributions can be
read directly from already constructed atoms and the two basic vector fields
$\mathsf K_{x,t}v$ and $N_{x,t}$. We use this observation in the following inductive proof of closure.

\begin{proposition}[Algebraic closure]
\label{prop:posterior-differentiation-closure}
For every $v\in\mathcal G$,
\begin{equation*}
\begin{aligned}
D_x\mathcal A[v]
&\subseteq\mathcal A,
&
D_x\mathfrak M[v]
&\subseteq\mathfrak M,
&
D_x\mathcal G[v]
&\subseteq\mathcal G,
\\
\partial_t\mathcal A
&\subseteq\mathcal A,
&
\partial_t\mathfrak M
&\subseteq\mathfrak M,
&
\partial_t\mathcal G
&\subseteq\mathcal G.
\end{aligned}
\end{equation*}
The insertion depth may increase under differentiation, but remains finite.
\end{proposition}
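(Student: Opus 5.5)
The argument is an induction on insertion depth, run simultaneously for spatial and time derivatives. Since $D_x$ and $\partial_t$ are derivations, the Leibniz rule reduces the claims for $\mathcal A$ and $\mathfrak M$ to their generators. Derivatives of the coefficients $c\in\mathcal C$ stay in $\mathcal C$; products of elements of $\mathcal A$ stay in $\mathcal A$; and $\mathcal A\cdot\mathfrak M\subseteq\mathfrak M$. It therefore suffices to show that for every atom $\mathcal S_{k,m}[U_1,\dots,U_\ell]$, $\mathcal M_{k,m}[U_1,\dots,U_\ell]$ and every $v\in\mathcal G$, the derivatives $D_x(\cdot)[v]$ and $\partial_t(\cdot)$ lie in $\mathcal A$ and $\mathfrak M$ respectively. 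The claim for $\mathcal G$ then follows from three observations. First, $D_xx[v]=v\in\mathcal G$ and $\partial_tx=0$. Second, $D_x\bar y_{x,t}[v]=\mathsf K_{x,t}v=\lambda\mathcal M_{0,0}[v]$; this requires $v$ to be an admissible insertion, so I will first check that $x$ and $\bar y_{x,t}$ may be inserted. Third, $\partial_t\bar y_{x,t}=N_{x,t}\in\mathfrak M$ by \eqref{eq:explicit expansion of Nt}.

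A preliminary step is to settle that insertions may range over all of $\mathcal G$. An insertion of $x$ is exactly a raise of $k$, $\langle Z,\bar y\rangle$ is admissible at depth $0$, and an insertion of a module element $c\prod\mathcal S_\nu\,\mathcal M$ factors out as a product of scalars times an atom with $\mathcal M$ inserted, because insertions are held fixed in the outer expectation. By multilinearity in the inserted slots, $\mathcal M_{k,m}[\dots,v,\dots]$ with $v\in\mathcal G$ therefore lies in $\mathcal A\cdot\mathfrak M$ (resp.\ $\mathcal A$). In particular $\mathsf K_{x,t}v\in\mathfrak M$ and $\langle \mathsf K_{x,t}v,w\rangle=\lambda\mathcal S_{0,0}[v,w]\in\mathcal A$ for $v,w\in\mathcal G$, and likewise $\langle N_{x,t},x\rangle$ and $\langle N_{x,t},U_j\rangle$ lie in $\mathcal A$ after expanding $N_{x,t}$.

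The inductive step reads off the seven terms of \eqref{eq:D_x E[F] form}. Term \eqref{eq:DxEF-posterior} is $\lambda\mathcal M_{k,m}[U,v]$. Term \eqref{eq:DxEF-leading-Z} is a scalar atom times $\mathsf K v\in\mathfrak M$. Term \eqref{eq:DxEF-direct-x} is $k\mathcal M_{k-1,m}[U,v]$. Terms \eqref{eq:DxEF-center-x} and \eqref{eq:DxEF-center-insertion} are scalars in $\mathcal A$ times atoms with one slot removed. Term \eqref{eq:DxEF-radius} is $m\mathcal M_{k,m-2}[U,\mathsf K v]$, and it is here that $m\in2\mathbb N_0$ matters. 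The remaining term \eqref{eq:DxEF-recursive-insertion} is $\mathcal M_{k,m}[\dots,D_xU_j[v],\dots]$. Since $U_j$ has strictly smaller depth, the induction hypothesis gives $D_xU_j[v]\in\mathcal G$; this uses the base case $D_x\bar y[v]=\mathsf K v$ at depth $0$. By the preliminary step the atom with this insertion is in $\mathfrak M$. The time derivative in \eqref{eq:D_t E[F] form} is handled the same way. The only new ingredient is \eqref{eq:DtEF-posterior}, which I will expand with \eqref{eq:centered-time-score}: it gives $-c_x\mathcal M_{k+1,m}[U]+c_0(\mathcal M_{k,m+2}[U]-\mathcal S_{0,2}\mathcal M_{k,m}[U]+2\mathcal M_{k,m}[U,\bar y])$, all in $\mathfrak M$. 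Term \eqref{eq:DtEF-recursive-insertion} again uses the induction hypothesis $\partial_tU_j\in\mathcal G$. Scalar atoms are treated identically, with the leading $Z$ dropped.

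The main obstacle is making the induction well-founded. The hypothesis must be formulated as ``for every admissible insertion $U$ of depth $<n$ and every $v\in\mathcal G$, $D_xU[v]\in\mathcal G$ and $\partial_tU\in\mathcal G$.'' The difficulty is that the direction $v$ itself may have arbitrary depth, and it appears inside \eqref{eq:DxEF-posterior} and inside $\mathsf K v$. This causes no circularity, because $v$ is only inserted and never differentiated, so the depth of the output is bounded by $1+\max(\operatorname{dep}U_j,\operatorname{dep}v,\operatorname{dep}D_xU_j[v])$, which is finite. I also need to check that inserting $x$ or $\bar y$ into $\mathcal G$ elements does not break the grammar. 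This is the bookkeeping point where I expect to spend the most care.
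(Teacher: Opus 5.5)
Your proposal is correct and follows essentially the same route as the paper. Both argue by induction on insertion depth and read off the spatial and time expansions term by term, with the direction $v\in\mathcal G$ (and likewise $\mathsf K_{x,t}v$ and $N_{x,t}$) only ever inserted or contracted, never differentiated. Your multilinear extension of the insertion slots to $\mathcal G$ is a packaged form of the paper's reduction to $v=x,\ \bar y_{x,t},\ B\mathcal M$ together with its contraction and insertion rules.

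One harmless slip: the radial term inserts $\mathsf K_{x,t}v=\lambda\mathcal M_{0,0}[v]$, which has depth $1+\operatorname{dep}(v)$. Your explicit depth bound should therefore allow one further level, but finiteness is unaffected.
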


\begin{proof}
For spatial differentiation, the definition of $\mathcal G$ and linearity
in the direction reduce the argument to
$v=x$, $v=\bar y_{x,t}$, and $v=B\mathcal M$, where
$B\in\mathcal A$ and $\mathcal M$ is a vector atom. In these cases,
\begin{equation*}
\begin{aligned}
    \mathsf K_{x,t}x
    &=
    \lambda(t)\mathcal M_{1,0}[\varnothing],
    \\
    \mathsf K_{x,t}\bar y_{x,t}
    &=
    \lambda(t)\mathcal M_{0,0}[\bar y_{x,t}],
    \\
    \mathsf K_{x,t}(B\mathcal M)
    &=
    \lambda(t)B\mathcal M_{0,0}[\mathcal M].
\end{aligned}
\end{equation*}
Together with \eqref{eq:explicit expansion of Nt}, these identities give
$
D_x\bar y_{x,t}[v]
=
\mathsf K_{x,t}v
\in\mathfrak M,
\
\partial_t\bar y_{x,t}
=
N_{x,t}
\in\mathfrak M.
$ This establishes the derivative statements for the terminal insertion
$\bar y_{x,t}$.

We will repeatedly use two immediate consequences of the definitions. If
$V\in\mathfrak M$, then
$\langle V,x\rangle\in\mathcal A$ and
$\langle V,U\rangle\in\mathcal A$ for every admissible insertion $U$.
Moreover, whenever $\langle Z,V\rangle$ appears inside an outer posterior
expectation, expanding $V$ as an $\mathcal A$-linear combination of vector
atoms simply inserts those vector atoms into the outer atom.

Consider first the terms in \eqref{eq:D_x E[F] form} that do not
differentiate an inserted field. The posterior-weight contribution
\eqref{eq:DxEF-posterior} and the direct-$x$ contribution
\eqref{eq:DxEF-direct-x} introduce a factor $\langle Z,v\rangle$ into
the integrand. For the three directions above, this adds a factor
$\langle Z,x\rangle$, inserts $\bar y_{x,t}$, or inserts $\mathcal M$
with the coefficient $B$ taken outside the expectation.
The leading-vector term \eqref{eq:DxEF-leading-Z} belongs to
$\mathfrak M$ because it is the product of a scalar atom and
$\mathsf K_{x,t}v\in\mathfrak M$.
The centering terms \eqref{eq:DxEF-center-x} and
\eqref{eq:DxEF-center-insertion} have coefficients
$\langle\mathsf K_{x,t}v,x\rangle$ and
$\langle\mathsf K_{x,t}v,U_j\rangle$ in $\mathcal A$, respectively.
Finally, the radial term \eqref{eq:DxEF-radius} is handled by inserting
$\mathsf K_{x,t}v\in\mathfrak M$ into the outer atom.
Thus every nonrecursive spatial contribution has the required form.

For time differentiation, the posterior-weight term
\eqref{eq:DtEF-posterior} produces atoms and products of atoms after
expanding $\widehat\Theta_{x,t}$ through
\eqref{eq:centered-time-score}.
The leading-vector term \eqref{eq:DtEF-leading-Z} is an
$\mathcal A$-multiple of $N_{x,t}\in\mathfrak M$.
The contraction rules above handle
\eqref{eq:DtEF-center-x} and \eqref{eq:DtEF-center-insertion}, since
$\langle N_{x,t},x\rangle$ and $\langle N_{x,t},U_j\rangle$ belong to
$\mathcal A$. The radial term \eqref{eq:DtEF-radius} inserts
$N_{x,t}$ into the outer atom and is handled by the insertion rule.
For scalar atoms, the same arguments apply with the leading-vector
contributions \eqref{eq:DxEF-leading-Z} and
\eqref{eq:DtEF-leading-Z} omitted.

We now prove closure for scalar and vector atoms simultaneously by
induction on insertion depth. An atom of depth zero has no inserted
vectors, so the recursive terms
\eqref{eq:DxEF-recursive-insertion} and
\eqref{eq:DtEF-recursive-insertion} are absent, and all its derivative
terms have already been covered.
Suppose the claim holds for atoms of depth less than $r$, and consider
an atom of depth $r\geq1$. Each insertion $U_j$ is either the posterior
mean or a vector atom of smaller depth. The result for the posterior
mean and the induction hypothesis therefore give
\begin{equation*}
    D_xU_j[v]\in\mathfrak M,
    \qquad
    \partial_tU_j\in\mathfrak M.
\end{equation*}
In the spatial recursive term \eqref{eq:DxEF-recursive-insertion},
we apply the insertion rule to $D_xU_j[v]$.
In the time-recursive term \eqref{eq:DtEF-recursive-insertion},
we apply the same rule to $\partial_tU_j$.
Both contributions become finite sums of atoms with coefficients in
$\mathcal A$. This completes the induction.

The product rule extends the atom-level statements to the algebra
$\mathcal A$ and the module $\mathfrak M$, since coefficients in
$\mathcal C$ are independent of $x$ and remain in $\mathcal C$ under
time differentiation.

Finally, $D_x x[v]=v$ and $\partial_t x=0$. Together with the derivatives
of $\bar y_{x,t}$ established above and closure of $\mathfrak M$,
these identities give the assertions for $\mathcal G$.
Every step uses finite sums and finitely many insertions, so the
resulting insertion depth remains finite.
\end{proof}

\begin{lemma}[Algebraic closure under the material derivative $L$]
\label{lem:centered-moment-normal-form-Ljx}
The extended normal-form class is invariant under the material derivative:
$
L\mathcal G\subseteq\mathcal G.
$
Consequently, for every integer $j\geq1$, $L^jx$ admits a representation
\begin{equation}\label{eq:deco for L^j x}
L^jx
=
A_j(t)x
+
\widetilde A_j(t)\bar y_{x,t}
+
R_j(t,x),
\end{equation}
with $A_j,\widetilde A_j\in\mathcal C$ and $R_j\in\mathfrak M$.
\end{lemma}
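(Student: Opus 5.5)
The plan is to reduce invariance under $L$ to Proposition~\ref{prop:posterior-differentiation-closure}, using the fact that $\mathcal G$ is a $\mathcal C$-module. Write
\[
L=-\partial_t+\tfrac12\hat s(t,x)\cdot D_x,\qquad \hat s=a(t)x+\lambda(t)\bar y_{x,t}.
\]
Since $a,\lambda\in\mathcal C$, the velocity $v:=\tfrac12\hat s$ lies in $\mathcal Cx+\mathcal C\bar y_{x,t}\subseteq\mathcal G$. For any $F\in\mathcal G$ we then have
\[
LF=-\partial_tF+D_xF[v].
\]
Proposition~\ref{prop:posterior-differentiation-closure} gives $\partial_tF\in\mathcal G$ and $D_xF[v]\in\mathcal G$ for every admissible direction $v\in\mathcal G$. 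Since $\mathcal G$ is closed under sums, $LF\in\mathcal G$. The only point to verify is that $D_x(\cdot)[v]$ is $\mathcal C$-linear in $v$, which holds because coefficients in $\mathcal C$ do not depend on $x$. This is exactly how the proposition reduces to the generators $x$, $\bar y_{x,t}$, and $B\mathcal M$.

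For completeness, I would also check the generator cases for $F$ directly.
\begin{itemize}
\item If $F=c(t)x$, then $LF=-c'(t)x+c(t)v\in\mathcal G$.
\item If $F=c(t)\bar y_{x,t}$, then
\[
LF=-c'\bar y_{x,t}-cN_{x,t}+c\,\mathsf K_{x,t}v,
\]
with $N_{x,t},\mathsf K_{x,t}v\in\mathfrak M$ by \eqref{eq:explicit expansion of Nt} and the identities in the proof of the proposition.
\item If $F\in\mathfrak M$, then $LF\in\mathfrak M$ by the module closure statements.
\end{itemize}

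The representation \eqref{eq:deco for L^j x} follows by induction on $j$. For $j=1$, $Lx=v=\tfrac12 a(t)x+\tfrac12\lambda(t)\bar y_{x,t}$, so $A_1=a/2$, $\widetilde A_1=\lambda/2$, and $R_1=0\in\mathfrak M$. For the inductive step, $L^{j+1}x=L(L^jx)\in\mathcal G$ by the invariance just proved. By definition of $\mathcal G$, every element decomposes as $A(t)x+\widetilde A(t)\bar y_{x,t}+R$ with $R\in\mathfrak M$, which gives $A_{j+1},\widetilde A_{j+1},R_{j+1}$. Explicitly,
\[
A_{j+1}=-A_j'+\tfrac12 aA_j,\qquad \widetilde A_{j+1}=-\widetilde A_j'+\tfrac12\lambda A_j.
\]
All remaining contributions, including $-\widetilde A_jN_{x,t}$, $\widetilde A_j\mathsf K_{x,t}v$, and $LR_j$, lie in $\mathfrak M$.

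The main subtlety is not in this lemma but in the proposition it relies on. That proposition must hold for directions $v$ that are themselves in $\mathcal G$, including the non-centered fields $x$ and $\bar y_{x,t}$. This is why $\mathsf K_{x,t}x=\lambda\mathcal M_{1,0}[\varnothing]$ is needed: it keeps $x$ entering only through the centered factor $\langle Z,x\rangle$. A second point to check is regularity: $p_t$ is smooth and $P_0$ is compactly supported, so all posterior moments are smooth in $(t,x)$ for $t>0$. Differentiation under the integral is therefore justified, and the iterates $L^jx$ are well defined.
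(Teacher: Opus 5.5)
Your proposal is correct and follows the paper's proof: since $\hat s=a(t)x+\lambda(t)\bar y_{x,t}\in\mathcal G$, Proposition~\ref{prop:posterior-differentiation-closure} gives $L\mathcal G\subseteq\mathcal G$, and the decomposition then follows by induction on $j$ starting from $x\in\mathcal G$. Your explicit recursions for $A_{j+1}$, $\widetilde A_{j+1}$, and $R_{j+1}=LR_j+\widetilde A_jL\bar y_{x,t}$ are the same ones the paper records later, in the proof of Proposition~\ref{prop:quantitative-normal-form-decomposition}.
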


\begin{proof}
Recall that
\begin{equation*}
    L
    =
    -\partial_t
    +
    \frac12D_x[\cdot][\hat s],
    \qquad
    \hat s(t,x)
    =
    a(t)x+\lambda(t)\bar y_{x,t}
    \in\mathcal G.
\end{equation*}
Proposition~\ref{prop:posterior-differentiation-closure} therefore gives
$L\mathcal G\subseteq\mathcal G$. Since $x\in\mathcal G$, the decomposition
\eqref{eq:deco for L^j x} follows by induction on $j$.
\end{proof}

The result above is qualitative: it identifies the finite collection of
algebraic structures generated by repeated material differentiation, but
does not yet control their size. The next subsection equips the same normal
form with weights that quantify the growth of each term.

\subsection{Weighted estimates for the centered normal form}
\label{subsec:subsec3.3-Weighted estimates for the centered normal form}

In a normal-form term, the decay of the time-dependent coefficient must
be balanced against the growth allowed by the posterior moments.
Each centered ambient insertion $\langle Z,x\rangle$ may contribute a
factor $e^{t/2}$. We therefore begin by counting these insertions and
the powers of the support radius before introducing the time weights.

\paragraph{Insertion count and radius degree.}
The weights below are assigned to a chosen normal-form expression.
For the terminal insertion, set
$K(\bar y_{x,t})=0$ and $\rho(\bar y_{x,t})=1$.
For scalar and vector atoms, define recursively
\begin{equation*}
\begin{aligned}
K\left(\mathcal S_{k,m}[U_1,\ldots,U_\ell]\right)
&
:=
k+\sum_{i=1}^{\ell}K(U_i),
\\
K\left(\mathcal M_{k,m}[U_1,\ldots,U_\ell]\right)
&:=
k+\sum_{i=1}^{\ell}K(U_i),\\
\rho\left(\mathcal S_{k,m}[U_1,\ldots,U_\ell]\right)
&:=
2k+m+\ell+\sum_{i=1}^{\ell}\rho(U_i),
\\
\rho\left(\mathcal M_{k,m}[U_1,\ldots,U_\ell]\right)
&:=
2k+m+\ell+1+\sum_{i=1}^{\ell}\rho(U_i).
\end{aligned}
\end{equation*}
Both quantities are extended additively over products, with time-dependent
coefficients contributing zero.

Thus $K$ counts the factors $\langle Z,x\rangle$, including those inside
nested insertions. The degree $\rho$ records the corresponding worst-case
support-radius degree. Each ambient insertion contributes two radius powers,
each remaining centered factor $Z$ contributes one, and the terminal vector
$\bar y_{x,t}$ contributes one.

\begin{lemma}[Size of centered monomials]
\label{lem:size-centered-atom}
Suppose that $P_0$ is supported in $B_R(0)$ and satisfies
Assumption~\ref{ass:uniform-edge-doubling}. Let
\begin{equation*}
    \mathcal U(t,x)
    =
    \left(
        \prod_{\nu=1}^{N}\mathcal S_\nu(t,x)
    \right)
    \mathcal V(t,x),
\end{equation*}
where each $\mathcal S_\nu$ is a scalar atom and $\mathcal V$ is either
the terminal vector $\bar y_{x,t}$ or a vector atom.
Then, for every $t>0$ and $x\in\mathbb R^d$,
\begin{equation*}
    |\mathcal U(t,x)|
    \leq
    C_{\mathcal U}
    e^{K(\mathcal U)t/2}
    (1+R)^{\rho(\mathcal U)}.
\end{equation*}
The constant depends only on the algebraic structure of $\mathcal U$ and the doubling constant $D_{\mathrm{UD}}$.
\end{lemma}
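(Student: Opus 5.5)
We prove the bound first for a single atom (scalar or vector), by induction on the insertion depth, and then multiply over the factors of $\mathcal U$. Since $K$ and $\rho$ are additive over products, the bound for $\mathcal U$ follows from the atom-level bounds $|\mathcal S_\nu|\le C e^{K(\mathcal S_\nu)t/2}(1+R)^{\rho(\mathcal S_\nu)}$ and the analogous bound for $\mathcal V$. It is convenient to aim for a stronger, $L^q$-type statement at the atom level, because each atom is an expectation of a product of factors and we will need Hölder's inequality.

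\textbf{Step 1: base case and elementary bounds.} The terminal vector satisfies $|\bar y_{x,t}|\le R\le(1+R)^{1}$, consistent with $K=0$, $\rho=1$. Since $\mu_{x,t}$ is supported in $B_R(0)$, we have $|Z|\le 2R$ almost surely, so $|Z|^m\le (2R)^m$. For the normal factor, Theorem~\ref{thm:sec-3-normal-fluctuation-two-scale} gives, for every $q\ge1$,
\[
\lVert\langle Z,x\rangle\rVert_{L^q(\mu_{x,t})}\le C_q\bigl(e^{t/2}+e^{-t/2}R^2\bigr)\le 2C_q e^{t/2}(1+R)^2 ,
\]
using $e^{-t/2}\le e^{t/2}$ for $t>0$. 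This is uniform in $x$ and accounts for weight $K=1$ and $\rho=2$ per ambient insertion.

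\textbf{Step 2: induction on insertion depth.} Suppose every admissible insertion $U_i$ of smaller depth satisfies $|U_i(t,x)|\le C_{U_i}e^{K(U_i)t/2}(1+R)^{\rho(U_i)}$. Each $U_i$ is either $\bar y_{x,t}$ or a vector atom, and it is held fixed in the outer expectation, so it is deterministic there. Hence
\[
|\langle Z,U_i\rangle|\le 2R\,|U_i|\le 2C_{U_i}e^{K(U_i)t/2}(1+R)^{1+\rho(U_i)}.
\]
For an atom $\mathcal M_{k,m}[U_1,\dots,U_\ell]$, pull out the deterministic bounds on $|Z|\,|Z|^m\prod_i|\langle Z,U_i\rangle|$ in sup norm. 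What remains is $\mathbb E\,|\langle Z,x\rangle|^k$, which Step 1 bounds by $C e^{kt/2}(1+R)^{2k}$. Collecting exponents gives $K=k+\sum_i K(U_i)$ and $\rho=1+m+\ell+2k+\sum_i\rho(U_i)$, exactly matching the definitions; the scalar case is identical without the leading $1$. Because only one random factor is left unbounded, Hölder's inequality is not even needed.

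\textbf{Main obstacle.} The step I expect to need care is that nested insertions $U_i$ are themselves posterior expectations that may contain $\langle Z',x\rangle$ factors. One must check that bounding them pointwise in $x$ via the induction hypothesis, rather than inside the outer expectation, loses nothing. It does not, because the bounds are uniform in $x$; this uniformity is exactly what Theorem~\ref{thm:sec-3-normal-fluctuation-two-scale} provides. The constants depend only on $k,m,\ell$, the recursive structure (through $C_q$ with $q=k$), and $D_{\mathrm{UD}}$.
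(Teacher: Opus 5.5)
Your proof is correct and follows the paper's argument: induction on insertion depth, bounding $|Z|^m$, the leading $Z$, and each $\langle Z,U_i\rangle$ in sup norm (using $|Z|\le 2R$, $|\bar y_{x,t}|\le R$ and the inductive bounds on the $U_i$), which leaves only $\mathbb E_{\mu_{x,t}}|\langle Z,x\rangle|^k$ to be controlled by Theorem~\ref{thm:sec-3-normal-fluctuation-two-scale}, and then multiplying over the factors using additivity of $K$ and $\rho$. As you yourself observe, the $L^q$-strengthening and H\"older step announced at the start are never needed.
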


\begin{proof}
Since $P_0$ is supported in $B_R(0)$,
$|Z|\leq2R$ and $|\bar y_{x,t}|\leq R$. Moreover,
Theorem~\ref{thm:sec-3-normal-fluctuation-two-scale} gives, for every $q\geq1$,
\begin{equation*}
    \mathbb E_{\mu_{x,t}}
    \left[
        |\langle Z,x\rangle|^q
    \right]
    \leq
    C_q e^{qt/2}(1+R)^{2q}.
\end{equation*}
The terminal insertion satisfies the required estimate because
$K(\bar y_{x,t})=0$ and $\rho(\bar y_{x,t})=1$.
We now proceed by induction on insertion depth for scalar and vector
atoms. For a scalar atom,
$\mathcal S_{k,m}[U_1,\ldots,U_\ell]$,
\begin{equation*}
    \left|
        \mathcal S_{k,m}[U_1,\ldots,U_\ell]
    \right|
    \leq
    (2R)^{m+\ell}
    \left(
        \prod_{i=1}^{\ell}|U_i|
    \right)
    \mathbb E_{\mu_{x,t}}
    \left[
        |\langle Z,x\rangle|^k
    \right],
\end{equation*}
Using the bounds for the insertions, the right-hand side is bounded by
\begin{equation*}
    C
    e^{\left(k+\sum_{i=1}^{\ell}K(U_i)\right)t/2}
    (1+R)^{2k+m+\ell+\sum_{i=1}^{\ell}\rho(U_i)},
\end{equation*}
which has exactly the insertion count and radius degree assigned to
the scalar atom. This calculation also covers depth-zero atoms, for
which the insertion list is empty.

For a vector atom, the leading $Z$ contributes one additional factor
$2R$, accounted for by the extra $+1$ in its radius degree.
This proves the bounds for both types of atoms and completes the
induction.

Finally, multiply the estimates for the scalar factors and the vector
factor in $\mathcal U$. The additivity of $K$ and $\rho$ over products gives the stated
bound. 
\end{proof}

\paragraph{Weighted vector normal-form class.}
We now combine the preceding moment bound with estimates on the
time-dependent coefficients. Set
\begin{equation*}
    b(t):=(1-e^{-t})^{-1},
    \qquad
    a(t)=-e^{-t}b(t),
    \qquad
    \lambda(t)=e^{-t/2}b(t).
\end{equation*}
Since $b(t)\sim t^{-1}$ as $t\downarrow0$ and $b(t)\to1$ as
$t\to\infty$, powers of $b(t)$ record the singularity near forward
time zero.
\begin{definition}[Weighted normal-form class]
\label{def:weighted-normal-form-class}
Let $\sigma,\eta,\rho\geq0$. Consider an elementary vector monomial
\begin{equation*}
    \mathcal B(t,x)
    =
    c(t)
    \left(
        \prod_{\nu=1}^{N}\mathcal S_\nu(t,x)
    \right)
    \mathcal V(t,x),
\end{equation*}
where each $\mathcal S_\nu$ is a scalar atom and $\mathcal V$ is either
$\bar y_{x,t}$ or a vector atom.
We require its chosen representation to satisfy
$\rho(\mathcal B)\leq\rho$ and, for every $q\in\mathbb N_0$, a bound
\begin{equation*}
    |\partial_t^qc(t)|
    \leq
    C_q
    e^{-(K(\mathcal B)+\sigma)t/2}
    b(t)^{\eta+q}.
\end{equation*}
with $C_q$ independent of $t$.
A vector field belongs to $\mathfrak W_{\sigma,\eta,\rho}$ if it
admits a finite sum of monomials satisfying these conditions.
\end{definition}
The factor $e^{-K(\mathcal B)t/2}$ compensates for the moment growth
in Lemma~\ref{lem:size-centered-atom}, leaving the decay
$e^{-\sigma t/2}$. The indices $\eta$ and $\rho$ record the singular
power of $b(t)$ and the support-radius degree, respectively.
Under the assumptions of Lemma~\ref{lem:size-centered-atom},
combining its moment bound with the coefficient estimate at $q=0$
gives
\begin{equation}\label{eq:weighted-class-pointwise-bound}
    |\mathcal B(t,x)|
    \leq
    C_{\mathcal B}
    e^{-\sigma t/2}
    b(t)^\eta
    (1+R)^\rho,
    \qquad
    \mathcal B\in\mathfrak W_{\sigma,\eta,\rho}.
\end{equation}
Here $C_{\mathcal B}$ depends on the chosen finite representation,
its coefficient bounds, and $D_{\mathrm{UD}}$.
Since $b(t)\geq1$, the defining conditions also give the inclusion
\begin{equation}\label{eq:weighted-class-monotonicity}
\left.
\begin{aligned}
    \sigma'&\geq\sigma,\\
    \eta'&\leq\eta,\\
    \rho'&\leq\rho
\end{aligned}
\right\}
\qquad\Longrightarrow\qquad
\mathfrak W_{\sigma',\eta',\rho'}
\subseteq
\mathfrak W_{\sigma,\eta,\rho}.
\end{equation}
We will also repeatedly use the following elementary coefficient estimates. For
every $q\in\mathbb N_0$,
\begin{equation}\label{eq:partial t estimate for a and lambda}
|\partial_t^qa(t)|
\leq
C_qe^{-t}b(t)^{q+1},
\qquad
|\partial_t^q\lambda(t)|
\leq
C_qe^{-t/2}b(t)^{q+1}.
\end{equation}

\paragraph{Basic weighted blocks.}
To propagate the weighted bounds through the recursive insertions, we first identify the weighted classes of the two derivatives of the terminal vector $\bar{y}_{x,t}$ that appear in the time and transport directions. Recall \begin{equation*}
c_x(t)
:=
\frac{\lambda(t)}2+e^{-t/2}\lambda(t)^2,
\qquad
c_0(t):=\frac{\lambda(t)^2}{2}.
\end{equation*}
The coefficient estimates~\eqref{eq:partial t estimate for a and lambda} imply that, for every $q\in\mathbb N_0$,
\begin{equation*}
\begin{aligned}
|\partial_t^qc_x(t)|
\leq
C_qe^{-t/2}b(t)^{q+2},
\qquad
|\partial_t^qc_0(t)|
\leq
C_qe^{-t}b(t)^{q+2}.
\end{aligned}
\end{equation*}
The three vector atoms in \eqref{eq:explicit expansion of Nt} have
insertion counts $1,0,0$, respectively, and all have radius degree $3$.
Combining these counts yields
\begin{equation*}
\begin{aligned}
    c_x(t)\mathcal M_{1,0}[\varnothing]
    &\in
    \mathfrak W_{0,2,3},
    \\
    c_0(t)\mathcal M_{0,2}[\varnothing],
    \quad
    2c_0(t)\mathcal M_{0,0}[\bar y_{x,t}]
    &\in
    \mathfrak W_{2,2,3}.
\end{aligned}
\end{equation*}
The expansion \eqref{eq:explicit expansion of Nt}, together with
the inclusion \eqref{eq:weighted-class-monotonicity}, therefore gives
\begin{equation}\label{eq:Nt-weighted-class}
    \partial_t\bar y_{x,t}
    =
    N_{x,t}
    \in
    \mathfrak W_{0,2,3}.
\end{equation}
For spatial differentiation along $\hat s$, we have
\begin{equation}\label{eq:Dx-ybar-score-expansion}
\begin{aligned}
    D_x\bar y_{x,t}[\hat s]=
    \mathsf K_{x,t}\hat s=
    \lambda(t)a(t)\mathcal M_{1,0}[\varnothing]
    +
    \lambda(t)^2
    \mathcal M_{0,0}[\bar y_{x,t}].
\end{aligned}
\end{equation}
The corresponding coefficients satisfy
\begin{equation}
\begin{aligned}
    |\partial_t^q(\lambda(t)a(t))|
    &\leq
    C_qe^{-3t/2}b(t)^{q+2},
    \\
    |\partial_t^q(\lambda(t)^2)|
    &\leq
    C_qe^{-t}b(t)^{q+2},
    \qquad q\in\mathbb N_0.
\end{aligned}
\end{equation}
After accounting for the insertion counts $1$ and $0$, both terms in
\eqref{eq:Dx-ybar-score-expansion} retain the exponential factor
$e^{-t}$ and belong to $\mathfrak W_{2,2,3}$. Hence
\begin{equation}\label{eq:Dx-ybar-score-weighted}
    D_x\bar y_{x,t}[\hat s]
    \in
    \mathfrak W_{2,2,3}.
\end{equation}
Combining \eqref{eq:Nt-weighted-class},
\eqref{eq:Dx-ybar-score-weighted} and using
\eqref{eq:weighted-class-monotonicity}, we obtain
\begin{equation}\label{eq:L-ybar-weighted}
    L\bar y_{x,t}
    =
    -\partial_t\bar y_{x,t}
    +
    \frac12D_x\bar y_{x,t}[\hat s]
    \in
    \mathfrak W_{0,2,3}.
\end{equation}
These bounds provide the terminal case for the insertion-depth
induction below.

\begin{lemma}[Weighted differentiation rules]
\label{lem:weighted-differentiation-rules}
For every $\sigma,\eta,\rho\geq0$,
\begin{align}
\partial_t:
\mathfrak W_{\sigma,\eta,\rho}
&\longrightarrow
\mathfrak W_{\sigma,\eta+2,\rho+2},
\label{eq:weighted-time-rule}
\\
D_x[\cdot][\hat{s}]:
\mathfrak W_{\sigma,\eta,\rho}
&\longrightarrow
\mathfrak W_{\sigma+2,\eta+2,\rho+2}.
\label{eq:weighted-transport-rule}
\end{align}
Consequently,
\begin{equation}\label{eq:count rule for L}
    L:
    \mathfrak W_{\sigma,\eta,\rho}
    \longrightarrow
    \mathfrak W_{\sigma,\eta+2,\rho+2}.
\end{equation}
\end{lemma}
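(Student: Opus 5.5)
We argue monomial by monomial. By linearity and the product rule it suffices to treat an elementary monomial $\mathcal B=c(t)\bigl(\prod_\nu\mathcal S_\nu\bigr)\mathcal V$ in $\mathfrak W_{\sigma,\eta,\rho}$. Differentiating $\mathcal B$ produces three kinds of terms: the derivative falls on the coefficient $c$, on one scalar atom $\mathcal S_\nu$, or on the vector factor $\mathcal V$. The coefficient case is immediate for $\partial_t$. We have $|\partial_t^q(\partial_tc)|\le C_{q+1}e^{-(K+\sigma)t/2}b^{\eta+q+1}$, which lies in the class with $\eta+1\le\eta+2$, and the inclusion \eqref{eq:weighted-class-monotonicity} absorbs the difference. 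The coefficient $c$ is $x$-independent, so $D_x$ does not act on it.

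For the atoms we prove, by induction on insertion depth, an atom-level statement: for any scalar or vector atom $\mathcal X$ with counts $K(\mathcal X)$, $\rho(\mathcal X)$,
\begin{equation*}
\partial_t\mathcal X=\sum_i c_i(t)\mathcal P_i,
\qquad
D_x\mathcal X[\hat s]=\sum_i \tilde c_i(t)\widetilde{\mathcal P}_i.
\end{equation*}
Here the $\mathcal P_i,\widetilde{\mathcal P}_i$ are products of atoms (with a vector factor if $\mathcal X$ is a vector). They satisfy $\rho\le\rho(\mathcal X)+2$, and the coefficients satisfy
\begin{equation*}
|\partial_t^q c_i|\le C_q e^{-(K(\mathcal P_i)-K(\mathcal X))t/2}b^{q+2},
\qquad
|\partial_t^q\tilde c_i|\le C_q e^{-(K(\widetilde{\mathcal P}_i)-K(\mathcal X)+2)t/2}b^{q+2}.
\end{equation*}
Multiplying by the remaining factors of $\mathcal B$ and using additivity of $K,\rho$ then gives \eqref{eq:weighted-time-rule} and \eqref{eq:weighted-transport-rule}. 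The rule \eqref{eq:count rule for L} follows from $L=-\partial_t+\frac12D_x[\cdot][\hat s]$ and monotonicity, since $\mathfrak W_{\sigma+2,\eta+2,\rho+2}\subseteq\mathfrak W_{\sigma,\eta+2,\rho+2}$. The terminal case $\mathcal X=\bar y_{x,t}$ is exactly \eqref{eq:Nt-weighted-class} and \eqref{eq:Dx-ybar-score-weighted}.

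The inductive step runs through the expansions \eqref{eq:D_t E[F] form} and \eqref{eq:D_x E[F] form} term by term, with $v=\hat s=a x+\lambda\bar y_{x,t}$, and does the bookkeeping of $K$, $\rho$ and the coefficients in each case. For time differentiation, the posterior-weight term \eqref{eq:DtEF-posterior} is expanded via \eqref{eq:centered-time-score}. The $-c_x\langle Z,x\rangle$ piece raises $K$ by $1$ and $\rho$ by $2$, and its coefficient $e^{-t/2}b^2$ exactly pays for the new $e^{t/2}$. The $c_0$ pieces (the terms $|Z|^2$, $\mathcal S_{0,2}$ times the atom, and the insertion of $\bar y$) raise $\rho$ by $2$ with coefficient $e^{-t}b^2$. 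The centering terms \eqref{eq:DtEF-leading-Z}, \eqref{eq:DtEF-center-x}, \eqref{eq:DtEF-radius} and \eqref{eq:DtEF-center-insertion} all involve $N_{x,t}\in\mathfrak W_{0,2,3}$, either as a factor or as an insertion. A contraction such as $\langle N,x\rangle$ removes one $Z$ and one $x$-count from the parent and adds the weights of $N$ plus one, so $\rho$ goes up by at most $2$. The recursive term \eqref{eq:DtEF-recursive-insertion} is covered by the induction hypothesis for $U_j$; inserting $\partial_tU_j$ in place of $U_j$ adds the increments for $U_j$ to the parent's counts, because $K$ and $\rho$ are additive over insertions. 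The transport direction is parallel. The inserted $\langle Z,\hat s\rangle=a\langle Z,x\rangle+\lambda\langle Z,\bar y\rangle$ carries coefficients $e^{-t}b$ and $e^{-t/2}b$; with the extra $\lambda=e^{-t/2}b$ from \eqref{eq:DxEF-posterior}, these leave an extra $e^{-t}$ after paying for any new $\langle Z,x\rangle$. The term \eqref{eq:DxEF-direct-x} has $\langle Z,\hat s\rangle$ replacing one $\langle Z,x\rangle$ with no $\lambda$ prefactor. It lowers $K$ by one and keeps $\rho$ up to $+1$. The factor $a\sim e^{-t}$ on its $x$-part, and $\lambda$ on its $\bar y$-part combined with the gain $e^{-t/2}$ from lowering $K$, again give the extra $e^{-t}$. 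The $\mathsf K\hat s$ terms use \eqref{eq:Dx-ybar-score-weighted}.

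The main obstacle is the $\eta$-count, that is, the powers of $b$, which must grow by at most $2$ per differentiation. Every term carries at most $b^2$ (from $\lambda^2$, $\lambda a$, $c_x$, $c_0$, or the classes of $N$ and $\mathsf K\hat s$), and $\partial_t^q$ of a product of such coefficients costs at most $b^q$ more, by \eqref{eq:partial t estimate for a and lambda}. A subtle point is that in the recursive terms the $b^2$ from $\partial_t U_j$ must not be counted twice: exactly one factor is differentiated per term, so only one such increment occurs. I would track the increments $(\Delta K,\Delta\rho,\text{coefficient})$ per term in a table to make the verification mechanical.
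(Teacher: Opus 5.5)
Your proposal is correct and follows essentially the same route as the paper: you reduce to elementary monomials via the product rule, prove an atom-level expansion with coefficient bound $e^{-(\Delta K+\tau)t/2}b(t)^{q+2}$ by induction on insertion depth (terminal case from the classes of $N_{x,t}$ and $\mathsf K_{x,t}\hat s$), and do term-by-term bookkeeping of the nonrecursive contributions exactly as in the paper's two replacement tables. One slip is cosmetic: in the direct-$x$ term only the $\lambda\langle Z,\bar y_{x,t}\rangle$ part lowers $K$, and both parts leave $\rho$ unchanged; your subsequent coefficient count already reflects this, so the conclusion $\Delta\sigma=2$ stands.
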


\begin{proof}
By linearity, it suffices to consider an elementary vector monomial
\begin{equation*}
    \mathcal B(t,x)
    =
    c(t)
    \left(
        \prod_{\nu=1}^{N}\mathcal S_\nu(t,x)
    \right)
    \mathcal V(t,x),
\end{equation*}
where $\mathcal V$ is either a vector atom or the terminal vector
$\bar y_{x,t}$. By the product rule, differentiation acts either on the
deterministic coefficient $c(t)$ or on one atom factor. Since $K$ and
$\rho$ are additive over products, it is enough to keep track of the
relative change produced by differentiating a single factor. We use the atom-level identities
\eqref{eq:D_t E[F] form} and \eqref{eq:D_x E[F] form}.
For a scalar atom, the same bookkeeping applies after omitting
\eqref{eq:DtEF-leading-Z} and \eqref{eq:DxEF-leading-Z}, which arise only
from differentiating the leading vector $Z$.

Before treating the two derivatives, we fix the convention used in the
tables below. The ``old factor'' is the factor in the original atom
that is being replaced, while the ``new factor'' records the corresponding
factor after differentiation; all surrounding factors remain unchanged.
If the replacement carries a deterministic coefficient $d(t)$ satisfying
\begin{equation*}
    |\partial_t^q d(t)|
    \leq
    C_q e^{-r t/2}b(t)^{p+q},
\end{equation*}
and changes the insertion count by $\Delta K$, then
$
    \Delta\eta=p, \ 
    \Delta\sigma=r-\Delta K.
$
Indeed, the original coefficient contributes
$e^{-(K+\sigma)t/2}$, whereas after the replacement the atom has insertion
count $K+\Delta K$. Thus the remaining exponential decay is precisely
$e^{-\Delta\sigma t/2}$.
The radius increment $\Delta\rho$ is read directly from the definition of
$\rho$.

\emph{Time differentiation.}
Table~\ref{tab:time-differentiation-replacements} lists all
nonrecursive replacements appearing in
\eqref{eq:D_t E[F] form}. The first four rows come from the posterior-weight
term \eqref{eq:DtEF-posterior}. The last three arise whenever a centered
occurrence of $Z$ is replaced by
\begin{equation*}
    N_{x,t}
    =
    -c_x(t)\mathcal M_{1,0}[\varnothing]
    +
    c_0(t)\mathcal M_{0,2}[\varnothing]
    +
    2c_0(t)\mathcal M_{0,0}[\bar y_{x,t}],
\end{equation*}
and hence occur in
\eqref{eq:DtEF-leading-Z},
\eqref{eq:DtEF-center-x},
\eqref{eq:DtEF-radius}, and
\eqref{eq:DtEF-center-insertion}.

\begin{table}[htbp]
\centering
\small
\setlength{\arraycolsep}{3pt}
\renewcommand{\arraystretch}{1.35}
$
\begin{array}{@{}c|c|c|c|c@{}}
\text{source}
&
\text{old factor}
&
\text{new factor}
&
\text{coefficient bound}
&
(\Delta K,\Delta\sigma,\Delta\eta,\Delta\rho)
\\ \hline

\multirow{4}{*}{\eqref{eq:DtEF-posterior}}
&
1
&
c_x\langle Z,x\rangle
&
|\partial_t^q c_x|
\lesssim e^{-t/2}b^{q+2}
&
(1,0,2,2)
\\

&
1
&
c_0|Z|^2
&
|\partial_t^q c_0|
\lesssim e^{-t}b^{q+2}
&
(0,2,2,2)
\\

&
1
&
c_0\mathcal S_{0,2}[\varnothing]
&
|\partial_t^q c_0|
\lesssim e^{-t}b^{q+2}
&
(0,2,2,2)
\\

&
1
&
2c_0\langle Z,\bar y_{x,t}\rangle
&
|\partial_t^q(2c_0)|
\lesssim e^{-t}b^{q+2}
&
(0,2,2,2)
\\ \hline

\multirow{3}{*}{
    $\begin{gathered}
        \eqref{eq:DtEF-leading-Z}\quad
        \eqref{eq:DtEF-center-x}
        \\[2mm]
        \eqref{eq:DtEF-radius}\quad
        \eqref{eq:DtEF-center-insertion}
    \end{gathered}$
}
&
Z
&
c_x\mathcal M_{1,0}[\varnothing]
&
|\partial_t^q c_x|
\lesssim e^{-t/2}b^{q+2}
&
(1,0,2,2)
\\

&
Z
&
c_0\mathcal M_{0,2}[\varnothing]
&
|\partial_t^q c_0|
\lesssim e^{-t}b^{q+2}
&
(0,2,2,2)
\\

&
Z
&
2c_0\mathcal M_{0,0}[\bar y_{x,t}]
&
|\partial_t^q(2c_0)|
\lesssim e^{-t}b^{q+2}
&
(0,2,2,2)
\end{array}
$
\caption{Nonrecursive factor replacements and index increments
under time differentiation.}
\label{tab:time-differentiation-replacements}
\end{table}

In the last three rows, the symbol $Z$ denotes the centered occurrence
being differentiated. Depending on the source, it is respectively the
leading vector, the centered variable inside $\langle Z,x\rangle$, one of
the centered factors represented by $|Z|^m$, or the centered variable
inside $\langle Z,U_j\rangle$. The surrounding contraction is present both
before and after the replacement and therefore does not alter the relative
increments in the table.
For example, in \eqref{eq:DtEF-center-x}, the replacement
$Z\mapsto c_x\mathcal M_{1,0}[\varnothing]$ converts
$\langle Z,x\rangle$ into
$c_x\langle\mathcal M_{1,0}[\varnothing],x\rangle$.
Relative to the original factor this adds one unit of $K$ and two units of
$\rho$, exactly as recorded in the fifth row. The same calculation applies
to the radial and inserted-contraction terms.
Every nonrecursive time-differentiation term satisfies
\begin{equation*}
    \Delta\rho\leq2,
    \qquad
    \Delta\eta\leq2,
    \qquad
    \Delta\sigma\geq0.
\end{equation*}
Rows with $\Delta\sigma=2$ have stronger residual decay than required and
are included in the target class by monotonicity. This proves the
nonrecursive part of \eqref{eq:weighted-time-rule}.

\emph{Transport differentiation.}
We now set $v=\hat s$ in \eqref{eq:D_x E[F] form}. The posterior-weight
term \eqref{eq:DxEF-posterior} uses
\begin{equation*}
    \lambda(t)\langle Z,\hat s\rangle
    =
    \lambda(t)a(t)\langle Z,x\rangle
    +
    \lambda(t)^2\langle Z,\bar y_{x,t}\rangle.
\end{equation*}
Whenever differentiation of a centered occurrence produces
$\mathsf K_{x,t}\hat s$, we use
\eqref{eq:Dx-ybar-score-expansion},
\begin{equation*}
    \mathsf K_{x,t}\hat s
    =
    \lambda(t)a(t)\mathcal M_{1,0}[\varnothing]
    +
    \lambda(t)^2
    \mathcal M_{0,0}[\bar y_{x,t}].
\end{equation*}
Finally, the direct derivative
\eqref{eq:DxEF-direct-x} of the explicit $x$ in
$\langle Z,x\rangle$ produces
$\langle Z,\hat s\rangle
=
a\langle Z,x\rangle+
\lambda\langle Z,\bar y_{x,t}\rangle$.
These give all nonrecursive transport replacements, summarized in
Table~\ref{tab:transport-replacements}.

\begin{table}[htbp]
\centering
\small
\setlength{\arraycolsep}{3pt}
\renewcommand{\arraystretch}{1.35}
$
\begin{array}{@{}c|c|c|c|c@{}}
\text{source}
&
\text{old factor}
&
\text{new factor}
&
\text{coefficient bound}
&
(\Delta K,\Delta\sigma,\Delta\eta,\Delta\rho)
\\ \hline

\multirow{2}{*}{\eqref{eq:DxEF-posterior}}
&
1
&
\lambda a\langle Z,x\rangle
&
|\partial_t^q(\lambda a)|
\lesssim e^{-3t/2}b^{q+2}
&
(1,2,2,2)
\\

&
1
&
\lambda^2\langle Z,\bar y_{x,t}\rangle
&
|\partial_t^q(\lambda^2)|
\lesssim e^{-t}b^{q+2}
&
(0,2,2,2)
\\ \hline

\multirow{2}{*}{
    $\begin{gathered}
        \eqref{eq:DxEF-leading-Z}\quad
        \eqref{eq:DxEF-center-x}
        \\[2mm]
        \eqref{eq:DxEF-radius}\quad
        \eqref{eq:DxEF-center-insertion}
    \end{gathered}$
}
&
Z
&
\lambda a\mathcal M_{1,0}[\varnothing]
&
|\partial_t^q(\lambda a)|
\lesssim e^{-3t/2}b^{q+2}
&
(1,2,2,2)
\\

&
Z
&
\lambda^2\mathcal M_{0,0}[\bar y_{x,t}]
&
|\partial_t^q(\lambda^2)|
\lesssim e^{-t}b^{q+2}
&
(0,2,2,2)
\\ \hline

\multirow{2}{*}{\eqref{eq:DxEF-direct-x}}
&
\langle Z,x\rangle
&
a\langle Z,x\rangle
&
|\partial_t^q a|
\lesssim e^{-t}b^{q+1}
&
(0,2,1,0)
\\

&
\langle Z,x\rangle
&
\lambda\langle Z,\bar y_{x,t}\rangle
&
|\partial_t^q\lambda|
\lesssim e^{-t/2}b^{q+1}
&
(-1,2,1,0)
\end{array}
$
\caption{Nonrecursive factor replacements and index increments
for the transport term.}
\label{tab:transport-replacements}
\end{table}

The third and fourth rows simultaneously cover
\eqref{eq:DxEF-leading-Z},
\eqref{eq:DxEF-center-x},
\eqref{eq:DxEF-radius}, and
\eqref{eq:DxEF-center-insertion}: in each case one centered occurrence of
$Z$ is replaced by one of the two components of
$\mathsf K_{x,t}\hat s$. As in the time table, the surrounding contraction
does not affect the relative bookkeeping.

The last row is worth noting. Replacing
$\langle Z,x\rangle$ by
$\lambda(t)\langle Z,\bar y_{x,t}\rangle$ removes one centered ambient insertion,
so $\Delta K=-1$. The factor $\lambda(t)$ carries only
$e^{-t/2}$, but after accounting for this decrease in $K$ the residual
decay still gains two units:
$
    \Delta\sigma
    =
    1-(-1)
    =
    2.
$
Thus every nonrecursive transport term satisfies
\begin{equation*}
    \Delta\rho\leq2,
    \qquad
    \Delta\eta\leq2,
    \qquad
    \Delta\sigma=2,
\end{equation*}
and therefore belongs to
$\mathfrak W_{\sigma+2,\eta+2,\rho+2}$.

\emph{Recursive insertions.}
It remains to treat
\eqref{eq:DtEF-recursive-insertion} and
\eqref{eq:DxEF-recursive-insertion}, which differentiate an inserted
vector. We handle these terms by induction on insertion depth, keeping
the coefficients generated by differentiation separate from the atoms.

For either differentiation operation, write
$
    (\mathscr D,\tau)
    =
    (\partial_t,0)
    \text{ or }
    (D_x[\cdot][\hat s],2).
$
For every scalar or vector atom $U$, and for the terminal vector
$U=\bar y_{x,t}$, we claim that there is a finite expansion
\begin{equation*}
    \mathscr D U
    =
    \sum_\nu d_\nu(t)\mathcal U_\nu(t,x)
\end{equation*}
such that
\begin{equation*}
\begin{aligned}
    \rho(\mathcal U_\nu)
    &\leq \rho(U)+2,
    \\
    |\partial_t^q d_\nu(t)|
    &\leq
    C_{\nu,q}
    e^{-\left(K(\mathcal U_\nu)-K(U)+\tau\right)t/2}
    b(t)^{q+2},
    \qquad q\in\mathbb N_0.
\end{aligned}
\end{equation*}
Here each $\mathcal U_\nu$ is a product of scalar atoms, with one
additional factor equal to a vector atom or $\bar y_{x,t}$ when $U$
is vector-valued. These monomials carry no deterministic coefficient.
The exponential factor records the change in insertion count from
$U$ to $\mathcal U_\nu$.

For the terminal vector, the claim follows from
\eqref{eq:Nt-weighted-class} and
\eqref{eq:Dx-ybar-score-weighted}, since
$K(\bar y_{x,t})=0$ and $\rho(\bar y_{x,t})=1$.
Atoms of depth zero have no inserted vectors, so their derivatives
are covered by the two tables above.

Suppose the claim holds for atoms of depth less than $r$, and
consider an atom of depth $r\geq1$. Each insertion $U_j$ is either
the terminal vector or a vector atom of smaller depth. The induction
hypothesis therefore applies to $\mathscr D U_j$.

Consider one term $d(t)\mathcal V$ in this expansion and write
\begin{equation*}
    \mathcal V
    =
    \left(\prod_\ell\mathcal S_\ell\right)\mathcal W,
\end{equation*}
where $\mathcal W$ is a vector atom or the terminal vector.
Its contribution to the differentiated insertion is
\begin{equation*}
    \langle Z,d(t)\mathcal V\rangle
    =
    d(t)
    \left(\prod_\ell\mathcal S_\ell\right)
    \langle Z,\mathcal W\rangle.
\end{equation*}
The scalar factors are independent of the outer integration
variable and can be taken outside the expectation. Thus this term
is again a coefficient times a scalar or vector monomial.

All other factors in the outer atom remain unchanged. Consequently,
the changes in its total insertion count and radius degree are
exactly
\begin{equation*}
    \Delta K=K(\mathcal V)-K(U_j),
    \qquad
    \Delta\rho=\rho(\mathcal V)-\rho(U_j)\leq2.
\end{equation*}
The induction hypothesis bounds the generated coefficient by
\begin{equation*}
    |\partial_t^q d(t)|
    \leq
    C_q e^{-(\Delta K+\tau)t/2}b(t)^{q+2}.
\end{equation*}
This is precisely the coefficient bound required for the
differentiated outer atom. Hence the recursive terms satisfy the
same relative weight estimates as the nonrecursive terms.
The induction is complete.

\emph{Deterministic coefficients and products.}
It remains to differentiate the deterministic coefficient of an elementary
monomial. If $c(t)$ satisfies the defining bound of
$\mathfrak W_{\sigma,\eta,\rho}$, then
\begin{equation*}
    |\partial_t^q c'(t)|
    =
    |\partial_t^{q+1}c(t)|
    \leq
    C_q
    e^{-(K(\mathcal B)+\sigma)t/2}
    b(t)^{\eta+q+1}.
\end{equation*}
Thus coefficient differentiation gives only one additional power of
$b(t)$, which is contained in the time target with index $\eta+2$ because
$b(t)\geq1$. Spatial differentiation does not act on $c(t)$.

The product rule, together with additivity of $K$ and $\rho$ and
multiplicativity of the coefficient profiles, now proves
\eqref{eq:weighted-time-rule} and
\eqref{eq:weighted-transport-rule} for elementary monomials. Linearity
extends the result to finite sums.

Since
$
    L
    =
    -\partial_t
    +
    \frac12D_x[\cdot][\hat s]
$
and
$
    \mathfrak W_{\sigma+2,\eta+2,\rho+2}
    \subseteq
    \mathfrak W_{\sigma,\eta+2,\rho+2},
$
the two differentiation rules imply
\eqref{eq:count rule for L}.
\end{proof}

For stability and the Runge--Kutta analysis, we also need mixed
space--time derivatives. The next lemma treats derivatives contracted
with the position vector $x$ and with fixed unit directions, keeping
track of their different weight increments. The device of the proof is
to treat each fixed direction $w$ as an additional \emph{labelled
insertion} of depth zero, so that the differentiation identities of
Section~\ref{subsec:sec3.2-algebraic-normal-form-closure} apply
verbatim, and to substitute $w=x$ only after all derivatives have
been taken. The two kinds of direction then cost different weights: a
unit direction enters only through $\langle Z,w\rangle$, bounded by
$2R$, and changes the indices by $(\Delta\sigma,\Delta\eta,\Delta\rho)
=(1,1,1)$; the position direction turns $\langle Z,w\rangle$ into the
centered insertion $\langle Z,x\rangle$, which raises the insertion
count $K$ by one and costs $(\Delta\sigma,\Delta\eta,\Delta\rho)
=(-1,0,1)$ relative to the unit case. This is the origin of the
different powers of $b(t)$ and $1+R$ attached to $k$ and $\ell$ in
\eqref{eq:higher-order-mixed-directional-bound}.

\begin{lemma}[Higher-order directional derivative bounds]
\label{lem:sec3-weighted-directional-bound}
Suppose that $P_0$ is supported in $B_R(0)$ and satisfies
Assumption~\ref{ass:uniform-edge-doubling}. Let
$\mathcal B\in\mathfrak W_{\sigma,\eta,\rho}$.
Then, for every $q,k,\ell\in\mathbb N_0$ and every fixed
$w_1,\ldots,w_\ell\in\mathbb R^d$ with $|w_j|\leq1$,
\begin{align}
&
\left|
\partial_t^q
D_x^{k+\ell}\mathcal B(t,x)
\left[
    \underbrace{x,\ldots,x}_{k\ \textrm {times}},
    w_1,\ldots,w_\ell
\right]
\right|
\nonumber\\
&\qquad\leq
C_{\mathcal B,q,k,\ell}
e^{-(\sigma+\ell)t/2}
b(t)^{\eta+2q+k+\ell}
(1+R)^{\rho+2q+2k+\ell}.
\label{eq:higher-order-mixed-directional-bound}
\end{align}
In particular, taking $k=0$ gives
\begin{equation}
    \left\lVert
        \partial_t^qD_x^\ell\mathcal B(t,x)
    \right\rVert_{\mathrm{op}}
    \leq
    C_{\mathcal B,q,\ell}
    e^{-(\sigma+\ell)t/2}
    b(t)^{\eta+2q+\ell}
    (1+R)^{\rho+2q+\ell}.
    \label{eq:higher-order-operator-bound}
\end{equation}
\end{lemma}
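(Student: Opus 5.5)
The plan is to reduce everything to the weighted bookkeeping already set up: derive a differentiation rule for spatial derivatives in a \emph{fixed} direction $w$ (unit or position), apply it $k+\ell$ times, then apply the time rule \eqref{eq:weighted-time-rule} $q$ times, and finish with the pointwise bound \eqref{eq:weighted-class-pointwise-bound}.

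First, extend the atom grammar by allowing labelled depth-zero insertions $w_1,\dots,w_\ell$ and $w'_1,\dots,w'_k$ that are constant in $(t,x)$. Their weights are $K(w)=0$, $\rho(w)=0$ for the insertion itself, with each factor $\langle Z,w\rangle$ still adding one to $\rho$ through the $\ell$ in the definition. Then redo the transport table of Lemma~\ref{lem:weighted-differentiation-rules} with $v=w$ constant in place of $\hat s$. The posterior-weight term \eqref{eq:DxEF-posterior} gives the new factor $\lambda\langle Z,w\rangle$. Centering terms replace $Z$ by $\mathsf K_{x,t}w=\lambda\mathcal M_{0,0}[w]$. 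The direct-$x$ term \eqref{eq:DxEF-direct-x} replaces $\langle Z,x\rangle$ by $\langle Z,w\rangle$, which is $(\Delta K,\Delta\sigma,\Delta\eta,\Delta\rho)=(-1,-1,0,-1)$ and is harmless after monotonicity. Recursive insertions are handled by the same depth induction as before. With $\lambda\lesssim e^{-t/2}b$, each such term has $\Delta K=0$, $\Delta\sigma\ge1$, $\Delta\eta\le1$, $\Delta\rho\le1$, up to the harmless direct-$x$ row, which I dominate using $b\ge1$ and \eqref{eq:weighted-class-monotonicity}. This gives
\[
D_x[\cdot][w]:\mathfrak W_{\sigma,\eta,\rho}\to\mathfrak W_{\sigma+1,\eta+1,\rho+1}.
\]
Time differentiation commutes with constant $w$, so \eqref{eq:weighted-time-rule} still applies to the extended class. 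The proofs of Lemma~\ref{lem:size-centered-atom} and \eqref{eq:weighted-class-pointwise-bound} also go through verbatim, since $|\langle Z,w\rangle|\le 2R|w|$.

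Second, and this is the main obstacle, I handle the $k$ position slots. I would not differentiate in direction $x$ directly, since then $D_x x[v]=v$ would interfere. Instead I take all $k+\ell$ derivatives and the $q$ time derivatives with constant labelled directions $w'_1,\dots,w'_k,w_1,\dots,w_\ell$, which gives multilinear expressions in these labels. Only afterwards do I evaluate at $w'_i=x$; this is legitimate because $D_x^{k+\ell}\mathcal B[\dots]$ is multilinear with the directions frozen. After substitution, each factor $\langle Z,w'_i\rangle$ becomes a centered insertion $\langle Z,x\rangle$, so $K$ rises by one. Each labelled atom $\mathcal M_{0,0}[w'_i]$ becomes $\mathcal M_{1,0}[\varnothing]$, also raising $K$ by one. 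I have to check that every occurrence of $w'_i$ arises in one of these two forms (or as $\mathcal M_{\cdot}[\dots,w'_i,\dots]$ nested inside an insertion) and never bare. This follows from the differentiation identities: $v$ only ever enters through $\langle Z,v\rangle$ or $\mathsf K_{x,t}v$. Hence each $w'_i$ costs one extra $e^{t/2}$ and one extra $(1+R)$ relative to a unit direction, via Theorem~\ref{thm:sec-3-normal-fluctuation-two-scale} as used in Lemma~\ref{lem:size-centered-atom}. I will track this by assigning the label $w'$ a formal weight: substitution shifts the term from $\mathfrak W_{\sigma',\eta',\rho'}$ to $\mathfrak W_{\sigma'-1,\eta',\rho'+1}$ per position slot. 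The exponent bookkeeping then gives:
\begin{itemize}
\item decay $\sigma+(k+\ell)-k=\sigma+\ell$,
\item power of $b$: $\eta+2q+k+\ell$,
\item power of $(1+R)$: $\rho+2q+(k+\ell)+k$.
\end{itemize}
These match \eqref{eq:higher-order-mixed-directional-bound}.

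Finally, the pointwise bound \eqref{eq:weighted-class-pointwise-bound} for the resulting finite representation gives the estimate, with the constant depending on the representation of $\mathcal B$, on $q,k,\ell$, and on $D_{\mathrm{UD}}$. For \eqref{eq:higher-order-operator-bound}, I take $k=0$ and the supremum over unit $w_j$. The constant is uniform in the $w_j$ because the representation is built with symbolic labels and only $|w_j|\le1$ enters.
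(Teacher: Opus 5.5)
Your proposal follows essentially the same route as the paper. You use labelled depth-zero insertions, the unit-direction rule $D_x[\cdot][w]:\mathfrak W_{\sigma,\eta,\rho}\to\mathfrak W_{\sigma+1,\eta+1,\rho+1}$, and differentiation with frozen symbolic directions. You then substitute $w'_i=x$ at the end with shift $(\Delta K,\Delta\sigma,\Delta\eta,\Delta\rho)=(1,-1,0,1)$ per slot, and finish with \eqref{eq:weighted-class-pointwise-bound}. The per-slot shift requires that each label occur exactly once in every monomial; the construction gives this and the paper states it explicitly. Whether $\partial_t^q$ is applied before or after the spatial derivatives is immaterial.

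One correction is needed. In the direct-$x$ row the increment is $(-1,+1,0,-1)$, not $(-1,-1,0,-1)$: removing a factor $\langle Z,x\rangle$ while keeping the coefficient unchanged frees one factor $e^{-t/2}$. The sign matters, because a term with $\Delta\sigma=-1$ could not be absorbed into $\mathfrak W_{\sigma+1,\eta+1,\rho+1}$, either by \eqref{eq:weighted-class-monotonicity} or by $b\ge1$.
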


\begin{proof}
We first account for the time derivatives. Repeated application
of \eqref{eq:weighted-time-rule} gives
\begin{equation*}
    \partial_t^q\mathcal B
    \in
    \mathfrak W_{\sigma,\eta+2q,\rho+2q}.
\end{equation*}
Set $n=k+\ell$ and introduce fixed directions
$w_1,\ldots,w_n$, initially with $|w_j|\leq1$, and write
$\mathbf w=(w_1,\ldots,w_n)$.

\emph{Direction-labelled classes.}
We enlarge the atom construction of
Section~\ref{subsec:sec3.2-algebraic-normal-form-closure} by admitting
each $w_j$ as an additional insertion, with
\begin{equation*}
    \operatorname{dep}(w_j)=0,
    \qquad
    K(w_j)=0,
    \qquad
    \rho(w_j)=0,
\end{equation*}
so that atoms may contain factors $\langle Z,w_j\rangle$ in addition
to $\langle Z,x\rangle$, $|Z|^m$, and $\langle Z,U_i\rangle$. The
vector factor of a weighted monomial remains either $\bar y_{x,t}$ or
a vector atom. We denote by
\begin{equation*}
    \mathfrak W^{\mathbf w}_{\sigma,\eta,\rho}
\end{equation*}
the weighted class of Definition~\ref{def:weighted-normal-form-class}
built on this enlarged set of atoms, with the same indices
$(\sigma,\eta,\rho)$ and the same coefficient bounds.

Since $|w_j|\leq1$ and $|\langle Z,w_j\rangle|\leq2R$,
the proof of Lemma~\ref{lem:size-centered-atom} also applies
to these enlarged classes. Thus
\eqref{eq:weighted-class-pointwise-bound} remains valid,
with constants uniform over the indicated directions for
fixed normal-form structure and coefficient bounds.
We keep the direction vectors as labelled arguments of
the atom expansion; the differentiation identities themselves
hold for arbitrary values of these arguments.

We first consider one spatial derivative in a fixed direction
$w$. Setting $v=w$ in \eqref{eq:D_x E[F] form} gives the
four differentiation mechanisms listed in
Table~\ref{tab:fixed-direction-differentiation}.
Overall signs and fixed numerical factors do not affect
the weight counts.

\begin{table}[htbp]
\centering
\small
\setlength{\arraycolsep}{3pt}
\renewcommand{\arraystretch}{1.35}
$
\begin{array}{@{}c|c|c|c|c@{}}
\text{source}
&
\text{derivative hits}
&
\text{old factor}
&
\text{new factor}
&
(\Delta K,\Delta\sigma,\Delta\eta,\Delta\rho)
\\ \hline

\eqref{eq:DxEF-posterior}
&
\text{posterior density}
&
1
&
\text{add }\lambda(t)\langle Z,w\rangle
&
(0,1,1,1)
\\ \hline

\begin{gathered}
    \eqref{eq:DxEF-leading-Z},\
    \eqref{eq:DxEF-center-x},
    \\[-1mm]
    \eqref{eq:DxEF-radius},\
    \eqref{eq:DxEF-center-insertion}
\end{gathered}
&
\text{a centered }Z
&
Z
&
\mathsf K_{x,t}w
=
\lambda(t)\mathcal M_{0,0}[w]
&
(0,1,1,1)
\\ \hline

\eqref{eq:DxEF-direct-x}
&
\begin{gathered}
    \text{the explicit }x\\
    \text{in }\langle Z,x\rangle
\end{gathered}
&
\langle Z,x\rangle
&
\langle Z,w\rangle
&
(-1,1,0,-1)
\\ \hline

\eqref{eq:DxEF-recursive-insertion}
&
\text{an insertion }U_j
&
U_j
&
D_xU_j[w]
&
\text{controlled inductively}
\end{array}
$
\caption{Differentiation mechanisms and index increments
for a spatial derivative in a fixed direction $w$.}
\label{tab:fixed-direction-differentiation}
\end{table}
In the first row, differentiation of the posterior density
introduces
$
    \lambda(t)\langle Z,w\rangle.
$
The fixed insertion does not increase the normal count $K$.
Hence $\lambda(t)=e^{-t/2}b(t)$ contributes one unit of
residual decay and one power of $b(t)$, while
$\langle Z,w\rangle$ contributes one radius degree.
The coefficient bounds for all time derivatives follow from
\eqref{eq:partial t estimate for a and lambda}.

The same increments occur in the centering terms, since
\begin{equation*}
    D_xZ[w]
    =
    -\mathsf K_{x,t}w
    =
    -\lambda(t)\mathcal M_{0,0}[w].
\end{equation*}
The atom $\mathcal M_{0,0}[w]$ has insertion count zero
and radius degree two, so replacing one centered factor
$Z$ increases the radius degree by one.

Direct differentiation of the explicit $x$ replaces
$\langle Z,x\rangle$ by $\langle Z,w\rangle$.
This removes one normal insertion and reduces the radius
degree by one. The coefficient is unchanged, so the
decrease in $K$ increases the residual decay index by one.
This gives the third row of the table.

For the recursive term
\eqref{eq:DxEF-recursive-insertion}, we use the relative
coefficient argument from the proof of
Lemma~\ref{lem:weighted-differentiation-rules}.
More precisely, for a coefficient-free scalar or vector
atom $U$, we prove an expansion
\begin{equation*}
    D_xU[w]
    =
    \sum_\nu d_\nu(t)\mathcal V_\nu(t,x),
\end{equation*}
where the coefficient-free monomials $\mathcal V_\nu$
have the same scalar or vector type as $U$ and satisfy
\begin{equation*}
\begin{aligned}
    \rho(\mathcal V_\nu)
    &\leq \rho(U)+1,
    \\
    |\partial_t^r d_\nu(t)|
    &\leq
    C_{\nu,r}
    e^{-\left(K(\mathcal V_\nu)-K(U)+1\right)t/2}
    b(t)^{r+1},
    \qquad r\in\mathbb N_0.
\end{aligned}
\end{equation*}
The nonrecursive rows satisfy these estimates.
For scalar atoms, the leading-$Z$ contribution
\eqref{eq:DxEF-leading-Z} is omitted.

The terminal cases follow from
\begin{equation*}
    D_x\bar y_{x,t}[w]
    =
    \lambda(t)\mathcal M_{0,0}[w],
    \qquad
    D_xw_i[w]=0.
\end{equation*}
Atoms of depth zero are covered by the nonrecursive rows.
At greater depth, apply the induction hypothesis to each
differentiated insertion $U_j$.
For a generated term $d(t)\mathcal V$, write
\begin{equation*}
    \mathcal V
    =
    \left(\prod_\nu\mathcal S_\nu\right)\mathcal W,
\end{equation*}
where $\mathcal W$ is a vector atom or the posterior mean.
Reinsertion gives
\begin{equation*}
    \langle Z,d(t)\mathcal V\rangle
    =
    d(t)
    \left(\prod_\nu\mathcal S_\nu\right)
    \langle Z,\mathcal W\rangle.
\end{equation*}
The scalar factors can be taken outside the outer expectation.
The changes in the total insertion count and radius degree
are therefore exactly those of the differentiated insertion,
and the generated coefficient remains $d(t)$.
This completes the insertion-depth induction.

The product rule extends these estimates to weighted
monomials. Spatial differentiation does not act on their
time-dependent coefficients, and the bounds on the new
coefficients are preserved under multiplication.
Using \eqref{eq:weighted-class-monotonicity}, we obtain
\begin{equation}
    D_x[\cdot][w_j]:
    \mathfrak W^{\mathbf w}_{\sigma,\eta,\rho}
    \longrightarrow
    \mathfrak W^{\mathbf w}_{\sigma+1,\eta+1,\rho+1}.
    \label{eq:higher-order-unit-direction-rule}
\end{equation}

We now differentiate successively in the fixed directions
$w_1,\ldots,w_n$, using each direction label once.
Repeated application of
\eqref{eq:higher-order-unit-direction-rule} gives
\begin{equation}\label{eq:sec3-4 D_x^n dt^q}
    D_x^n\partial_t^q\mathcal B(t,x)
    [w_1,\ldots,w_n]
    \in
    \mathfrak W^{\mathbf w}_{
        \sigma+n,\,
        \eta+2q+n,\,
        \rho+2q+n}.
\end{equation}

We retain the direction labels in this finite atom expansion.

\emph{Claim: every monomial in the expansion
\eqref{eq:sec3-4 D_x^n dt^q} contains each direction label
$w_1,\ldots,w_n$ exactly once, and its time-dependent coefficient is
independent of the direction vectors.}
Indeed, the initial field $\partial_t^q\mathcal B$ contains none of
these labels. Each new differentiation introduces its direction
exactly once, through a factor $\langle Z,w_j\rangle$, possibly inside
a nested atom, and the product rule together with the four mechanisms
of Table~\ref{tab:fixed-direction-differentiation} preserves the
occurrences of all earlier labels in each nonzero term. The
coefficients arise only from $c(t)$, $\lambda(t)$, and the
coefficients of $\hat s$, none of which involves $\mathbf w$.

We next substitute
$
    w_{\ell+1}=\cdots=w_{\ell+k}=x
$
in the atom expansion~\eqref{eq:sec3-4 D_x^n dt^q}, after all derivatives have been taken.
Each substitution changes one factor
$\langle Z,w_j\rangle$ into $\langle Z,x\rangle$.
At the atom level, this is the identity
\begin{equation*}
    \left.
        \mathcal M_{a,m}[w_j,U_1,\ldots,U_s]
    \right|_{w_j=x}
    =
    \mathcal M_{a+1,m}[U_1,\ldots,U_s],
\end{equation*}
and the same identity holds for scalar atoms.
It also applies inside nested atoms.

The substitution increases $K$ by 1.
The number of ordinary insertions decreases by 1,
while the contribution $2K$ to the radius degree increases
by 2. Hence the total radius degree increases by 1.
Since the coefficient is unchanged, the residual decay
index decreases by 1, and the singularity index is
unchanged. For each term under consideration, the weight
change is therefore
\begin{equation}
    (K,\sigma',\eta',\rho')
    \longmapsto
    (K+1,\sigma'-1,\eta',\rho'+1).
    \label{eq:higher-order-x-direction-rule}
\end{equation}

Each of the $k$ substituted labels occurs exactly once.
Applying \eqref{eq:higher-order-x-direction-rule} to these
labels therefore yields
\begin{equation*}
\begin{aligned}
    D_x^{k+\ell}\partial_t^q\mathcal B(t,x)
    \left[
        w_1,\ldots,w_\ell,
        \underbrace{x,\ldots,x}_{k\ \textrm{ times}}
    \right]\in
    \mathfrak W^{(w_1,\ldots,w_\ell)}_{
        \sigma+\ell,\,
        \eta+2q+k+\ell,\,
        \rho+2q+2k+\ell}.
\end{aligned}
\end{equation*}
Here only the remaining fixed directions occur as additional
insertions; the substituted directions have become normal
factors in the original atom construction.

Applying the weighted pointwise estimate~\eqref{eq:weighted-class-pointwise-bound} proves
\eqref{eq:higher-order-mixed-directional-bound}, using
commutation of time and spatial derivatives and symmetry
of the spatial derivative slots.
The constants are uniform over $|w_1|,\ldots,|w_\ell|\leq1$.
Taking $k=0$ and the supremum over these directions gives
\eqref{eq:higher-order-operator-bound}.
\end{proof}

We now return to the qualitative decomposition~\eqref{eq:deco for L^j x} and use the weighted differentiation rules to quantify its time-dependent coefficients and centered remainder.

\begin{proposition}[Quantitative normal-form decomposition]
\label{prop:quantitative-normal-form-decomposition}
The decomposition \eqref{eq:deco for L^j x} may be chosen so that, for every
$j\geq1$ and $q\in\mathbb N_0$,
\begin{equation}\label{eq:Aj-Atilde-estimates}
\begin{aligned}
|\partial_t^qA_j(t)|
&\leq
C_{j,q}e^{-t}b(t)^{j+q},
\\
|\partial_t^q\widetilde A_j(t)|
&\leq
C_{j,q}e^{-t/2}b(t)^{j+q}.
\end{aligned}
\end{equation}
Moreover,
\begin{equation*}
R_1=0,
\qquad
R_j\in
\mathfrak W_{1,2j-1,2j-1}
\quad
\text{for }j\geq2.
\end{equation*}
The coefficient bounds, including those in the weighted
representation of $R_j$, depend only on $j$ and the order
of time differentiation.
\end{proposition}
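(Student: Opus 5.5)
We argue by induction on $j$. The decomposition is built by applying $L$ term by term to $L^jx=A_jx+\widetilde A_j\bar y_{x,t}+R_j$ and redistributing the output among the three slots. For the base case, $Lx=\frac12\hat s=\frac12 a(t)x+\frac12\lambda(t)\bar y_{x,t}$. This gives $A_1=a/2$, $\widetilde A_1=\lambda/2$ and $R_1=0$, and \eqref{eq:partial t estimate for a and lambda} yields \eqref{eq:Aj-Atilde-estimates} with exponent $j=1$.

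For the inductive step we compute $L$ of each of the three terms. For the explicit term, $L(A_jx)=-A_j'x+\frac12A_j\hat s$, which equals
\begin{equation*}
\bigl(-A_j'+\tfrac12 aA_j\bigr)x+\tfrac12\lambda A_j\,\bar y_{x,t}.
\end{equation*}
For the posterior-mean term, $L(\widetilde A_j\bar y_{x,t})=-\widetilde A_j'\bar y_{x,t}+\widetilde A_jL\bar y_{x,t}$. The centered remainder contributes $LR_j$. We therefore set
\begin{equation*}
A_{j+1}:=-A_j'+\tfrac12aA_j,\qquad \widetilde A_{j+1}:=-\widetilde A_j'+\tfrac12\lambda A_j,\qquad R_{j+1}:=\widetilde A_jL\bar y_{x,t}+LR_j.
\end{equation*}
The coefficient bounds then follow directly. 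Each $t$-derivative costs one power of $b$. Multiplying by $a$ costs $e^{-t}b$ and multiplying by $\lambda$ costs $e^{-t/2}b$. Since $b\ge1$, $A_{j+1}$ obeys $e^{-t}b^{j+1+q}$. For $\widetilde A_{j+1}$, the term $\lambda A_j$ carries $e^{-3t/2}\le e^{-t/2}$, so $\widetilde A_{j+1}$ obeys $e^{-t/2}b^{j+1+q}$.

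For the remainder we use \eqref{eq:L-ybar-weighted}, namely $L\bar y_{x,t}\in\mathfrak W_{0,2,3}$. Multiplying by $\widetilde A_j$ multiplies each monomial coefficient by a factor with profile $e^{-t/2}b^{j+q}$; by Leibniz, the $q$-th derivative bound of the product has the form $e^{-(K+1)t/2}b^{2+j+q}$. Hence $\widetilde A_jL\bar y_{x,t}\in\mathfrak W_{1,j+2,3}$. Since $j+2\le 2j+1$ and $3\le 2j+1$ for $j\ge1$, monotonicity \eqref{eq:weighted-class-monotonicity} places it in $\mathfrak W_{1,2j+1,2j+1}$, which is the required class for $j+1$. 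In particular, for $j=1$ this gives $R_2\in\mathfrak W_{1,3,3}$ as claimed. For $j\ge2$ the induction hypothesis gives $R_j\in\mathfrak W_{1,2j-1,2j-1}$, and \eqref{eq:count rule for L} yields $LR_j\in\mathfrak W_{1,2j+1,2j+1}$. The sum of the two pieces stays in the class.

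The main point to check is that the residual decay index $\sigma=1$ is not lost. The rule \eqref{eq:count rule for L} preserves $\sigma$, but only because the time-differentiation rows have $\Delta\sigma\ge0$. The single source of $\sigma$ is the factor $e^{-t/2}$ in $\widetilde A_j$, and this is why the $\bar y$-coefficient must carry exactly $e^{-t/2}$ and not less. A further check is that absorbing $\frac12\lambda A_j\bar y_{x,t}$ into $\widetilde A_{j+1}$, rather than into $R_{j+1}$, is consistent with the claimed bounds; it is, because that term carries the stronger decay $e^{-3t/2}$. Finally, the constants depend only on $j$ and $q$ through the finitely many Leibniz expansions, which gives the last sentence of the statement.
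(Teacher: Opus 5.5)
Your proposal is correct and matches the paper's proof essentially step for step. You use the same recursions $A_{j+1}=-A_j'+\frac12aA_j$, $\widetilde A_{j+1}=-\widetilde A_j'+\frac12\lambda A_j$ and $R_{j+1}=LR_j+\widetilde A_jL\bar y_{x,t}$, the same Leibniz coefficient bounds, and the same combination of $L\bar y_{x,t}\in\mathfrak W_{0,2,3}$, the $L$-rule and monotonicity to place $R_{j+1}$ in $\mathfrak W_{1,2j+1,2j+1}$.
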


\begin{proof}
At level $j=1$,
$A_1=a/2$, $\widetilde A_1=\lambda/2$, and $R_1=0$.
Applying $L$ to \eqref{eq:deco for L^j x} gives
\begin{equation}\label{eq:normal-form-recursions}
\begin{aligned}
    A_{j+1}
    &=
    -A_j'+\frac12aA_j,
    \\
    \widetilde A_{j+1}
    &=
    -\widetilde A_j'
    +
    \frac12\lambda A_j,
    \\
    R_{j+1}
    &=
    LR_j
    +
    \widetilde A_jL\bar y_{x,t}.
\end{aligned}
\end{equation}
Suppose the coefficient estimates hold at order $j$ for
every $q\in\mathbb N_0$. The product rule and
\eqref{eq:partial t estimate for a and lambda} give
\begin{equation*}
\begin{aligned}
    |\partial_t^q(aA_j)(t)|
    &\leq
    C_{j,q}e^{-2t}b(t)^{j+q+1},
    \\
    |\partial_t^q(\lambda A_j)(t)|
    &\leq
    C_{j,q}e^{-3t/2}b(t)^{j+q+1}.
\end{aligned}
\end{equation*}
Together with the induction bounds for $A_j'$ and
$\widetilde A_j'$, the first two recursions in
\eqref{eq:normal-form-recursions} prove
\eqref{eq:Aj-Atilde-estimates} at order $j+1$.

For the centered remainder,
\eqref{eq:L-ybar-weighted} and
\eqref{eq:Aj-Atilde-estimates} imply
$
    \widetilde A_jL\bar y_{x,t}
    \in
    \mathfrak W_{1,j+2,3}.
$
In particular,
$R_2\in\mathfrak W_{1,3,3}$.
Now suppose that
$R_j\in\mathfrak W_{1,2j-1,2j-1}$ for some $j\geq2$.
The material differentiation rule
\eqref{eq:count rule for L} gives
\begin{equation*}
    LR_j
    \in
    \mathfrak W_{1,2j+1,2j+1}.
\end{equation*}
Also, by \eqref{eq:weighted-class-monotonicity},
\begin{equation*}
    \widetilde A_jL\bar y_{x,t}
    \in
    \mathfrak W_{1,j+2,3}
    \subseteq
    \mathfrak W_{1,2j+1,2j+1},
\end{equation*}
since $j+2\leq2j+1$ and $3\leq2j+1$.
The last recursion in \eqref{eq:normal-form-recursions}
therefore proves the remainder estimate at order $j+1$.

The atom structure produced by this recursion depends only
on $j$. All time-dependent coefficients are generated from
$a$, $\lambda$, and their derivatives, so their bounds
depend only on $j$ and the number of time derivatives.
\end{proof}
Combining this decomposition with
\eqref{eq:weighted-class-pointwise-bound} and
\eqref{eq:higher-order-operator-bound} yields the main regularity result of this section.
The centered normal form propagates the geometric control
of posterior fluctuations to arbitrary material order,
giving global pointwise bounds for $L^jx$ and spatially
uniform bounds for $D_xL^jx$ directly from the support
and doubling assumptions on the possibly singular target $P_0$.
The constants are independent of dimension, and the exponential
time weights yield integrable spatial Lipschitz bounds on
$[\delta,\infty)$ for every $\delta>0$.
These estimates complete the regularity analysis needed
for the subsequent high-order convergence bounds.

\begin{theorem}[Weighted estimates for flow-generated fields]
\label{thm:weighted-estimates-Ljx}
Suppose that $P_0$ is supported in $B_R(0)$ and satisfies
Assumption~\ref{ass:uniform-edge-doubling}. For every integer $j\geq1$, every $t>0$, and every
$x\in\mathbb R^d$,
\begin{align}
    &|L^jx(t,x)|
    \leq
    C_j
    \left[
        e^{-t}(1-e^{-t})^{-j}|x|
        +
        e^{-t/2}
        \left(
            \frac{1+R}{1-e^{-t}}
        \right)^{2j-1}
    \right],
    \label{eq:truncation j-th estimate}
    \\
    &\left\lVert{}D_x(L^jx)(t,x)\right\rVert_{\mathrm{op}}
    \leq
    C_je^{-t}
    \left(
        \frac{1+R}{1-e^{-t}}
    \right)^{2j}.
    \label{eq:j-th lipschitz estimate}
\end{align}
The constant $C_j$ depends only on $j$ and the doubling
constant $D_{\mathrm{UD}}$; in particular, it is independent
of the ambient dimension.

If $t\geq\delta>0$, the same bounds hold with
$(1-e^{-t})^{-1}$ replaced by $(1-e^{-\delta})^{-1}$. In particular, for $0<\delta\leq1$ and $t\geq\delta$,
\begin{align}
    &|L^jx(t,x)|
    \leq
    C_j
    \left[
        e^{-t}\delta^{-j}|x|
        +
        e^{-t/2}
        \left(
            \frac{1+R}{\delta}
        \right)^{2j-1}
    \right],
    \label{eq:truncation j-th estimate+early stopping}
    \\
    &\left\lVert{}D_x(L^jx)(t,x)\right\rVert_{\mathrm{op}}
    \leq
    C_je^{-t}
    \left(
        \frac{1+R}{\delta}
    \right)^{2j}.
    \label{eq:j-th lipschitz estimate+early stopping}
\end{align}
\end{theorem}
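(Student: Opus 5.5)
The plan is to read both estimates off the quantitative decomposition of Proposition~\ref{prop:quantitative-normal-form-decomposition},
\[
L^jx=A_j(t)x+\widetilde A_j(t)\bar y_{x,t}+R_j(t,x),
\]
bounding each of the three pieces separately.

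\emph{Estimate \eqref{eq:truncation j-th estimate}.}
\begin{itemize}
\item[(i)] By \eqref{eq:Aj-Atilde-estimates} with $q=0$, $|A_j x|\le C_je^{-t}b(t)^j|x|$.
\item[(ii)] Since $|\bar y_{x,t}|\le R$, $|\widetilde A_j\bar y_{x,t}|\le C_je^{-t/2}b(t)^jR$.
\item[(iii)] For $j\ge2$, $R_j\in\mathfrak W_{1,2j-1,2j-1}$, so \eqref{eq:weighted-class-pointwise-bound} gives $|R_j|\le Ce^{-t/2}b(t)^{2j-1}(1+R)^{2j-1}$.
\end{itemize}
Because $b\ge1$ and $1+R\ge1$, term (ii) is dominated by $e^{-t/2}\bigl((1+R)b\bigr)^{2j-1}$ whenever $j\le 2j-1$, which holds for all $j\ge1$. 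For $j=1$ we have $R_1=0$, and term (ii) is already of the required form $e^{-t/2}(1+R)b$. Since $b(t)=(1-e^{-t})^{-1}$, this proves \eqref{eq:truncation j-th estimate}.

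\emph{Estimate \eqref{eq:j-th lipschitz estimate}.} Differentiating the decomposition gives
\[
D_x(L^jx)=A_jI+\widetilde A_j\mathsf K_{x,t}+D_xR_j .
\]
\begin{itemize}
\item[(i)] $\|A_jI\|_{\mathrm{op}}\le C_je^{-t}b^j$.
\item[(ii)] $\mathsf K_{x,t}=\lambda\,\mathbb E[Z\otimes Z]$ has operator norm at most $\lambda(2R)^2$. Hence $\|\widetilde A_j\mathsf K_{x,t}\|_{\mathrm{op}}\le Ce^{-t/2}b^j\cdot e^{-t/2}b\,R^2=Ce^{-t}b^{j+1}R^2$. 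This is at most $Ce^{-t}\bigl((1+R)b\bigr)^{2j}$ since $j+1\le 2j$ and $2\le2j$.
\item[(iii)] Apply \eqref{eq:higher-order-operator-bound} with $q=0$, $\ell=1$ to $R_j\in\mathfrak W_{1,2j-1,2j-1}$. This gives $\|D_xR_j\|_{\mathrm{op}}\le Ce^{-(1+1)t/2}b^{2j}(1+R)^{2j}=Ce^{-t}\bigl((1+R)b\bigr)^{2j}$, exactly the claimed form.
\end{itemize}
The exponent bookkeeping $e^{-(\sigma+\ell)t/2}$ with $\sigma=\ell=1$ is precisely what produces the factor $e^{-t}$. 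For $j=1$ only pieces (i) and (ii) are present.

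\emph{Dimension independence.} All constants come from Lemma~\ref{lem:size-centered-atom} and Lemma~\ref{lem:sec3-weighted-directional-bound}. They depend only on the algebraic structure of the chosen representation, which depends only on $j$, and on $D_{\mathrm{UD}}$. The operator-norm bound in (iii) is obtained as a supremum over unit directions $w$ with uniform constants, so no factor of $d$ enters. The only potentially delicate point is to make sure no term carries a hidden $|x|$ growth other than $A_jx$. This is guaranteed because $x$ appears in $R_j$ only through the centered insertions $\langle Z,x\rangle$, which are controlled by Theorem~\ref{thm:sec-3-normal-fluctuation-two-scale}.

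\emph{Early-stopping versions.} Since $b(t)=(1-e^{-t})^{-1}$ is decreasing, for $t\ge\delta$ we have $b(t)\le(1-e^{-\delta})^{-1}$, which gives the first replacement claim. For $0<\delta\le1$, $1-e^{-\delta}\ge(1-e^{-1})\delta$, so $(1-e^{-\delta})^{-1}\le e\delta^{-1}/(e-1)$. The numerical factor raised to powers at most $2j$ is absorbed into $C_j$, yielding \eqref{eq:truncation j-th estimate+early stopping} and \eqref{eq:j-th lipschitz estimate+early stopping}.

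The only nontrivial step is checking the index arithmetic when absorbing the lower-order pieces into the dominant powers; everything else is direct substitution into earlier results.
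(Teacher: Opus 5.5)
Your proposal is correct and follows the paper's proof essentially step for step. It uses the same three-piece decomposition from Proposition~\ref{prop:quantitative-normal-form-decomposition}, the same absorption $Rb^j\le(1+R)^{2j-1}b^{2j-1}$, the operator bound \eqref{eq:higher-order-operator-bound} with $q=0$, $\ell=1$ applied to $R_j\in\mathfrak W_{1,2j-1,2j-1}$, and the same monotonicity argument for early stopping. The only cosmetic difference is in the $\widetilde A_j D_x\bar y_{x,t}$ term: you bound it directly via $\lVert\mathsf K_{x,t}\rVert_{\mathrm{op}}\le 4\lambda(t)R^2$, whereas the paper applies the directional lemma to $\bar y_{x,t}\in\mathfrak W_{0,0,1}$. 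Both routes give the same $e^{-t}b(t)^{j+1}(1+R)^2$ contribution.
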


In contrast with the bounded-field setting of classical ODE-solver
analysis, the growth in $|x|$ is confined to a single affine term with
an explicit decaying coefficient, while all spatial derivatives are
bounded uniformly in $x$; the convergence analysis of
Section~\ref{sec:high-order-convergence} is built on this splitting.

\begin{proof}
We start from the decomposition \eqref{eq:deco for L^j x} given by Lemma~\ref{lem:centered-moment-normal-form-Ljx},
\begin{equation*}
L^jx
=
A_j(t)x
+
\widetilde A_j(t)\bar y_{x,t}
+
R_j(t,x),
\end{equation*}
Proposition~\ref{prop:quantitative-normal-form-decomposition} and
\eqref{eq:weighted-class-pointwise-bound} give
\begin{equation*}
|R_j(t,x)|
\leq
C_je^{-t/2}b(t)^{2j-1}(1+R)^{2j-1},
\end{equation*}
where the statement is trivial for $j=1$. Since $|\bar y_{x,t}|\leq R$, the coefficient estimates
\eqref{eq:Aj-Atilde-estimates} imply
\begin{equation*}
\begin{aligned}
    |L^jx(t,x)|
    \leq C_j\bigl[
    &e^{-t}b(t)^j|x|
        +
        e^{-t/2}Rb(t)^j+
        e^{-t/2}b(t)^{2j-1}(1+R)^{2j-1}
    \bigr].
\end{aligned}
\end{equation*}
For $j\geq1$, we have $j\leq2j-1$ and $b(t)\geq1$, so
$
    Rb(t)^j
    \leq
    (1+R)^{2j-1}b(t)^{2j-1}.
$
Absorbing the posterior-mean term into the remainder bound
proves \eqref{eq:truncation j-th estimate}.

For the spatial derivative, apply
\eqref{eq:higher-order-operator-bound} with $q=0$ and
$\ell=1$ to
$R_j\in\mathfrak W_{1,2j-1,2j-1}$ for $j\geq2$
and to $\bar y_{x,t}\in\mathfrak W_{0,0,1}$. This gives
\begin{equation*}
\begin{aligned}
    \sup_{|w|\leq1}|D_xR_j(t,x)[w]|
    &\leq
    C_je^{-t}b(t)^{2j}(1+R)^{2j},
    \\
    \sup_{|w|\leq1}|D_x\bar y_{x,t}[w]|
    &\leq
    Ce^{-t/2}b(t)(1+R)^2.
\end{aligned}
\end{equation*}
Differentiating the normal-form decomposition,
\begin{equation*}
    D_x(L^jx)[w]
    =
    A_jw
    +
    \widetilde A_jD_x\bar y_{x,t}[w]
    +
    D_xR_j[w].
\end{equation*}
Taking the supremum over $|w|\leq1$ and using
\eqref{eq:Aj-Atilde-estimates}, we obtain
\begin{equation*}
\begin{aligned}
    \lVert{}D_x(L^jx)(t,x)\rVert_{\mathrm{op}}
    \leq C_je^{-t}\bigl[
        &b(t)^j
        +
        b(t)^{j+1}(1+R)^2
        \\
        &+
        b(t)^{2j}(1+R)^{2j}
    \bigr].
\end{aligned}
\end{equation*}
Since $b(t)\geq1$, $1+R\geq1$, and $j+1\leq2j$,
the last term controls the first two. This proves
\eqref{eq:j-th lipschitz estimate}.

Finally, for $t\geq\delta>0$,
$
    b(t)\leq b(\delta).
$
For $0<\delta\leq1$, the inequality
$1-e^{-\delta}\geq(1-e^{-1})\delta$ further gives
$
    b(\delta)
    \leq
    1/{\delta (1-e^{-1})}.
$
Substitution into the preceding estimates proves
\eqref{eq:truncation j-th estimate+early stopping}
and \eqref{eq:j-th lipschitz estimate+early stopping}.
\end{proof}

\section{Convergence of higher-order schemes}
\label{sec:high-order-convergence}

We now apply the regularity estimates of
Section~\ref{sec:sec3-centered-posterior-moment-normal-form}
to the convergence of higher-order discretizations of the reverse
heat flow.
We first establish Wasserstein bounds for the truncated
Taylor scheme, then total-variation bounds for both Taylor
and fixed explicit Runge--Kutta schemes.
The resulting estimates also give the number of time steps
needed to guarantee a prescribed accuracy of the associated flow-based generative models.

Throughout this section, the schemes use the exact modified
score $\hat s$. The global bounds account for both Gaussian
initialization and numerical discretization.

\subsection{Wasserstein convergence of the truncated Taylor scheme}
\label{sec:global-convergence-taylor}

We couple the numerical trajectory with the exact reverse
probability flow.
Theorem~\ref{thm:weighted-estimates-Ljx} controls the local
Taylor remainder and the Lipschitz constant of each numerical
step. A discrete Gr\"onwall argument then combines these bounds
into a global $W_2$ error estimate.

Fix an integer $p\geq1$, an early-stopping time
$0<\delta\leq1$, and a terminal forward time $T>\delta$.
Write
\begin{equation*}
    b(t):=(1-e^{-t})^{-1},
    \qquad
    \Theta_R(t):=(1+R)b(t),
    \qquad
    \Lambda_{R,\delta}
    :=
    \frac{1+R}{\delta},
\end{equation*}
so that $\Theta_R$ is decreasing and
$\Theta_R(t)\le\Theta_R(\delta)\le(1-e^{-1})^{-1}\Lambda_{R,\delta}$
for $t\ge\delta$.
For a positive integer $N$, define the uniform reverse-time grid
\begin{equation*}
    h=\frac{T-\delta}{N},
    \qquad
    u_n=nh,
    \qquad
    t_n=T-u_n,
    \qquad
    n=0,\ldots,N.
\end{equation*}
Thus $u_n$ increases from $0$ to $T-\delta$, while the
corresponding forward diffusion time $t_n$ decreases from
$T$ to $\delta$.

Let $(Y_u)$ be the exact reverse flow in
\eqref{eq:backward-ode}, initialized with $Y_0\sim P_T$.
Its marginals satisfy $Y_{u_n}\sim P_{t_n}$, and in particular
$Y_{u_N}\sim P_\delta$.
Using the Taylor map defined in \eqref{eq:taylor-scheme},
let
\begin{equation*}
    Y_{n+1}^h
    =
    \Phi_h^{(p)}(u_n,Y_n^h),
    \qquad
    n=0,\ldots,N-1,
\end{equation*}
with
\begin{equation*}
    Y_0^h\sim\gamma_d:=N(0,I_d),
    \qquad
    Q_n^{(p)}:=\mathcal L(Y_n^h).
\end{equation*}

Finally, we define the moment bound,
\begin{equation*}
    \mathfrak m_\delta
    :=
    \sup_{\delta\leq t\leq T}
    \left(
        \int_{\mathbb R^d}|x|^2\,P_t(dx)
    \right)^{1/2},
\end{equation*}
which controls the term proportional to $|x|$
in \eqref{eq:truncation j-th estimate+early stopping},
applied with $j=p+1$ along the exact trajectory.

\begin{theorem}[Global convergence of the truncated Taylor scheme]
\label{thm:global convergence truncated Taylor}
Suppose that $P_0$ is supported in $B_R(0)$ and satisfies
Assumption~\ref{ass:uniform-edge-doubling}. Let $p\geq1$ be an integer, let $0<\delta\leq1$, and assume
that the step size satisfies
\begin{equation}\label{eq:taylor-step-stability-condition}
    h\Lambda_{R,\delta}^2\leq1.
\end{equation}
Then
\begin{equation}\label{eq:global-Taylor-bound}
\begin{aligned}
W_2\bigl(P_\delta,Q_N^{(p)}\bigr)
\leq
C_p
\exp\left(
    C_p\frac{(1+R)^2}{\delta}
\right)
\Bigg[
    &e^{-T/2}(R^2+d)^{1/2}
    \\
    &+
    h^p
    \left(
        \delta^{-(p+1)}\mathfrak m_\delta
        +
        \Lambda_{R,\delta}^{2p+1}
    \right)
\Bigg],
\end{aligned}
\end{equation}
where $C_p$ depends only on $p$ and $D_{\mathrm{UD}}$.

Moreover,
$\mathfrak m_\delta\leq R+\sqrt d$, and hence
\begin{equation}\label{eq:bounded support global Taylor error}
    W_2\bigl(P_\delta,Q_N^{(p)}\bigr)
    \leq
    C_{p,R,\delta}
    \left[
        e^{-T/2}(R^2+d)^{1/2}
        +
        h^p(1+\sqrt d)
    \right],
\end{equation}
where $C_{p,R,\delta}$ may also depend on $D_{\mathrm{UD}}$ but is independent of
the ambient dimension.

\end{theorem}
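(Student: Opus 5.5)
We plan to couple the numerical and exact trajectories and close with a discrete Gr\"onwall argument. First, couple $Y_0\sim P_T$ with $Y_0^h\sim\gamma_d$ optimally in $W_2$. By the Gaussian smoothing structure, $W_2(P_T,\gamma_d)\le W_2(\mathcal L(e^{-T/2}X_0+\sqrt{1-e^{-T}}Z),\mathcal L(Z))\le e^{-T/2}R+(1-\sqrt{1-e^{-T}})\sqrt d\lesssim e^{-T/2}(R^2+d)^{1/2}$, using the synchronous coupling with the same $Z$ and $1-\sqrt{1-s}\le s$. Writing $e_n:=\|Y_{u_n}-Y_n^h\|_{L^2}$, the one-step decomposition is
\begin{equation*}
Y_{u_{n+1}}-Y_{n+1}^h
=
\bigl[\Phi_h^{(p)}(u_n,Y_{u_n})-\Phi_h^{(p)}(u_n,Y_n^h)\bigr]
+R^x_{p+1}(u_n,h),
\end{equation*}
by Proposition~\ref{prop:taylor-flow} and \eqref{eq:taylor-x}.

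For stability, we bound the Lipschitz constant of $\Phi_h^{(p)}(u_n,\cdot)$ via $D_x\Phi_h^{(p)}=I+\sum_{j=1}^p\frac{h^j}{j!}D_x(L^jx)$ and \eqref{eq:j-th lipschitz estimate}, with $t\in[t_{n+1},t_n]$. This gives
\[
\mathrm{Lip}\le 1+\sum_{j=1}^p\frac{h^j}{j!}C_je^{-t_n}\Theta_R(t_n)^{2j}.
\]
The step condition \eqref{eq:taylor-step-stability-condition} gives $h\Theta_R(t_n)^2\lesssim1$. Hence $h^j\Theta_R^{2j}\le C h\Theta_R(t_n)^2$, and the Lipschitz constant is at most $1+C_ph\,e^{-t_n}\Theta_R(t_n)^2$. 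The product over $n$ is then bounded by $\exp(C_p\sum_n h e^{-t_n}(1+R)^2b(t_n)^2)$. We compare this sum with $\int_\delta^T e^{-t}(1-e^{-t})^{-2}dt\le (e^{\delta}-1)^{-1}\lesssim\delta^{-1}$, which yields the factor $\exp(C_p(1+R)^2/\delta)$. The comparison needs care at the last step: $b(t_n)$ versus $b$ on $[t_{n+1},t_n]$ differ by a bounded factor because $h\le\delta^2/(1+R)^2\le\delta$, and $b(t)/b(t+h)$ is then bounded for $t\ge\delta$.

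For consistency, Minkowski's inequality on the integral remainder gives $\|R^x_{p+1}(u_n,h)\|_{L^2}\le \frac{h^{p+1}}{(p+1)!}\sup_{r\le h}\|L^{p+1}x(T-u_n-r,Y_{u_n+r})\|_{L^2}$. Along the exact flow $Y_{u}\sim P_{T-u}$, so \eqref{eq:truncation j-th estimate+early stopping} with $j=p+1$ bounds this by $C_ph^{p+1}\bigl[e^{-t}\delta^{-(p+1)}\mathfrak m_\delta+e^{-t/2}\Lambda_{R,\delta}^{2p+1}\bigr]$. Summing the local errors against the propagated Lipschitz products gives
\[
e_N\le e^{C_p(1+R)^2/\delta}\Bigl(e_0+\sum_n h^{p+1}(\cdots)\Bigr).
\]
Summing $e^{-t_n/2}$ over the grid is $O(1/h)$ uniformly in $T$, which gives the $h^p$ term in \eqref{eq:global-Taylor-bound}. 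Since $W_2(P_\delta,Q_N^{(p)})\le e_N$, this proves the first claim.

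For \eqref{eq:bounded support global Taylor error}, we note that $\mathfrak m_\delta\le R+\sqrt d$, because $\|X_t\|_{L^2}\le e^{-t/2}R+\sqrt{1-e^{-t}}\sqrt d$; we then absorb the $R,\delta$ factors into the constant. The main obstacle is the stability product. There one must verify that the higher-order terms $h^j\Theta_R^{2j}$ are absorbed using only the step condition, and that the accumulated exponent stays $T$-uniform. This relies crucially on the $e^{-t}$ decay in \eqref{eq:j-th lipschitz estimate}.
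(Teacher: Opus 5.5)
Your proposal is correct and follows essentially the paper's route. It uses the optimal coupling at time $T$, the one-step Lipschitz bound $1+C_p h\,e^{-t_n}\Theta_R(t_n)^2$ obtained from \eqref{eq:j-th lipschitz estimate} and the step condition, a discrete Gr\"onwall argument whose exponent is controlled by $\int_\delta^T e^{-t}b(t)^2\,dt\lesssim\delta^{-1}$, and Minkowski together with \eqref{eq:truncation j-th estimate+early stopping} to bound the defects, summed uniformly in $T$. The differences are cosmetic: you use a synchronous coupling for $e_0$ instead of OU contraction, and a geometric sum instead of an integral for the defects. The ``care'' you flag in the Riemann-sum comparison is not needed, because $t_n$ is the larger endpoint of $[t_{n+1},t_n]$ and $e^{-t}b(t)^2$ is decreasing, so the comparison follows from monotonicity alone.
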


\begin{proof}
Choose an optimal coupling of
$Y_0\sim P_T$ and $Y_0^h\sim\gamma_d$.
Evolve $Y_u$ by the exact reverse flow and $Y_n^h$ by
the Taylor scheme on the same probability space, and set
\begin{equation*}
    e_n
    :=
    \left\lVert{}Y_{u_n}-Y_n^h\right\rVert_{L^2}.
\end{equation*}
Then $e_0=W_2(P_T,\gamma_d)$.
We first estimate the local Taylor remainder along the
exact trajectory, then control the propagation of the
coupled error.

\emph{Local truncation error.}
The integral Taylor expansion \eqref{eq:taylor-x} gives
\begin{equation*}
    Y_{u_{n+1}}
    =
    \Phi_h^{(p)}(u_n,Y_{u_n})
    +
    R_{p+1}^x(u_n,h),
\end{equation*}
where
\begin{equation*}
    R_{p+1}^x(u_n,h)
    =
    \int_0^h
    \frac{(h-r)^p}{p!}
    (L^{p+1}x)(t_n-r,Y_{u_n+r})\,dr.
\end{equation*}
For $0\leq r\leq h$, the corresponding forward time
satisfies
$
    t_n-r\in[t_{n+1},t_n]\subseteq[\delta,T].
$
Since $Y_{u_n+r}\sim P_{t_n-r}$, the definition of
$\mathfrak m_\delta$ gives
$
    \lVert{}Y_{u_n+r}\rVert_{L^2}
    \leq
    \mathfrak m_\delta.
$

Apply \eqref{eq:truncation j-th estimate+early stopping}
with $j=p+1$ and use Minkowski's inequality.
Bounding $(h-r)^p$ by $h^p$ and changing variables to
the forward time $t=t_n-r$ then gives
\begin{equation}\label{eq:local-Taylor-error-global-section}
\begin{aligned}
    \left\lVert{}R_{p+1}^x(u_n,h)\right\rVert_{L^2}
    \leq
    C_ph^p
    \int_{t_{n+1}}^{t_n}
    \left[
        e^{-t}\delta^{-(p+1)}\mathfrak m_\delta
        +
        e^{-t/2}\Lambda_{R,\delta}^{2p+1}
    \right]dt.
\end{aligned}
\end{equation}
The intervals $[t_{n+1},t_n]$ partition $[\delta,T]$.
Summing \eqref{eq:local-Taylor-error-global-section}
therefore yields
\begin{equation}\label{eq:sum-local-Taylor-error}
\begin{aligned}
    \sum_{n=0}^{N-1}
    \left\lVert{}R_{p+1}^x(u_n,h)\right\rVert_{L^2}
    &\leq
    C_ph^p
    \int_\delta^T
    \left[
        e^{-t}\delta^{-(p+1)}\mathfrak m_\delta
        +
        e^{-t/2}\Lambda_{R,\delta}^{2p+1}
    \right]dt
    \\
    &\leq
    C_ph^p
    \left[
        \delta^{-(p+1)}\mathfrak m_\delta
        +
        \Lambda_{R,\delta}^{2p+1}
    \right].
\end{aligned}
\end{equation}
The two time integrals are bounded by $1$ and $2$,
respectively. Thus summing the local defects introduces
no additional factor growing with $T$.

\emph{One-step stability.}
By \eqref{eq:j-th lipschitz estimate} and the
definition of the Taylor map, for every $x,y\in\mathbb R^d$,
\begin{equation*}
    |\Phi_h^{(p)}(u_n,x)-\Phi_h^{(p)}(u_n,y)|
    \leq
    \left[
        1+e^{-t_n}
        \sum_{j=1}^p\frac{C_j}{j!}
        \bigl(h\Theta_R(t_n)^2\bigr)^j
    \right]|x-y|.
\end{equation*}
Since $t_n\ge\delta$, the step-size condition
\eqref{eq:taylor-step-stability-condition} gives
$h\Theta_R(t_n)^2\le(1-e^{-1})^{-2}h\Lambda_{R,\delta}^2\le(1-e^{-1})^{-2}$,
hence
\begin{equation*}
    \sum_{j=1}^p\frac{C_j}{j!}
    \bigl(h\Theta_R(t_n)^2\bigr)^j
    \leq
    h\Theta_R(t_n)^2
    \sum_{j=1}^p\frac{C_j}{j!}(1-e^{-1})^{-2(j-1)}
    \leq C_p h\Theta_R(t_n)^2.
\end{equation*}
Consequently,
\begin{equation}
    |\Phi_h^{(p)}(u_n,x)-\Phi_h^{(p)}(u_n,y)|
    \leq
    \bigl(1+C_p h\,e^{-t_n}\Theta_R(t_n)^2\bigr)
    |x-y|.
\end{equation}

Subtracting the numerical update from the exact Taylor
expansion and taking $L^2$ norms therefore yields
\begin{equation}\label{eq:error-recursion-Taylor}
    e_{n+1}
    \leq
    \bigl(1+C_p h\,e^{-t_n}\Theta_R(t_n)^2\bigr)e_n
    +
    \lVert{}R_{p+1}^x(u_n,h)\rVert_{L^2}.
\end{equation}
This separates the propagation of the existing error from
the local defect introduced at the next step.

\emph{Accumulation of the error.}
Applying the discrete Gr\"onwall inequality to
\eqref{eq:error-recursion-Taylor} yields
\begin{equation*}
    e_N
    \leq
    \exp\left(
        C_p h
        \sum_{n=0}^{N-1}e^{-t_n}\Theta_R(t_n)^2
    \right)
    \left[
        e_0+
        \sum_{n=0}^{N-1}
        \lVert{}R_{p+1}^x(u_n,h)\rVert_{L^2}
    \right].
\end{equation*}
Since $t_{n+1}=t_n-h$ and $t\mapsto e^{-t}b(t)^2$ is decreasing,
\begin{equation*}
    h\sum_{n=0}^{N-1}e^{-t_n}b(t_n)^2
    \leq
    \sum_{n=0}^{N-1}
    \int_{t_{n+1}}^{t_n}e^{-t}b(t)^2\,dt
    =
    \int_\delta^T e^{-t}b(t)^2\,dt
    =
    b(\delta)-b(T)
    \leq
    \frac{1}{(1-e^{-1})\,\delta},
\end{equation*}
where we used $\frac{d}{dt}b(t)=-e^{-t}b(t)^2$ and
$1-e^{-\delta}\ge(1-e^{-1})\delta$ for $0<\delta\le1$.
Thus the accumulated stability factor is bounded by
$\exp\bigl(C_p(1+R)^2\delta^{-1}\bigr)$, independently
of $T$ and $N$. Combining this with
\eqref{eq:sum-local-Taylor-error} gives
\begin{equation}\label{eq:global strong error refined}
    e_N
    \leq
    C_p\exp\left(C_p(1+R)^2\delta^{-1}\right)
    \left[
        e_0+
        h^p\left(
            \delta^{-(p+1)}\mathfrak m_\delta
            +\Lambda_{R,\delta}^{2p+1}
        \right)
    \right].
\end{equation}

It remains to bound the initialization error. The OU
semigroup contracts $W_2$ and leaves $\gamma_d$ invariant, so
\begin{equation*}
    e_0
    =
    W_2(P_T,\gamma_d)
    \leq
    e^{-T/2}W_2(P_0,\gamma_d)
    \leq
    e^{-T/2}(R^2+d)^{1/2}.
\end{equation*}
For the last inequality, we couple $X_0\sim P_0$ independently
with $Z\sim\gamma_d$ and use
$\mathbb E|X_0-Z|^2=\mathbb E|X_0|^2+d\leq R^2+d$.

At the final time, $Y_{u_N}\sim P_\delta$ and
$Y_N^h\sim Q_N^{(p)}$. The coupling inequality and
\eqref{eq:global strong error refined} therefore imply
\begin{equation*}
\begin{aligned}
    W_2(P_\delta,Q_N^{(p)})
    &\leq e_N \\
    &\leq
    C_p\exp\left(C_p(1+R)^2\delta^{-1}\right)
    \left[
        e^{-T/2}(R^2+d)^{1/2}
        +
        h^p\left(
            \delta^{-(p+1)}\mathfrak m_\delta
            +\Lambda_{R,\delta}^{2p+1}
        \right)
    \right],
\end{aligned}
\end{equation*}
which proves \eqref{eq:global-Taylor-bound}.

Finally, the OU representation
$X_t=e^{-t/2}X_0+\sqrt{1-e^{-t}}\,Z$ gives
\begin{equation*}
    \left(\mathbb E|X_t|^2\right)^{1/2}
    \leq
    e^{-t/2}\left(\mathbb E|X_0|^2\right)^{1/2}
    +
    \sqrt{1-e^{-t}}\sqrt d
    \leq R+\sqrt d.
\end{equation*}
Hence $\mathfrak m_\delta\leq R+\sqrt d$. Substituting this
bound into \eqref{eq:global-Taylor-bound} and absorbing the
factors depending only on $p$, $R$, $\delta$, and
$D_{\mathrm{UD}}$ into $C_{p,R,\delta}$ proves
\eqref{eq:bounded support global Taylor error}.
\end{proof}

\subsection{Total-variation convergence of Taylor and Runge--Kutta schemes}
\label{sec:tv-comparison}

The Wasserstein analysis of
Section~\ref{sec:global-convergence-taylor} couples trajectories over
the whole reverse interval. Because $W_2$ is a metric on the state
space, each step multiplies the propagated error by the Lipschitz
constant of the step map, which produces the exponential prefactor in
Theorem~\ref{thm:global convergence truncated Taylor}. Total variation
is defined on measures alone: a common measurable pushforward does not
increase it, irrespective of the Lipschitz constant. We therefore
compare the laws transported by the exact and numerical \emph{one-step
maps}, bound their discrepancy by a change of variables along the
interpolation $y\mapsto y+\theta e(y)$
(Lemma~\ref{lem:endpoint-transport-bound}), and sum the local errors
along the reverse flow. The resulting prefactors are polynomial in
$\Lambda_{R,\delta}$.

The one-step viewpoint also covers explicit Runge--Kutta (RK) methods,
which evaluate only the velocity field and never form the material
derivatives $L^jx$ used by the Taylor scheme. For these we first
propagate the mixed directional estimates of
Lemma~\ref{lem:sec3-weighted-directional-bound} through the stages by
an affine-plus-remainder decomposition of each stage derivative
(Lemma~\ref{lem:weighted-rk-stage-regularity}); this is the step at
which a componentwise analysis would pay a factor of $d$ per stage, and
where the operator-norm form of
Lemma~\ref{lem:sec3-weighted-directional-bound} keeps the constants
dimension-free. The Taylor scheme requires only
Theorem~\ref{thm:weighted-estimates-Ljx}. Both local estimates are then
converted into one-step total-variation bounds and summed along the
reverse flow.

We continue to use $u$ for reverse time and $t$ for forward
OU time, and write
\begin{equation*}
    v(t,x):=\frac12\hat s(t,x),
\end{equation*}
and recall $b(t)=(1-e^{-t})^{-1}$ and $\Theta_R(t)=(1+R)b(t)$ from
Section~\ref{sec:global-convergence-taylor}.
For a step satisfying $0\le u<u+h\le T-\delta$, set
\begin{equation*}
    t:=T-u,
    \qquad
    s:=t-h,
    \qquad
    b_*:=b(s),
    \qquad
    \Theta_*:=\Theta_R(s).
\end{equation*}
Thus $t$ and $s$ are the forward times at the beginning and
end of the reverse step. The exact local reverse flow,
denoted by $\phi_{u,r}$ for $0\le r\le h$, satisfies
\begin{equation*}
    \partial_r\phi_{u,r}(x)
    =
    v\bigl(t-r,\phi_{u,r}(x)\bigr),
    \qquad
    \phi_{u,0}(x)=x.
\end{equation*}
Since $t-r\in[s,t]$, monotonicity gives
\begin{equation*}
    b(t-r)\le b_*,
    \qquad
    \Theta_R(t-r)\le\Theta_*,
    \qquad 0\le r\le h.
\end{equation*}

Throughout this subsection, $C_p$ denotes a generic finite
constant depending only on $p$, the doubling constant
$D_{\mathrm{UD}}$, and the fixed Runge--Kutta tableau when
applicable. We impose the one-step condition
\begin{equation}
    h\Theta_*^2\le c_p,
    \label{eq:section5-one-step-condition}
\end{equation}
where $c_p>0$ has the same parameter dependence as $C_p$ and is
decreased finitely many times in the proofs below; we keep the same
symbol throughout. In particular, these constants are independent of
$d$, $T$, $u$, $h$, and $N$.

Since $b_*\le\Theta_*^2$, this condition implies, for
$0\le r\le h$ and every $q\ge0$,
\begin{equation}
    r b_*^{q+1}\le c_p b_*^q,
    \qquad
    r\Theta_*^{q+2}\le c_p\Theta_*^q.
    \label{eq:section5-absorption}
\end{equation}
We will use these inequalities to control the additional
factors arising in the stage and flow estimates.

\subsubsection{Propagation of the weighted regularity through Runge--Kutta stages}

Differentiating a Runge--Kutta stage produces mixed time and
spatial derivatives of $v$, with derivatives of the stage
states inserted as spatial directions.
Lemma~\ref{lem:sec3-weighted-directional-bound} controls
contractions with the current evaluation point, which we
will use to bound the spatial growth of these terms.

Consider a fixed explicit $S$-stage Runge--Kutta method of
classical order $p$, with tableau $(a_{ij},b_i,c_i)$ and
stage nodes $c_i\in[0,1]$. For a reverse step starting at
$u$, write $t=T-u$ and define, for $0\le r\le h$,
\begin{align}
    X_i(r,x)
    &=x+r\sum_{j<i}a_{ij}K_j(r,x),
    \label{eq:rk-stage-interpolation-1}
    \\
    K_i(r,x)
    &=v\bigl(t-c_ir,X_i(r,x)\bigr),
    \qquad i=1,\ldots,S,
    \label{eq:rk-stage-interpolation-2}
    \\
    \Psi_{u,r}^{\mathrm{RK}}(x)
    &=x+r\sum_{i=1}^S b_iK_i(r,x).
    \label{eq:rk-stage-interpolation-3}
\end{align}
Here $r$ varies the step size while $u$ and $x$ remain fixed;
$\Psi_{u,h}^{\mathrm{RK}}$ is the numerical one-step map.
The assumption $c_i\in[0,1]$ ensures that every stage time
$t-c_ir$ lies in $[s,t]$, so the time bounds introduced above
apply at every stage.

The next lemma shows that derivatives of the stage slopes
grow at most linearly in $|x|$, while their Jacobians admit
bounds independent of $x$ and the ambient dimension.

\begin{lemma}[Weighted Runge--Kutta stage regularity]
\label{lem:weighted-rk-stage-regularity}
Suppose that $P_0$ is supported in $B_R(0)$ and satisfies
Assumption~\ref{ass:uniform-edge-doubling}. Fix an explicit
Runge--Kutta tableau with $c_i\in[0,1]$ and an integer
$p\ge1$. Let $0\le u<u+h\le T-\delta$ satisfy
\eqref{eq:section5-one-step-condition}, and use the notation
$s=T-u-h$, $b_*=b(s)$, and $\Theta_*=\Theta_R(s)$.

Then, for every stage $i$, every $x\in\mathbb R^d$,
every $0\le r\le h$, and every integer $0\le m\le p+1$,
\begin{align}
    &|\partial_r^mK_i(r,x)|
    \le C_p\left[
        e^{-s}b_*^{m+1}|x|
        +e^{-s/2}\Theta_*^{2m+1}
    \right],
    \label{eq:weighted-rk-stage-value}
    \\
    &\lVert{}D_x\partial_r^mK_i(r,x)\rVert_{\mathrm{op}}
    \le C_pe^{-s}\Theta_*^{2m+2}.
    \label{eq:weighted-rk-stage-jacobian}
\end{align}
For every integer $1\le m\le p+1$, the stage states satisfy
\begin{align}
    &|\partial_r^mX_i(r,x)|
    \le C_p\left[
        e^{-s}b_*^m|x|
        +e^{-s/2}\Theta_*^{2m-1}
    \right],
    \label{eq:weighted-rk-state-value}
    \\
    &\lVert{}D_x\partial_r^mX_i(r,x)\rVert_{\mathrm{op}}
    \le C_pe^{-s}\Theta_*^{2m}.
    \label{eq:weighted-rk-state-jacobian}
\end{align}
Moreover,
\begin{align}
    &|X_i(r,x)-x|
    \le C_pr\left[
        e^{-s}b_*|x|+e^{-s/2}\Theta_*
    \right],
    \label{eq:rk-stage-near-identity-value}
    \\
    &\lVert{}D_xX_i(r,x)-I\rVert_{\mathrm{op}}
    \le C_pr e^{-s}\Theta_*^2.
    \label{eq:rk-stage-near-identity-jacobian}
\end{align}
\end{lemma}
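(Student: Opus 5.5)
The plan is a double induction: first on the stage index $i$ (explicit tableau, so $X_i$ depends only on $K_j$ with $j<i$), and within each stage on the order $m$ of $r$-differentiation. I would begin by splitting the velocity as
\[
v(t,x)=\tfrac12a(t)x+\tfrac12\lambda(t)\bar y_{x,t},
\]
into an affine part with explicit coefficient and the posterior-mean part. For the posterior-mean part, $\bar y_{x,t}\in\mathfrak W_{0,0,1}$, so Lemma~\ref{lem:sec3-weighted-directional-bound} gives
\[
\bigl|\partial_t^qD_x^{k+\ell}\bar y_{x,t}[x^{\otimes k},w_1,\ldots,w_\ell]\bigr|
\le Ce^{-\ell t/2}b^{2q+k+\ell}(1+R)^{1+2q+2k+\ell}.
\]
For the affine part, the coefficient bounds \eqref{eq:partial t estimate for a and lambda} apply.

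\emph{Structure of the induction.} The quantities to control are $\partial_r^m$ of $K_i(r,x)=v(t-c_ir,X_i(r,x))$, computed by Fa\`a di Bruno. Each term is a mixed derivative $\partial_t^qD_x^nv$ evaluated at $(t-c_ir,X_i)$, multiplied by $c_i^q$ and contracted with directions $\partial_r^{m_1}X_i,\ldots,\partial_r^{m_n}X_i$, where $q+\sum m_\nu=m$ and each $m_\nu\ge1$.

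The key device, which is the ``affine-plus-remainder'' decomposition, is to carry the inductive hypothesis in the stronger form
\[
\partial_r^{m'}X_i=\alpha\,X_i+\beta,\qquad |\alpha|\le C e^{-s}b_*^{m'},\qquad |\beta|\le Ce^{-s/2}\Theta_*^{2m'-1}.
\]
The decomposition is written relative to the evaluation point $X_i$, not $x$. With it, each direction splits into a multiple of the evaluation point, controlled by the $k$-slots of \eqref{eq:higher-order-mixed-directional-bound}, plus a bounded vector, controlled by the $\ell$-slots. Converting between $x$ and $X_i$ uses the near-identity bound \eqref{eq:rk-stage-near-identity-value}. That bound is itself proved at the start of each stage, from $X_i-x=r\sum_{j<i}a_{ij}K_j$ and the $m=0$ case for earlier stages, together with the absorption inequalities \eqref{eq:section5-absorption}.

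\emph{Counting weights.} I then verify the weights term by term. With $k$ slots filled by $X_i$ and $\ell$ bounded slots, the bound above yields a product of
\begin{itemize}
\item $e^{-\ell s/2}$ times powers of $\Theta_*$ from $v$;
\item the $\alpha$ factors $e^{-s}b_*^{m_\nu}$ for the $k$ slots;
\item the $\beta$ factors $e^{-s/2}\Theta_*^{2m_\nu-1}$ for the $\ell$ slots.
\end{itemize}
Summing the exponents, together with $b_*\le\Theta_*^2$ and $e^{-s}\le1$, should give total $\Theta_*$-power $2m+1$ and decay at least $e^{-s/2}$ in the bounded part. The only term linear in $|x|$ is the pure affine one, which carries $e^{-s}b_*^{m+1}$. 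This gives \eqref{eq:weighted-rk-stage-value}.

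For the Jacobian bound \eqref{eq:weighted-rk-stage-jacobian}, differentiating once more in $x$ adds one unit direction. This costs $e^{-s/2}\Theta_*$ and also hits the directions $\partial_r^{m_\nu}X_i$, whose Jacobians are controlled by the induction hypothesis \eqref{eq:weighted-rk-state-jacobian}. The affine term contributes $e^{-s}b_*^{m+1}\le e^{-s}\Theta_*^{2m+2}$.

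\emph{Stage states.} The bounds for $X_i$ follow from Leibniz applied to $r\sum_{j<i}a_{ij}K_j$:
\[
\partial_r^mX_i=r\,\partial_r^m(\cdot)+m\,\partial_r^{m-1}(\cdot),
\]
where $(\cdot)=\sum_{j<i}a_{ij}K_j$. The $r\partial_r^m$ piece is absorbed by \eqref{eq:section5-absorption}, so the $m-1$ indices of earlier stages give exactly \eqref{eq:weighted-rk-state-value} and \eqref{eq:weighted-rk-state-jacobian}. The near-identity Jacobian bound \eqref{eq:rk-stage-near-identity-jacobian} follows in the same way from the $m=0$ Jacobian bounds of earlier stages.

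\emph{Main obstacle.} I expect the hardest part to be bookkeeping the exponential factor so that directions aligned with the evaluation point do not lose decay. A naive use of $|\partial_r^{m_\nu}X_i|\lesssim|x|$ in a unit slot would produce $|x|^n$ growth, or a factor $\sqrt d$ along the flow. This is exactly why the $x$-slots of Lemma~\ref{lem:sec3-weighted-directional-bound} are needed, and why the decomposition must be anchored at $X_i$ rather than $x$.

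I would first check the case $m=1$, $n=1$ explicitly to calibrate the exponents, and then argue the general case by adding up the increments per slot. In that count, an $x$-slot costs $(\Delta\sigma,\Delta\eta,\Delta\rho)=(0,1,2)$ and is paired with $\alpha\sim e^{-s}b_*^{m_\nu}$, while a unit slot costs $(1,1,1)$ and is paired with $\beta$.
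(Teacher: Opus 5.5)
Your architecture matches the paper's: induction over stages, an affine-plus-remainder decomposition anchored at $y=X_i(r,x)$, Fa\`a di Bruno, and Lemma~\ref{lem:sec3-weighted-directional-bound} with $k$ position slots and $\ell-k$ unit slots, with the same exponent count. The gap is in the inductive hypothesis and in the step that converts between $x$ and $X_i$.

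To get $\partial_r^{m'}X_i=\alpha X_i+\beta$ with a \emph{scalar} $\alpha=\alpha(r)$ and bounded $\beta$, you need $m'\sum_{j<i}a_{ij}\partial_r^{m'-1}K_j+r\sum_{j<i}a_{ij}\partial_r^{m'}K_j$ to be a scalar multiple of one common point plus a bounded vector. The value bounds \eqref{eq:weighted-rk-stage-value} for earlier stages do not give this, and those $K_j$ are anchored at different points $X_j$.

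The norm bound \eqref{eq:rk-stage-near-identity-value} cannot bridge $x$ and $X_i$ either. Writing $\alpha'x=\alpha'X_i-\alpha'(X_i-x)$ leaves a vector of size up to $C_pr\,e^{-s}b_*|\alpha'|\,|x|$ that need not be parallel to $X_i$, so it must occupy a unit slot. Already for $m=2$, $i\ge2$, and the term $\partial_tD_yv_c[\partial_rX_i]$, this gives only an $|x|$-coefficient of order $c_pe^{-3s}(1+R)^2b_*^4$, after spending the single factor $r$ through $r\Theta_*^2\le c_p$. That is not dominated by the required $e^{-s}b_*^3$: the ratio $c_pe^{-2s}(1+R)^2b_*$ is unbounded in $R$ and as $s\downarrow0$.

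The missing idea is that the unbounded part of $v$ is isotropic, $\frac12a(t)x$, so the linear growth is always an exact scalar multiple of the initial point. Carry, as the paper does, the hypothesis $\partial_r^mK_j=\beta_{j,m}(r)x+\kappa_{j,m}(r,x)$ with $|\beta_{j,m}|\le C_pe^{-s}b_*^{m+1}$ and $|\kappa_{j,m}|\le C_pe^{-s/2}\Theta_*^{2m+1}$ for all $0\le m\le p+1$. Then:
\begin{itemize}
\item You get $X_i=\gamma_i(r)x+\xi_{i,0}(r,x)$ with $|\gamma_i-1|\le C_pr\,e^{-s}b_*\le\frac12$, and $\partial_r^mX_i=\alpha_{i,m}(r)x+\xi_{i,m}(r,x)$.
\item Substituting $x=\gamma_i^{-1}(X_i-\xi_{i,0})$ produces your anchored form with $\alpha=\alpha_{i,m}/\gamma_i$ and $\beta=\xi_{i,m}-\alpha\xi_{i,0}$.
\item After Fa\`a di Bruno, the affine output $A_{i,m}(r)X_i$ is returned to $x$ by the same identity, which closes the induction.
\end{itemize}

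Once this is repaired, your Jacobian step is a valid and slightly lighter variant of the paper's. You differentiate the Fa\`a di Bruno terms directly in $x$ by the chain rule, using the full bounds \eqref{eq:weighted-rk-state-jacobian} and $\|D_xX_i\|_{\mathrm{op}}\le\frac32$. The paper instead differentiates $B_{i,m}(r,y)$ in $y$, which needs the global inversion of $X_i(r,\cdot)$ and Jacobian bounds on the remainders $\kappa_{j,m}$ and $\widetilde\xi_{i,m}$. Your route needs only $|\gamma_i|\ge\frac12$ pointwise.
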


\begin{proof}
We will prove a slightly stronger statement by induction over the stages.
For every already constructed stage $j$ and $0\le m\le p+1$, we construct a decomposition
\begin{equation*}
    \partial_r^mK_j(r,x)=\beta_{j,m}(r)x+\kappa_{j,m}(r,x),
\end{equation*}
with
\begin{equation}
\begin{aligned}
    |\beta_{j,m}(r)|&\le C_pe^{-s}b_*^{m+1},\\
    |\kappa_{j,m}(r,x)|&\le C_pe^{-s/2}\Theta_*^{2m+1},\\
    \lVert{}D_x\kappa_{j,m}(r,x)\rVert_{\mathrm{op}}
        &\le C_pe^{-s}\Theta_*^{2m+2}.
\end{aligned}
\label{eq:rk-induction-profile}
\end{equation}
These bounds immediately imply
\eqref{eq:weighted-rk-stage-value}--\eqref{eq:weighted-rk-stage-jacobian}.

For the first stage, $X_1(r,x)=x$, so the estimates for
the stage state are immediate. To estimate
$K_1(r,x)=v(t-c_1r,x)$, write
\begin{equation*}
    v(t,x)
    =
    \frac12a(t)x+\frac12v_c(t,x),
    \qquad
    v_c(t,x):=\lambda(t)\bar y_{x,t}.
\end{equation*}
Then the required decomposition is given by
\begin{equation*}
    \beta_{1,m}(r)
    :=\frac12(-c_1)^m\partial_\tau^m a(t-c_1r),
    \qquad
    \kappa_{1,m}(r,x)
    :=\frac12(-c_1)^m\partial_\tau^m v_c(t-c_1r,x).
\end{equation*}
The coefficient estimate~\eqref{eq:partial t estimate for a and lambda} for $a$ gives the first bound for $\beta_{1,m}$ in
\eqref{eq:rk-induction-profile}. Since
$v_c\in\mathfrak W_{1,1,1}$, Lemma~\ref{lem:sec3-weighted-directional-bound}
with $q=m$, $k=0$ and, respectively, $\ell=0$ and $\ell=1$ gives the
remaining two bounds for $\kappa_{1,m}$. Thus the induction profile holds for stage $j=1$.

Fix $i>1$ and assume it holds for every $j<i$. We first derive the structure
of the stage state directly from \eqref{eq:rk-stage-interpolation-1}. For
$m=0$,
\begin{equation*}
\begin{aligned}
    X_i(r,x)
    =x+r\sum_{j<i}a_{ij}
      \bigl(\beta_{j,0}(r)x+\kappa_{j,0}(r,x)\bigr)=\gamma_i(r)x+\xi_{i,0}(r,x),
\end{aligned}
\end{equation*}
where
\begin{equation*}
    \gamma_i(r):=1+r\sum_{j<i}a_{ij}\beta_{j,0}(r),
    \qquad
    \xi_{i,0}(r,x):=r\sum_{j<i}a_{ij}\kappa_{j,0}(r,x).
\end{equation*}
By the induction hypothesis, $\beta_{j,0}(r)$ and $\kappa_{j,0}(r,x)$ satisfy~\eqref{eq:rk-induction-profile}. Hence, we obtain the $m=0$ bounds for $X_i(r,x)$,
\begin{equation}\label{eq:m=0 bounds for X_i(r,x)}
\begin{aligned}
        &|\gamma_i(r)-1|\le C_pr e^{-s}b_*,\\
    &|\xi_{i,0}(r,x)|\le C_pr e^{-s/2}\Theta_*,\\
    &\lVert{}D_x\xi_{i,0}(r,x)\rVert_{\mathrm{op}}
    \le C_pr e^{-s}\Theta_*^2.
\end{aligned}
\end{equation}
This gives \eqref{eq:rk-stage-near-identity-value} and
\eqref{eq:rk-stage-near-identity-jacobian}.

For $1\le m\le p+1$, differentiating
\eqref{eq:rk-stage-interpolation-1} gives
\begin{equation*}
    \partial_r^mX_i
    =m\sum_{j<i}a_{ij}\partial_r^{m-1}K_j
     +r\sum_{j<i}a_{ij}\partial_r^mK_j.
\end{equation*}
Using the induction decomposition, we write
\begin{equation*}
    \partial_r^mX_i(r,x)=\alpha_{i,m}(r)x+\xi_{i,m}(r,x),
\end{equation*}
where
\begin{equation*}
\begin{aligned}
    \alpha_{i,m}(r)
    &:=m\sum_{j<i}a_{ij}\beta_{j,m-1}(r)
       +r\sum_{j<i}a_{ij}\beta_{j,m}(r),\\
    \xi_{i,m}(r,x)
    &:=m\sum_{j<i}a_{ij}\kappa_{j,m-1}(r,x)
       +r\sum_{j<i}a_{ij}\kappa_{j,m}(r,x).
\end{aligned}
\end{equation*}
The induction bounds~\eqref{eq:rk-induction-profile} and the absorption rules
\eqref{eq:section5-absorption} yield
\begin{equation}
\begin{aligned}
    |\alpha_{i,m}(r)|&\le C_pe^{-s}b_*^m,\\
    |\xi_{i,m}(r,x)|&\le C_pe^{-s/2}\Theta_*^{2m-1},\\
    \lVert{}D_x\xi_{i,m}(r,x)\rVert_{\mathrm{op}}
        &\le C_pe^{-s}\Theta_*^{2m}.
\end{aligned}
\label{eq:rk-state-affine-profile}
\end{equation}
Since $D_x\partial_r^mX_i=\alpha_{i,m}I+D_x\xi_{i,m}$ and
$b_*^m\le\Theta_*^{2m}$, this proves
\eqref{eq:weighted-rk-state-value}--\eqref{eq:weighted-rk-state-jacobian}.

The derivatives of $v$ in the stage expansion are evaluated
at $y=X_i(r,x)$. To apply
\eqref{eq:higher-order-mixed-directional-bound}, we therefore
rewrite each inserted stage derivative as a scalar multiple
of $y$ plus a bounded remainder.

The preceding bounds and
\eqref{eq:section5-one-step-condition} allow us to choose
$c_p$ sufficiently small so that, uniformly in $r$ and $x$,
\begin{equation*}
    |\gamma_i(r)-1|\le\frac12,
    \qquad
    \lVert{}D_xX_i(r,x)-I\rVert_{\mathrm{op}}\le\frac12.
\end{equation*}
For each fixed $r$, the map $x\mapsto X_i(r,x)-x$ is
therefore globally $1/2$-Lipschitz. For every
$y\in\mathbb R^d$, the equation $X_i(r,x)=y$ is equivalent to
\begin{equation*}
    x=y-\bigl(X_i(r,x)-x\bigr),
\end{equation*}
which has a unique solution by the contraction mapping
theorem. Thus $X_i(r,\cdot)$ is globally invertible, and
\begin{equation*}
    |\gamma_i(r)|^{-1}\le2,
    \qquad
    \lVert(D_xX_i(r,x))^{-1}\rVert_{\mathrm{op}}\le2.
\end{equation*}
We now rewrite the derivatives already computed at fixed
$x$ in terms of $y$. Since
$
    y=\gamma_i(r)x+\xi_{i,0}(r,x),
$
we have
$
    x=\gamma_i(r)^{-1}
    \bigl(y-\xi_{i,0}(r,x)\bigr).
$
Substituting this into the preceding affine decomposition gives
\begin{equation}
    \partial_r^mX_i(r,x)
    =\widetilde\alpha_{i,m}(r)y+\widetilde\xi_{i,m}(r,y),
    \label{eq:rk-X-current-point-decomposition}
\end{equation}
where, with $x=X_i(r,\cdot)^{-1}(y)$,
\begin{equation*}
    \widetilde\alpha_{i,m}(r)
    :=\frac{\alpha_{i,m}(r)}{\gamma_i(r)},
    \qquad
    \widetilde\xi_{i,m}(r,y)
    :=\xi_{i,m}(r,x)
      -\widetilde\alpha_{i,m}(r)\xi_{i,0}(r,x).
\end{equation*}
Using \eqref{eq:rk-state-affine-profile}, \eqref{eq:m=0 bounds for X_i(r,x)}, the inverse bound $|\gamma_i(r)|^{-1}\le2$, and step absorption rules \eqref{eq:section5-absorption}, we obtain
\begin{equation}
\begin{aligned}
    |\widetilde\alpha_{i,m}(r)|&\le C_pe^{-s}b_*^m,\\
    |\widetilde\xi_{i,m}(r,y)|&\le C_pe^{-s/2}\Theta_*^{2m-1},\\
    \lVert{}D_y\widetilde\xi_{i,m}(r,y)\rVert_{\mathrm{op}}
        &\le C_pe^{-s}\Theta_*^{2m}.
\end{aligned}
\label{eq:rk-current-point-profile}
\end{equation}

We now differentiate \eqref{eq:rk-stage-interpolation-2}. Fa\`a di Bruno's 
formula expresses $\partial_r^mK_i$ as a finite sum of terms
\begin{equation*}
     C(-c_i)^q
    (\partial_t^qD_y^\ell v)(t-c_ir,y)
    \bigl[
        \partial_r^{m_1}X_i(r,x),\ldots,
        \partial_r^{m_\ell}X_i(r,x)
    \bigr],
    \qquad y=X_i(r,x),
\end{equation*}
where $q+m_1+\cdots+m_\ell=m$ and $m_j\ge1$; the powers of $c_i$ and
combinatorial coefficients are absorbed into $C_p$.

Recall \[v(t,y)
    =
    \frac12a(t)y+\frac12v_c(t,y).\] Consider first the contribution of $v_c$.
Expand each inserted stage derivative using
\eqref{eq:rk-X-current-point-decomposition}.
For a subset $I\subseteq\{1,\ldots,\ell\}$, select
$\widetilde\alpha_{i,m_j}y$ when $j\in I$ and
$\widetilde\xi_{i,m_j}$ when $j\in I^c$, and set
\begin{equation*}
    k:=|I|,
    \qquad
    M_I:=\sum_{j\in I}m_j,
    \qquad
    M_{I^c}:=\sum_{j\in I^c}m_j.
\end{equation*}
In particular,
$
    M_I\ge k,
$ and $
    q+M_I+M_{I^c}=m.
$
After normalizing each nonzero remainder as
$\widetilde\xi_{i,m_j}=|\widetilde\xi_{i,m_j}|w_j$, $|w_j|=1$, the
corresponding contribution is bounded by
\begin{equation*}
\begin{aligned}
&C_p\left(\prod_{j\in I}|\widetilde\alpha_{i,m_j}|\right)
\left(\prod_{j\notin I}|\widetilde\xi_{i,m_j}|\right)\\
&\qquad\times
\left|
\partial_\tau^qD_y^\ell v_c(t-c_ir,y)
\bigl[\underbrace{y,\ldots,y}_{k\ \mathrm{times}},
      w_j,\ j\notin I\bigr]
\right|.
\end{aligned}
\end{equation*}
Since $v_c\in\mathfrak W_{1,1,1}$, we apply
\eqref{eq:higher-order-mixed-directional-bound} with
$k$ position directions and $\ell-k$ unit directions.
Using $t-c_ir\ge s$ gives
\begin{equation*}
\left|
\partial_\tau^qD_y^\ell v_c(t-c_ir,y)
\bigl[\underbrace{y,\ldots,y}_{k\ \mathrm{times}},
      w_j,\ j\notin I\bigr]
\right|
\le C_p e^{-(1+\ell-k)s/2}
  b_*^{1+2q+\ell}(1+R)^{1+2q+\ell+k}.
\end{equation*}
We combine this bound with
\eqref{eq:rk-current-point-profile}.
The scalar coefficients contribute $b_*^{M_I}$, while
the remainder factors contribute
$\Theta_*^{2M_{I^c}-(\ell-k)}$.
Since $\Theta_*=(1+R)b_*$, the total power of $b_*$ is
\begin{equation*}
\begin{aligned}
    1+2q+\ell+M_I+2M_{I^c}-(\ell-k)
=
    2m+1-(M_I-k)
\le 2m+1.
\end{aligned}
\end{equation*}
The total power of $1+R$ is
\begin{equation*}
\begin{aligned}
    1+2q+\ell+k+2M_{I^c}-(\ell-k)
    =
    2m+1-2(M_I-k)
\le 2m+1.
\end{aligned}
\end{equation*}
The exponential decay is stronger
than $e^{-s/2}$, so every centered Fa\`a di Bruno term is bounded by
$C_pe^{-s/2}\Theta_*^{2m+1}$. Summing the finitely many terms preserves this bound.

For the affine part $\frac12a(\tau)y$, only $\ell=0$ and $\ell=1$ occur.
The term with $\ell=0$ is
\[
    \frac12(-c_i)^m
    (\partial_t^ma)(t-c_ir)y.
\]
For $\ell=1$, we have $q+m_1=m$ with $m_1\ge1$.
Using \eqref{eq:rk-X-current-point-decomposition},
the corresponding terms take the form
\begin{equation*}
    C(-c_i)^q(\partial_t^qa)(t-c_ir)
    \left[
        \widetilde\alpha_{i,m_1}(r)y
        +
        \widetilde\xi_{i,m_1}(r,y)
    \right],
\end{equation*}
Using the coefficient estimate~\eqref{eq:partial t estimate for a and lambda} for $a$ and~\eqref{eq:rk-current-point-profile} for $\widetilde\alpha$ and $\widetilde\xi$, we may
therefore collect all affine contributions into $A_{i,m}(r)y$ and all
remaining contributions into $B_{i,m}(r,y)$, with
\begin{equation}
\begin{aligned}
    \partial_r^mK_i(r,x)&=A_{i,m}(r)y+B_{i,m}(r,y),\\
    |A_{i,m}(r)|&\le C_pe^{-s}b_*^{m+1},\\
    |B_{i,m}(r,y)|&\le C_pe^{-s/2}\Theta_*^{2m+1}.
\end{aligned}
\label{eq:rk-K-current-profile}
\end{equation}

To estimate $D_yB_{i,m}$, differentiate the multilinear expansion in a unit
direction. The derivative either lands on the outer mixed derivative of
$v_c$, on an explicit ambient direction $y$, or on a remainder
$\widetilde\xi_{i,m_j}$. In the first two cases
Lemma~\ref{lem:sec3-weighted-directional-bound} contributes one additional fixed
unit direction. In the third case, \eqref{eq:rk-current-point-profile}
replaces the factor $e^{-s/2}\Theta_*^{2m_j-1}$ by
$e^{-s}\Theta_*^{2m_j}$. The same bookkeeping therefore gives
\begin{equation}\label{eq:D_yB bound}
    \lVert{}D_yB_{i,m}(r,y)\rVert_{\mathrm{op}}
    \le C_pe^{-s}\Theta_*^{2m+2}.
\end{equation}

Finally, return to the initial point $x$ using
$y=\gamma_i(r)x+\xi_{i,0}(r,x)$. Define
\begin{equation*}
    \beta_{i,m}(r):=A_{i,m}(r)\gamma_i(r),
    \qquad
    \kappa_{i,m}(r,x)
    :=A_{i,m}(r)\xi_{i,0}(r,x)
      +B_{i,m}\bigl(r,X_i(r,x)\bigr).
\end{equation*}
Then $\partial_r^mK_i=\beta_{i,m}x+\kappa_{i,m}$. Since
$|\gamma_i|\le3/2$ and $\lVert{}D_xX_i\rVert_{\mathrm{op}}\le3/2$, \eqref{eq:rk-K-current-profile}, \eqref{eq:D_yB bound},
\eqref{eq:m=0 bounds for X_i(r,x)}, and \eqref{eq:section5-absorption} give exactly
\eqref{eq:rk-induction-profile} for stage $i$. This closes the induction and
proves the lemma.
\end{proof}

\subsubsection{Weighted local defects and endpoint transport}

We next compare the Taylor and Runge--Kutta maps with the
exact reverse flow. The following lemma provides the
exact-flow derivative bounds needed for the local
remainders, together with bounds on the inverse map
for the endpoint change of variables.

\begin{lemma}[Exact one-step flow bounds]
\label{lem:section5-exact-step-bounds}
Suppose that $P_0$ is supported in $B_R(0)$ and satisfies
Assumption~\ref{ass:uniform-edge-doubling}.
Fix an integer $p\ge1$, and let
$0\le u<u+h\le T-\delta$ satisfy
\eqref{eq:section5-one-step-condition}.

For every $0\le r\le h$, the map
$\phi_{u,r}:\mathbb R^d\to\mathbb R^d$ is a $C^1$
diffeomorphism. For all $x,y\in\mathbb R^d$,
\begin{equation*}
\begin{aligned}
    &|\phi_{u,r}(x)|
    \le C_p\left[
        |x|+r e^{-s/2}\Theta_*
    \right],
    \\
    &|\phi_{u,r}(x)-x|
    \le C_pr\left[
        e^{-s}b_*|x|
        +e^{-s/2}\Theta_*
    \right],\\
    &\lVert{}D_x\phi_{u,r}(x)\rVert_{\mathrm{op}}
    +
    \lVert{}D_y\phi_{u,r}^{-1}(y)\rVert_{\mathrm{op}}
    \le C_p.
    \end{aligned}
\end{equation*}
At the end of the step, the inverse map also satisfies
\begin{equation*}
    |\phi_{u,h}^{-1}(y)|
    \le C_p\left[
        |y|+h e^{-s/2}\Theta_*
    \right].
\end{equation*}
The derivatives with respect to $r$ satisfy
\begin{equation*}
\begin{aligned}
    &|\partial_r^{p+1}\phi_{u,r}(x)|
    \le C_p\left[
        e^{-s}b_*^{p+1}|x|
        +e^{-s/2}\Theta_*^{2p+1}
    \right],
    \\
    &\lVert{}D_x\partial_r^{p+1}\phi_{u,r}(x)\rVert_{\mathrm{op}}
    \le C_pe^{-s}\Theta_*^{2p+2}.
\end{aligned}
\end{equation*}
\end{lemma}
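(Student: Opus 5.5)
The plan is to treat the exact local flow $\phi_{u,r}$ exactly as the Runge--Kutta stages were treated in Lemma~\ref{lem:weighted-rk-stage-regularity}. We first establish the zeroth-order bounds by Gr\"onwall arguments. We then identify $\partial_r^{m}\phi_{u,r}(x)=(L^{m}x)(t-r,\phi_{u,r}(x))$ via the chain rule of Section~\ref{subsec:reverse probability flow for the OU process}, and transfer the weighted estimates of Theorem~\ref{thm:weighted-estimates-Ljx} from the current point $\phi_{u,r}(x)$ back to the initial point $x$.

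\emph{Zeroth order and diffeomorphism.} Write $v=\frac12 a(\tau)y+\frac12\lambda(\tau)\bar y_{y,\tau}$. For $\tau\in[s,t]$ we have $|a(\tau)|\le e^{-s}b_*$ and $|\lambda\bar y|\le e^{-s/2}b_*R$. Moreover, by \eqref{eq:j-th lipschitz estimate} with $j=1$, the field satisfies $\|D_yv(\tau,y)\|_{\mathrm{op}}\le Ce^{-s}\Theta_*^2$.

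\begin{itemize}
\item \emph{Global existence.} The vector field is globally Lipschitz in $y$ on the time interval $[s,t]$. Hence $\phi_{u,r}$ exists globally and is a $C^1$ diffeomorphism, with inverse given by the backward flow.
\item \emph{Jacobian bounds.} The variational equation $\partial_r D_x\phi=D_yv\,D_x\phi$ and Gr\"onwall give $\|D_x\phi_{u,r}\|\le\exp(Cre^{-s}\Theta_*^2)\le C_p$ by \eqref{eq:section5-one-step-condition}. The same argument for the backward flow bounds $D_y\phi_{u,r}^{-1}$.
\item \emph{Growth and displacement.} Integrate $|\partial_r\phi|\le e^{-s}b_*|\phi|+e^{-s/2}\Theta_*$. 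Gr\"onwall, together with $rb_*\le c_p$ from \eqref{eq:section5-absorption}, gives $|\phi_{u,r}(x)|\le C_p(|x|+re^{-s/2}\Theta_*)$. Reinserting this into the integral yields the displacement bound $|\phi_{u,r}(x)-x|\le C_pr(e^{-s}b_*|x|+e^{-s/2}\Theta_*)$.
\item \emph{Inverse at the endpoint.} The same estimate applied to the backward flow from $y$ at forward time $s$ gives the bound on $|\phi^{-1}_{u,h}(y)|$.
\end{itemize}

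\emph{Higher $r$-derivatives.} Proposition~\ref{prop:taylor-flow}, or rather the identity preceding it, gives $\partial_r^{p+1}\phi_{u,r}(x)=(L^{p+1}x)(t-r,y)$ with $y=\phi_{u,r}(x)$. The steps are as follows.

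\begin{enumerate}
\item Apply \eqref{eq:truncation j-th estimate} with $j=p+1$. Using $b(t-r)\le b_*$, this gives
\[
|\partial_r^{p+1}\phi|\le C_p\bigl[e^{-s}b_*^{p+1}|y|+e^{-s/2}\Theta_*^{2p+1}\bigr].
\]
\item Substitute the growth bound for $|y|$. The cross term $e^{-s}b_*^{p+1}\,re^{-s/2}\Theta_*$ is absorbed into $e^{-s/2}\Theta_*^{2p+1}$, since $rb_*^{p+1}\Theta_*\le c_p\Theta_*^{2p+1}$ by \eqref{eq:section5-absorption} and $b_*\le\Theta_*$.
\item For the Jacobian, use the chain rule: $D_x\partial_r^{p+1}\phi=D_y(L^{p+1}x)(t-r,y)\,D_x\phi_{u,r}(x)$. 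Combining \eqref{eq:j-th lipschitz estimate} with the bound $\|D_x\phi\|\le C_p$ gives $C_pe^{-s}\Theta_*^{2p+2}$.
\end{enumerate}

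The main point needing care is the exponential factors. The bounds are stated with $e^{-s}$ and $e^{-s/2}$ where $s\le t-r$, so $e^{-(t-r)}\le e^{-s}$ is immediate. Every Gr\"onwall exponent must be controlled by the dimension-free quantity $h e^{-s}\Theta_*^2\le c_p$; this is exactly what \eqref{eq:section5-one-step-condition} provides, so no dependence on $d$, $T$ or $|x|$ enters the constants.
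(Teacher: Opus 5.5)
Your proposal is correct and follows essentially the same route as the paper: Gr\"onwall on the integral equation and on the variational equations using the $j=1$ bounds, then the identity $\partial_r^{p+1}\phi_{u,r}=(L^{p+1}x)(t-r,\phi_{u,r})$ combined with Theorem~\ref{thm:weighted-estimates-Ljx}, the trajectory bound, and absorption via \eqref{eq:section5-absorption}. The differences are cosmetic. You bound $|\phi_{u,h}^{-1}(y)|$ by Gr\"onwall on the backward flow, whereas the paper absorbs the $|x|$ term in the displacement estimate after shrinking $c_p$. Your opening analogy with the Runge--Kutta stage lemma is not needed, since no affine-plus-remainder transfer occurs here.
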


\begin{proof}
Since $v=Lx$, the estimates
\eqref{eq:truncation j-th estimate} and
\eqref{eq:j-th lipschitz estimate} with $j=1$ give,
for $0\le r\le h$ and every $z\in\mathbb R^d$,
\begin{equation*}
\begin{aligned}
    &|v(t-r,z)|
    \le C\left[
        e^{-s}b_*|z|+e^{-s/2}\Theta_*
    \right],
    \\
    &\lVert{}D_xv(t-r,z)\rVert_{\mathrm{op}}
    \le Ce^{-s}\Theta_*^2.
\end{aligned}
\end{equation*}
These uniform growth and Lipschitz bounds ensure that
solutions exist uniquely throughout the step in both
time directions. Smoothness at positive forward time
then implies that $\phi_{u,r}$ is a $C^1$ diffeomorphism.

The integral equation for the flow is
\begin{equation}\label{eq:integral equation for the flow}
    \phi_{u,r}(x)
    =
    x+\int_0^r
    v\bigl(t-\rho,\phi_{u,\rho}(x)\bigr)\,d\rho.
\end{equation}
Applying Gr\"onwall's inequality with the velocity bound gives
\begin{equation}\label{eq:trajectory bound}
\begin{aligned}
    |\phi_{u,r}(x)|
    &\le
    \exp\left(Cr e^{-s}b_*\right)
    \left[
        |x|+Cr e^{-s/2}\Theta_*
    \right]
    \\
    &\le
    C_p\left[
        |x|+r e^{-s/2}\Theta_*
    \right],
\end{aligned}
\end{equation}
where we used
$r e^{-s}b_*\le h\Theta_*^2\le c_p$.
Substituting this estimate back into the integral equation~\eqref{eq:integral equation for the flow}
yields
\begin{equation}\label{eq:displacement estimate}
\begin{aligned}
    |\phi_{u,r}(x)-x|
    &\le
    C_pr\left[
        e^{-s}b_*|x|+e^{-s/2}\Theta_*
    \right]
    +
    C_pr^2e^{-3s/2}b_*\Theta_*
    \\
    &\le
    C_pr\left[
        e^{-s}b_*|x|+e^{-s/2}\Theta_*
    \right].
\end{aligned}
\end{equation}
The last step follows from
\eqref{eq:section5-absorption} with $q=0$.

We next estimate the spatial Jacobian and its inverse.
Differentiating~\eqref{eq:integral equation for the flow} gives
\begin{equation*}
\begin{aligned}
    \partial_rD_x\phi_{u,r}(x)
    &=
    D_xv\bigl(t-r,\phi_{u,r}(x)\bigr)
    D_x\phi_{u,r}(x),
    \\
    \partial_r\bigl(D_x\phi_{u,r}(x)\bigr)^{-1}
    &=
    -\bigl(D_x\phi_{u,r}(x)\bigr)^{-1}
    D_xv\bigl(t-r,\phi_{u,r}(x)\bigr),
\end{aligned}
\end{equation*}
both with initial matrix $I$ at $r=0$.
The inverse matrix is the derivative of the inverse flow
at the corresponding endpoint. Gr\"onwall's inequality
applied to these two equations therefore gives,
uniformly in $x$ and $y$,
\begin{equation}\label{eq:Jacobian bound}
\begin{aligned}
    \lVert{}D_x\phi_{u,r}(x)\rVert_{\mathrm{op}}
    +
    \lVert{}D_y\phi_{u,r}^{-1}(y)\rVert_{\mathrm{op}}
    \le
    2\exp\left(Cr e^{-s}\Theta_*^2\right)\le C_p.
\end{aligned}
\end{equation}

For the value of the inverse map, let $y=\phi_{u,h}(x)$.
The displacement estimate~\eqref{eq:displacement estimate} gives
\begin{equation*}
    |x|
    \le
    |y|
    +
    C_ph e^{-s}b_*|x|
    +
    C_ph e^{-s/2}\Theta_*.
\end{equation*}
Choosing $c_p$ sufficiently small ensures that
$
    C_ph e^{-s}b_*
    \le C_ph\Theta_*^2
    \le\frac12.
$
Absorbing the term containing $|x|$ gives
\begin{equation*}
    |\phi_{u,h}^{-1}(y)|
    \le C_p\left[
        |y|+h e^{-s/2}\Theta_*
    \right].
\end{equation*}

It remains to estimate the higher derivatives.
Along the exact reverse flow, differentiation in $r$
is given by the material derivative $L$. Hence
\begin{equation*}
    \partial_r^{p+1}\phi_{u,r}(x)
    =
    (L^{p+1}x)
    \bigl(t-r,\phi_{u,r}(x)\bigr).
\end{equation*}
Applying \eqref{eq:truncation j-th estimate} with
$j=p+1$ and using the trajectory bound~\eqref{eq:trajectory bound} yields
\begin{equation*}
\begin{aligned}
    |\partial_r^{p+1}\phi_{u,r}(x)|
    &\le
    C_p\left[
        e^{-s}b_*^{p+1}|\phi_{u,r}(x)|
        +e^{-s/2}\Theta_*^{2p+1}
    \right]
    \\
    &\le
    C_p\left[
        e^{-s}b_*^{p+1}|x|
        +
        r e^{-3s/2}b_*^{p+1}\Theta_*
        +
        e^{-s/2}\Theta_*^{2p+1}
    \right]
    \\
    &\le
    C_p\left[
        e^{-s}b_*^{p+1}|x|
        +e^{-s/2}\Theta_*^{2p+1}
    \right].
\end{aligned}
\end{equation*}
Here the additional term is absorbed using
\eqref{eq:section5-absorption} with $q=p$:
\begin{equation*}
    r b_*^{p+1}\Theta_*
    \le c_p b_*^p\Theta_*
    \le c_p\Theta_*^{2p+1}.
\end{equation*}

Finally, differentiation with respect to the initial point
gives
\begin{equation*}
\begin{aligned}
    D_x\partial_r^{p+1}\phi_{u,r}(x)
    =
    D_x(L^{p+1}x)
    \bigl(t-r,\phi_{u,r}(x)\bigr)
    D_x\phi_{u,r}(x).
\end{aligned}
\end{equation*}
Combining \eqref{eq:j-th lipschitz estimate} with
$j=p+1$ and the Jacobian bound~\eqref{eq:Jacobian bound} proved above gives
\begin{equation*}
    \lVert{}D_x\partial_r^{p+1}\phi_{u,r}(x)\rVert_{\mathrm{op}}
    \le C_pe^{-s}\Theta_*^{2p+2}.
\end{equation*}
This completes the proof.
\end{proof}

We now use the flow and stage estimates to bound the local
errors of both schemes. Write
\begin{equation*}
    \Psi_{u,h}^{\mathrm T}(x)
    :=
    \Phi_h^{(p)}(u,x)
\end{equation*}
for the order-$p$ Taylor map, and let
$\Psi_{u,h}^{\mathrm{RK}}$ be the Runge--Kutta map defined
in \eqref{eq:rk-stage-interpolation-3}.
For either method, define the local error by
\begin{equation*}
    E_{u,h}(x)
    :=
    \Psi_{u,h}(x)-\phi_{u,h}(x).
\end{equation*}

\begin{lemma}[Weighted $C^1$ local defects]
\label{lem:sec5-weighted-C1-local-defects}
Suppose that $P_0$ is supported in $B_R(0)$ and satisfies
Assumption~\ref{ass:uniform-edge-doubling}.
Let $0\le u<u+h\le T-\delta$ satisfy
\eqref{eq:section5-one-step-condition}.
Then the order-$p$ Taylor scheme and the fixed explicit
Runge--Kutta method of classical order $p$ described above
satisfy, for every $x\in\mathbb R^d$,
\begin{align}
    |E_{u,h}(x)|
    &\le C_ph^{p+1}\left[
        e^{-s}b_*^{p+1}|x|
        +e^{-s/2}\Theta_*^{2p+1}
    \right],
    \label{eq:weighted-C1-defect-value}
    \\
    \lVert{}D_xE_{u,h}(x)\rVert_{\mathrm{op}}
    &\le C_ph^{p+1}e^{-s}\Theta_*^{2p+2}.
    \label{eq:weighted-C1-defect-jacobian}
\end{align}
\end{lemma}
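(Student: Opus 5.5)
The plan is to compare both numerical maps with the exact flow through order-$p$ Taylor expansions in the step variable $r$, taken at fixed $u$ and $x$. Each local defect will then be an integral remainder involving $(p+1)$-st $r$-derivatives, and these are already controlled by Lemma~\ref{lem:section5-exact-step-bounds} and Lemma~\ref{lem:weighted-rk-stage-regularity}.

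\emph{Taylor scheme.} By \eqref{eq:taylor-x}, the defect is
\[
E_{u,h}(x)=-\int_0^h\frac{(h-r)^p}{p!}\,\partial_r^{p+1}\phi_{u,r}(x)\,dr .
\]
Here we use $\partial_r^{j}\phi_{u,r}(x)|_{r=0}=(L^jx)(t,x)$, so the Taylor map is exactly the degree-$p$ Taylor polynomial of $r\mapsto\phi_{u,r}(x)$ at $r=0$. Applying the $\partial_r^{p+1}\phi$ bounds of Lemma~\ref{lem:section5-exact-step-bounds} under the integral gives \eqref{eq:weighted-C1-defect-value}, using $\int_0^h(h-r)^p\,dr/p!\le h^{p+1}$. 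For the Jacobian, we differentiate under the integral in $x$ and use the bound on $D_x\partial_r^{p+1}\phi_{u,r}$ from the same lemma. This yields \eqref{eq:weighted-C1-defect-jacobian}.

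\emph{Runge--Kutta scheme.} Set $g(r):=\Psi^{\mathrm{RK}}_{u,r}(x)-\phi_{u,r}(x)$. Both maps are smooth in $r$, because the stage functions are compositions of smooth maps at positive forward times. Classical order $p$ means that $\partial_r^jg(0)=0$ for $0\le j\le p$. The reason is that the order conditions are algebraic identities in elementary differentials, and they hold for any smooth nonautonomous field. We can make this explicit by autonomizing with the time coordinate $\tau=t-r$, which has constant velocity $-1$; the condition $c_i=\sum_ja_{ij}$ for an explicit tableau of order $p$ makes the time stage consistent. Hence
\[
g(h)=\int_0^h\frac{(h-r)^p}{p!}\,g^{(p+1)}(r)\,dr ,
\]
with $g^{(p+1)}=\partial_r^{p+1}\Psi^{\mathrm{RK}}-\partial_r^{p+1}\phi$.

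For the RK part, differentiate \eqref{eq:rk-stage-interpolation-3} by Leibniz:
\[
\partial_r^{p+1}\Psi^{\mathrm{RK}}_{u,r}=\sum_ib_i\bigl[(p+1)\partial_r^pK_i+r\,\partial_r^{p+1}K_i\bigr].
\]
Apply \eqref{eq:weighted-rk-stage-value} with $m=p,p+1$. The $r\,\partial_r^{p+1}K_i$ term is absorbed by \eqref{eq:section5-absorption}, since $r b_*^{p+2}\le c_pb_*^{p+1}$ and $r\Theta_*^{2p+3}\le c_p\Theta_*^{2p+1}$. The $m=p$ term is already of the required size, because $b_*^{p+1}$ and $\Theta_*^{2p+1}$ appear there exactly. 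The Jacobian is handled in the same way with \eqref{eq:weighted-rk-stage-jacobian}, giving $e^{-s}\Theta_*^{2p+2}$. The exact-flow part is again bounded by Lemma~\ref{lem:section5-exact-step-bounds}. Integrating the remainder in $r$ then gives both estimates.

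\emph{Main obstacle.} The delicate point is justifying $\partial_r^jg(0)=0$ for $j\le p$ rigorously in the present nonautonomous, unbounded setting. Classical order theory is stated for smooth fields, and the fields here are indeed $C^\infty$ in $(t,x)$ for $t>0$. Since the vanishing of $\partial_r^jg(0)$ is a pointwise identity at $r=0$, only local smoothness near $(t,x)$ is needed, so the global growth of the velocity field causes no trouble. Any bounds used must enter only through the remainder, which carries the weights established above.
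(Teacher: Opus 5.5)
Your proposal is correct and follows essentially the same route as the paper. For the Taylor scheme you use the integral Taylor remainder in $r$, controlled by the $\partial_r^{p+1}\phi_{u,r}$ bounds of Lemma~\ref{lem:section5-exact-step-bounds}. For Runge--Kutta you use the vanishing of the first $p$ $r$-derivatives of $\Psi^{\mathrm{RK}}_{u,r}-\phi_{u,r}$ at $r=0$, the Leibniz expansion of \eqref{eq:rk-stage-interpolation-3}, the stage bounds with $m=p,p+1$, and absorption of the $r$-multiplied terms via \eqref{eq:section5-absorption}. Your autonomization remark (via $c_i=\sum_j a_{ij}$) only makes explicit the justification of the order conditions for the nonautonomous field, which the paper takes for granted.
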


\begin{proof}
For the Taylor scheme, the integral remainder gives
\begin{equation*}
    E_{u,h}^{\mathrm T}(x)
    =
    -\int_0^h
    \frac{(h-r)^p}{p!}
    \partial_r^{p+1}\phi_{u,r}(x)\,dr.
\end{equation*}
Applying Lemma~\ref{lem:section5-exact-step-bounds} to
the integrand proves the value estimate~\eqref{eq:weighted-C1-defect-value}. Differentiating
under the integral and applying the corresponding
Jacobian bound in Lemma~\ref{lem:section5-exact-step-bounds} proves the spatial derivative estimate~\eqref{eq:weighted-C1-defect-jacobian}.
In both cases, integration contributes the factor
\begin{equation*}
    \int_0^h\frac{(h-r)^p}{p!}\,dr
    =
    \frac{h^{p+1}}{(p+1)!}.
\end{equation*}

For the Runge--Kutta method, classical order $p$ gives
\begin{equation*}
    \left.
        \partial_r^m\Psi_{u,r}^{\mathrm{RK}}(x)
    \right|_{r=0}
    =
    \left.
        \partial_r^m\phi_{u,r}(x)
    \right|_{r=0},
    \qquad 0\le m\le p.
\end{equation*}
These equalities hold as identities in $x$, so they
remain valid after spatial differentiation.
Taylor's formula therefore yields
\begin{equation*}
    E_{u,h}^{\mathrm{RK}}(x)
    =
    \int_0^h
    \frac{(h-r)^p}{p!}
    \left[
        \partial_r^{p+1}\Psi_{u,r}^{\mathrm{RK}}(x)
        -
        \partial_r^{p+1}\phi_{u,r}(x)
    \right]dr.
\end{equation*}

Differentiating \eqref{eq:rk-stage-interpolation-3} gives
\begin{equation*}
\begin{aligned}
    \partial_r^{p+1}\Psi_{u,r}^{\mathrm{RK}}(x)
    ={}
    (p+1)\sum_{i=1}^S
    b_i\partial_r^pK_i(r,x)+
    r\sum_{i=1}^S
    b_i\partial_r^{p+1}K_i(r,x).
\end{aligned}
\end{equation*}
We apply \eqref{eq:weighted-rk-stage-value} and
\eqref{eq:weighted-rk-stage-jacobian} with
$m=p$ and $m=p+1$.
The terms multiplied by $r$ are controlled by
\eqref{eq:section5-absorption}, which gives
\begin{equation*}
\begin{aligned}
    r b_*^{p+2}
    &\le c_p b_*^{p+1},
    \\
    r\Theta_*^{2p+3}
    &\le c_p\Theta_*^{2p+1},
    \\
    r\Theta_*^{2p+4}
    &\le c_p\Theta_*^{2p+2}.
\end{aligned}
\end{equation*}
Consequently,
\begin{equation*}
\begin{aligned}
    |\partial_r^{p+1}\Psi_{u,r}^{\mathrm{RK}}(x)|
    &\le C_p\left[
        e^{-s}b_*^{p+1}|x|
        +e^{-s/2}\Theta_*^{2p+1}
    \right],
    \\
    \lVert{}D_x\partial_r^{p+1}
        \Psi_{u,r}^{\mathrm{RK}}(x)\rVert_{\mathrm{op}}
    &\le C_pe^{-s}\Theta_*^{2p+2}.
\end{aligned}
\end{equation*}
Combining these estimates and applying
Lemma~\ref{lem:section5-exact-step-bounds} for $\partial_r^{p+1}\phi_{u,r}(x)$ in the integral
remainder, and differentiating under the integral for
the spatial estimate, proves
\eqref{eq:weighted-C1-defect-value} and
\eqref{eq:weighted-C1-defect-jacobian}.
\end{proof}

To compare the endpoint laws, we now express the numerical
transport as a correction of the exact flow.

\begin{lemma}[Endpoint transport bound]
\label{lem:endpoint-transport-bound}
Suppose that $P_0$ is supported in $B_R(0)$ and satisfies
Assumption~\ref{ass:uniform-edge-doubling}.
Let $0\le u<u+h\le T-\delta$ satisfy
\eqref{eq:section5-one-step-condition}, and let
$\Psi_{u,h}$ be a $C^1$ one-step map whose local error
$E_{u,h}=\Psi_{u,h}-\phi_{u,h}$ satisfies
\eqref{eq:weighted-C1-defect-value} and
\eqref{eq:weighted-C1-defect-jacobian}.
Writing $t=T-u$ and $s=t-h$, we have
\begin{equation}
    \operatorname{TV}\left(
        (\Psi_{u,h})_\#P_t,P_s
    \right)
    \le C_ph^{p+1}\mathfrak D_p(s),
    \label{eq:one-step-weighted-TV}
\end{equation}
where, for $t>0$,
\begin{equation}
\begin{aligned}
    \mathfrak D_p(t)
    :={}&
    \sqrt d\,(R+\sqrt d)
    e^{-t}b(t)^{p+3/2}
    \\
    &+
    \sqrt{d\,b(t)}
    e^{-t/2}\Theta_R(t)^{2p+1}
    \\
    &+
    d\,e^{-t}\Theta_R(t)^{2p+2}.
\end{aligned}
\label{eq:weighted-TV-density}
\end{equation}
\end{lemma}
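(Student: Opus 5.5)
Since $(\phi_{u,h})_\#P_t=P_s$, the plan is to write $\Psi_{u,h}=\phi_{u,h}+E_{u,h}$ and compare $\Psi_{u,h}$ with $\phi_{u,h}$ on $P_t$. Setting $e:=E_{u,h}\circ\phi_{u,h}^{-1}$, we have $\Psi_{u,h}=(\mathrm{id}+e)\circ\phi_{u,h}$, so
\[
(\Psi_{u,h})_\#P_t=(\mathrm{id}+e)_\#P_s,
\qquad
\operatorname{TV}\bigl((\Psi_{u,h})_\#P_t,P_s\bigr)=\operatorname{TV}\bigl((\mathrm{id}+e)_\#P_s,P_s\bigr).
\]
We then interpolate with $F_\theta(y):=y+\theta e(y)$ for $\theta\in[0,1]$. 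Lemma~\ref{lem:sec5-weighted-C1-local-defects} and the inverse Jacobian bound of Lemma~\ref{lem:section5-exact-step-bounds} give $\lVert D_ye\rVert_{\mathrm{op}}\le C_ph^{p+1}e^{-s}\Theta_*^{2p+2}\le\frac12$ after shrinking $c_p$. Hence each $F_\theta$ is a global diffeomorphism, by the same contraction argument used for the Runge--Kutta stages. The pushforward densities $\rho_\theta$ of $F_\theta$ satisfy the continuity equation $\partial_\theta\rho_\theta+\operatorname{div}(\rho_\theta w_\theta)=0$ with $w_\theta=e\circ F_\theta^{-1}$. This gives
\[
\operatorname{TV}\le\int_0^1\!\!\int|\operatorname{div}(\rho_\theta w_\theta)|\,dz\,d\theta .
\]
Pulling back to $y$ by the change of variables, the integrand becomes $\bigl|\operatorname{div}_y\text{-type expression}\bigr|$ weighted by $p_s(y)$. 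Concretely, with $J_\theta=\det(I+\theta De)$, the key identity is
\[
\frac{d}{d\theta}\,\frac{p_s(y)}{J_\theta(y)}\quad\text{combined with}\quad \rho_\theta(F_\theta y)=\frac{p_s(y)}{J_\theta(y)},
\]
and the $\theta$-derivative of $\rho_\theta(F_\theta y)$ is what we integrate. This yields the bound
\[
\operatorname{TV}\le\int_0^1\mathbb E_{Y\sim P_s}\Bigl[\,|\langle \nabla\log p_s(Y),e(Y)\rangle|+C\bigl|\operatorname{tr}\bigl((I+\theta De)^{-1}De\bigr)\bigr|+\text{(second-order terms)}\Bigr]d\theta .
\]
Rather than handle second derivatives of $e$, I would use the cleaner route $\operatorname{TV}(\mu,\nu)\le\int_0^1\|\partial_\theta\rho_\theta\|_{L^1}d\theta$ and move the divergence onto the test function: write $\operatorname{div}(\rho w)=\rho(\operatorname{div}w+\langle\nabla\log\rho,w\rangle)$. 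Only first derivatives of $e$ then appear, through $\operatorname{div}w_\theta$ and $\nabla\log\rho_\theta$, where $\nabla\log\rho_\theta$ is expressed through $\nabla\log p_s$ and $D\log J_\theta$.

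The three terms of $\mathfrak D_p(s)$ should arise as follows.
\begin{itemize}
\item The trace term $|\operatorname{tr}(\cdot)|\le d\,\lVert De\rVert_{\mathrm{op}}\cdot2\le C_p d\,h^{p+1}e^{-s}\Theta_*^{2p+2}$ gives the third term.
\item The score term needs $\mathbb E|\nabla\log p_s(Y)|^2\le d\,b(s)$, which holds since $\nabla\log p_s$ is the score of a Gaussian convolution of variance $1-e^{-s}$. By Cauchy--Schwarz it gives $\sqrt{d\,b(s)}\,(\mathbb E|e(Y)|^2)^{1/2}$.
\item To bound $|e(y)|$, use \eqref{eq:weighted-C1-defect-value} at $x=\phi_{u,h}^{-1}(y)$ together with the inverse value bound $|\phi^{-1}_{u,h}(y)|\le C_p(|y|+he^{-s/2}\Theta_*)$. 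This gives $|e(y)|\le C_ph^{p+1}[e^{-s}b_*^{p+1}|y|+e^{-s/2}\Theta_*^{2p+1}]$, after absorbing $he^{-3s/2}b_*^{p+1}\Theta_*$ via \eqref{eq:section5-absorption}.
\item Since $\mathbb E|Y|^2\le(R+\sqrt d)^2$ under $P_s$, the score term produces $\sqrt d(R+\sqrt d)e^{-s}b^{p+3/2}$ (the first term) and $\sqrt{db}\,e^{-s/2}\Theta^{2p+1}$ (the second term).
\end{itemize}

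\textbf{Main obstacle.} The delicate point is controlling $\nabla\log\rho_\theta$ at intermediate $\theta$, not just at $\theta=0$: $\rho_\theta\circ F_\theta=p_s/J_\theta$, so $\nabla\log\rho_\theta$ involves $D\log J_\theta$, which contains $D^2e$. To avoid second derivatives, I would not expand $\operatorname{div}(\rho_\theta w_\theta)$ pointwise. Instead, integrate against a bounded test function $g$ with $|g|\le1$, compute $\frac{d}{d\theta}\mathbb E_{P_s}[g(F_\theta Y)]=\mathbb E[\langle\nabla g(F_\theta Y),e(Y)\rangle]$, and integrate by parts in $y$ using $\nabla(g\circ F_\theta)=(I+\theta De)^{\top}\nabla g\circ F_\theta$. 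Inverting $(I+\theta De)^{\top}$ (norm $\le2$) gives $\mathbb E[g(F_\theta Y)\,\operatorname{div}_y(p_s\,(I+\theta De)^{-1}e)/p_s]$. The remaining derivative of $(I+\theta De)^{-1}$ still involves $D^2e$; if this cannot be avoided, I would fall back on a smooth approximation and bound $D^2e$ via Lemma~\ref{lem:sec3-weighted-directional-bound}, which supplies higher spatial derivatives at no extra dimension cost.

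A cleaner alternative is to use Pinsker-free direct comparison at $\theta$-level by replacing $F_\theta$ with the flow of a fixed vector field. Take $\tilde w_\theta=e\circ F_\theta^{-1}$ and use Lemma~\ref{lem:sec5-weighted-C1-local-defects} to show that $\tilde w_\theta$ is Lipschitz with constant $\le1$. Then $\rho_\theta$ has score $\nabla\log\rho_\theta=\bigl(\nabla\log p_s\text{ transported}\bigr)+O(d\cdot\text{Lip})$ in $L^1$, which is the estimate I would attempt first.
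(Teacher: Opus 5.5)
\textbf{There is a genuine gap.} Your reduction to $\operatorname{TV}\bigl((\mathrm{id}+e)_\#P_s,P_s\bigr)$ with $e=E_{u,h}\circ\phi_{u,h}^{-1}$ matches the paper. So do the interpolation $F_\theta=\mathrm{id}+\theta e$, the bound on $D_ye$, and your three bullet estimates. However, the central step is left unresolved, and the obstacle you flag is real for the route you chose.

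Differentiating the \emph{pushed-forward} density $\rho_\theta$ forces you to control $\nabla\log\rho_\theta(F_\theta(y))=(D_yF_\theta(y))^{-\top}\bigl[\nabla\log p_s(y)-\nabla\log J_\theta(y)\bigr]$, and $\nabla\log J_\theta$ is a trace of $D^2e$. Moving the divergence onto a test function only relocates this into $\operatorname{div}\bigl(p_s(I+\theta D_ye)^{-1}e\bigr)$.

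Your ``cleaner alternative'' is not correct as stated: the correction to the transported score is governed by $D^2e$, not by the Lipschitz constant of $w_\theta$. The fallback of bounding $D^2e$ is not supported by Lemmas~\ref{lem:section5-exact-step-bounds} and~\ref{lem:sec5-weighted-C1-local-defects}, which are only $C^1$ statements about $\phi_{u,h}^{\pm1}$ and $E_{u,h}$. Even granting such a bound, the extra contribution pairing $\nabla\log J_\theta$ with $w_\theta$ carries a factor $d$ from the trace. It multiplies $\mathbb E|e|$, which already contains $R+\sqrt d$. So it is unclear that it is dominated by $C_ph^{p+1}\mathfrak D_p(s)$ with a $d$-independent $C_p$ under only \eqref{eq:section5-one-step-condition}.

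\textbf{The missing idea} is to interpolate the \emph{pulled-back} density instead. With $G_\theta:=\mathrm{id}+\theta e$ and $G=G_1$, a change of variables gives $2\operatorname{TV}(P_s,G_\#P_s)=\int|H_1-H_0|\,dy$ for $H_\theta(y):=p_s(G_\theta(y))\det D_yG_\theta(y)$. Jacobi's formula then yields
\[
\partial_\theta H_\theta(y)
=\det D_yG_\theta(y)\Bigl[\nabla p_s\bigl(G_\theta(y)\bigr)\cdot e(y)
+p_s\bigl(G_\theta(y)\bigr)\operatorname{tr}\bigl((I+\theta D_ye(y))^{-1}D_ye(y)\bigr)\Bigr].
\]
This involves only $\nabla p_s$ and $D_ye$, so no second derivatives ever appear. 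Changing variables $z=G_\theta(y)$ in each $\theta$-slice converts the two pieces into $\int|\nabla p_s(z)|\,|e(G_\theta^{-1}(z))|\,dz$ and $\int p_s(z)\,|\operatorname{tr}(\cdots)|_{y=G_\theta^{-1}(z)}\,dz$.

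Your bullet estimates then finish the proof, with one adjustment. Since $e$ is now evaluated at $G_\theta^{-1}(z)$ rather than at a $P_s$-distributed point, you also need the inverse bound $|G_\theta^{-1}(z)|\le(|z|+B)/(1-A)$. Here $A=C_ph^{p+1}e^{-s}b_*^{p+1}\le\frac14$ and $B=C_ph^{p+1}e^{-s/2}\Theta_*^{2p+1}$, and the bound follows from $|e(y)|\le A|y|+B$. This gives $|e(G_\theta^{-1}(z))|\le2(A|z|+B)$. You can then apply Cauchy--Schwarz with $\lVert\nabla\log p_s(X_s)\rVert_{L^2}\le\sqrt{d\,b_*}$ and $\lVert X_s\rVert_{L^2}\le R+\sqrt d$.
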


\begin{proof}
Define
$
    G:=\Psi_{u,h}\circ\phi_{u,h}^{-1},
    \
    e:=E_{u,h}\circ\phi_{u,h}^{-1},
$
so that $G(y)=y+e(y)$.
Since the exact flow satisfies
$
    (\phi_{u,h})_\#P_t=P_s,
$
we have
$
    (\Psi_{u,h})_\#P_t=G_\#P_s.
$
It therefore remains to compare $G_\#P_s$ with $P_s$.

We first estimate the correction in the endpoint variable.
Set $x=\phi_{u,h}^{-1}(y)$, so that $e(y)=E_{u,h}(x)$.
Lemma~\ref{lem:section5-exact-step-bounds} gives
\begin{equation}\label{eq:temp estimate in lem4.5}
\begin{aligned}
    &|x|
    \le C_p\left[
        |y|+h e^{-s/2}\Theta_*
    \right],
    \\
    &\lVert{}D_y\phi_{u,h}^{-1}(y)\rVert_{\mathrm{op}}
    \le C_p.
\end{aligned}
\end{equation}
Substituting the first estimate in~\eqref{eq:temp estimate in lem4.5} into
\eqref{eq:weighted-C1-defect-value} yields
\begin{equation}
\begin{aligned}
    |e(y)|
    &\le C_ph^{p+1}\left[
        e^{-s}b_*^{p+1}|y|
        +
        h e^{-3s/2}b_*^{p+1}\Theta_*
        +
        e^{-s/2}\Theta_*^{2p+1}
    \right]
    \\
    &\le C_ph^{p+1}\left[
        e^{-s}b_*^{p+1}|y|
        +
        e^{-s/2}\Theta_*^{2p+1}
    \right].
\end{aligned}
\label{eq:endpoint-correction-value}
\end{equation}
The last step uses
\eqref{eq:section5-absorption} with $q=p$, since
\begin{equation*}
    h b_*^{p+1}\Theta_*
    \le c_p b_*^p\Theta_*
    \le c_p\Theta_*^{2p+1}.
\end{equation*}

For the spatial derivative, the chain rule gives
$
    D_ye(y)
    =
    D_xE_{u,h}(x)
    D_y\phi_{u,h}^{-1}(y).
$
Applying \eqref{eq:weighted-C1-defect-jacobian} and the
inverse Jacobian bound in~\eqref{eq:temp estimate in lem4.5}, we obtain
\begin{equation}
    \lVert{}D_ye(y)\rVert_{\mathrm{op}}
    \le
    C_ph^{p+1}e^{-s}\Theta_*^{2p+2}.
    \label{eq:endpoint-correction-jacobian}
\end{equation}
By \eqref{eq:section5-one-step-condition} and
\eqref{eq:endpoint-correction-jacobian}, after decreasing $c_p$
if necessary so that $c_p\leq(4C_p)^{-1/(p+1)}$, we obtain
\begin{equation*}
\begin{aligned}
    \sup_{y\in\mathbb R^d}
    \lVert{}D_ye(y)\rVert_{\mathrm{op}}
\le
    C_pe^{-s}\bigl(h\Theta_*^2\bigr)^{p+1}
\le C_pc_p^{p+1}
    \le\frac14.
\end{aligned}
\end{equation*}
For $0\le\theta\le1$, define
$
    G_\theta(y):=y+\theta e(y).
$ Then $G_0(y)=y$ and $G_1(y)=G(y)$.
The equation $G_\theta(y)=z$ is equivalent to
$y=z-\theta e(y)$, whose right-hand side is a contraction.
Thus $G_\theta$ is a global $C^1$ diffeomorphism, with
\begin{equation}\label{eq:inverse bound}
    \lVert(D_yG_\theta(y))^{-1}\rVert_{\mathrm{op}}
    =
    \lVert(I+\theta D_ye(y))^{-1}\rVert_{\mathrm{op}}
    \le\frac43.
\end{equation}
For each $y$, the determinant $\det D_yG_\theta(y)$
is continuous in $\theta$, never vanishes, and equals
$1$ at $\theta=0$. Hence it is positive throughout
the interpolation.

Let $p_s$ denote the density of $P_s$. Changing variables
under $G=G_1$ gives
\begin{equation*}
    2\operatorname{TV}(P_s,G_\#P_s)
    =
    \int_{\mathbb R^d}
    \left|
        p_s(G(y))\det D_yG(y)-p_s(y)
    \right|\,dy.
\end{equation*}
To compare these two terms, set
$
    H_\theta(y)
    :=
    p_s(G_\theta(y))\det D_yG_\theta(y).
$
Differentiating with respect to $\theta$ and using
Jacobi's formula yields
\begin{equation*}
\begin{aligned}
    \partial_\theta H_\theta(y)
    ={}&
    \det D_yG_\theta(y)\,
    \nabla p_s(G_\theta(y))\cdot e(y)
    \\
    &+
    p_s(G_\theta(y))\det D_yG_\theta(y)
    \operatorname{tr}\left[
        (I+\theta D_ye(y))^{-1}D_ye(y)
    \right].
\end{aligned}
\end{equation*}
Since
\begin{equation*}
    H_1(y)-H_0(y)
    =
    \int_0^1\partial_\theta H_\theta(y)\,d\theta,
\end{equation*}
we apply the triangle inequality and then change variables
$z=G_\theta(y)$ for each $\theta$. This gives
\begin{equation*}
\begin{aligned}
    2\operatorname{TV}(P_s,G_\#P_s)
    \leq{}&
    \int_0^1\int_{\mathbb R^d}
    |\nabla p_s(z)|
    \,|e(G_\theta^{-1}(z))|
    \,dz\,d\theta
    \\
    &+
    \int_0^1\int_{\mathbb R^d}
    p_s(z)
    \left|
        \operatorname{tr}\left[
            (I+\theta D_ye(y))^{-1}D_ye(y)
        \right]
    \right|_{y=G_\theta^{-1}(z)}
    \,dz\,d\theta.
\end{aligned}
\end{equation*}
We estimate the density-gradient term and the Jacobian
trace term separately.

For the density-gradient term, set
\begin{equation*}
    A:=C_ph^{p+1}e^{-s}b_*^{p+1},
    \qquad
    B:=C_ph^{p+1}e^{-s/2}\Theta_*^{2p+1},
\end{equation*}
so that \eqref{eq:endpoint-correction-value} gives
$
    |e(y)|\le A|y|+B.
$
$c_p\leq(4C_p)^{-1/(p+1)}$ also ensures
\begin{equation*}
    A
    \le C_p(h\Theta_*^2)^{p+1}
    \le C_pc_p^{p+1}
    \le\frac14.
\end{equation*}
For $z=G_\theta(y)$, we have
\begin{equation*}
    |y|
    \le |z|+\theta|e(y)|
    \le |z|+A|y|+B.
\end{equation*}
Hence
\begin{equation*}
    |G_\theta^{-1}(z)|
    \le\frac{|z|+B}{1-A},
\end{equation*}
and substituting this into the bound for $e$ gives
\begin{equation*}
    |e(G_\theta^{-1}(z))|
    \le
    \frac{A|z|+B}{1-A}
    \le 2(A|z|+B),
    \qquad 0\le\theta\le1.
\end{equation*}
Using $\nabla p_s=p_s\nabla\log p_s$ and
Cauchy--Schwarz, we obtain
\begin{equation*}
\begin{aligned}
    \int_{\mathbb R^d}
    |\nabla p_s(z)|
    \,|e(G_\theta^{-1}(z))|\,dz
&\le
    2A\,\mathbb E\left[
        |\nabla\log p_s(X_s)|\,|X_s|
    \right]
    +
    2B\,\mathbb E|\nabla\log p_s(X_s)|
    \\
    &\le
    2\lVert\nabla\log p_s(X_s)\rVert_{L^2}
    \left[
        A\lVert{}X_s\rVert_{L^2}+B
    \right].
\end{aligned}
\end{equation*}
At time $s$, the OU representation is
$
    X_s=e^{-s/2}X_0+\sqrt{1-e^{-s}}\,Z,
$
where $Z\sim N(0,I_d)$ is independent of $X_0$.
Tweedie's formula gives
\begin{equation*}
    \nabla\log p_s(X_s)
    =
    -\sqrt{b_*}\,\mathbb E[Z\mid X_s].
\end{equation*}
Conditional Jensen's inequality yields the following score bound,
while the same representation controls the second moment:
\begin{equation*}
    \lVert\nabla\log p_s(X_s)\rVert_{L^2}
    \le\sqrt{d\,b_*},
    \qquad
    \lVert{}X_s\rVert_{L^2}\le R+\sqrt d.
\end{equation*}
Substituting these estimates and the definitions of $A$
and $B$ gives, uniformly in $\theta$,
\begin{equation}\label{eq:density-gradient term}
\begin{aligned}
    &\int_{\mathbb R^d}
    |\nabla p_s(z)|
    \,|e(G_\theta^{-1}(z))|\,dz
    \\
    &\qquad\le
    C_ph^{p+1}\left[
        \sqrt d\,(R+\sqrt d)
        e^{-s}b_*^{p+3/2}
        +
        \sqrt{d\,b_*}\,
        e^{-s/2}\Theta_*^{2p+1}
    \right].
\end{aligned}
\end{equation}

For the Jacobian trace term, we use
$|\operatorname{tr}M|\le d\lVert{}M\rVert_{\mathrm{op}}$,
the inverse bound~\eqref{eq:inverse bound} for $D_yG_\theta$, and
\eqref{eq:endpoint-correction-jacobian}:
\begin{equation*}
\begin{aligned}
    \left|
        \operatorname{tr}\left[
            (I+\theta D_ye(y))^{-1}D_ye(y)
        \right]
    \right|
\le
    \frac43d\,\lVert{}D_ye(y)\rVert_{\mathrm{op}}
    \le
    C_pd h^{p+1}e^{-s}\Theta_*^{2p+2}.
\end{aligned}
\end{equation*}
This estimate is uniform in $y$ and $\theta$.
Since $p_s$ integrates to one, it also bounds the
integrated trace contribution,
\begin{equation}\label{eq:Jacobian trace term}
    \int_{\mathbb R^d}
    p_s(z)
    \left|
        \operatorname{tr}\left[
            (I+\theta D_ye(y))^{-1}D_ye(y)
        \right]
    \right|_{y=G_\theta^{-1}(z)}
    \,dz\leq C_p d h^{p+1}e^{-s}\Theta_*^{2p+2}.
\end{equation}

Integrating~\eqref{eq:density-gradient term} and~\eqref{eq:Jacobian trace term} over $\theta\in[0,1]$ yields
\begin{equation*}
\begin{aligned}
    \operatorname{TV}(P_s,G_\#P_s)
    \le C_ph^{p+1}\Bigl[
        &\sqrt d\,(R+\sqrt d)
        e^{-s}b_*^{p+3/2}
        \\
        &+\sqrt{d\,b_*}\,
        e^{-s/2}\Theta_*^{2p+1}
        \\
        &+d\,e^{-s}\Theta_*^{2p+2}
    \Bigr]
    =
    C_ph^{p+1}\mathfrak D_p(s).
\end{aligned}
\end{equation*}
The density-gradient term produces the first two terms
in $\mathfrak D_p$, through the score and second-moment
bounds. The Jacobian trace produces the third. Since $(\Psi_{u,h})_\#P_t=G_\#P_s$, this proves
\eqref{eq:one-step-weighted-TV}.
\end{proof}

\subsubsection{Exact-score TV convergence and comparison with Runge--Kutta analyses}

We now apply Lemma~\ref{lem:endpoint-transport-bound} along the
reverse-time grid. Since total variation does not increase under a
common measurable pushforward, the global error is bounded by the
initialization error plus the sum of the one-step errors.
The time weights in \eqref{eq:weighted-TV-density} are integrable
on $[\delta,\infty)$, so this summation introduces no additional
factor proportional to the integration time.

\begin{theorem}[Exact-score TV convergence from Gaussian initialization]
\label{thm:weighted-exact-tv-bounds}
Suppose that $P_0$ is supported in $B_R(0)$ and satisfies
Assumption~\ref{ass:uniform-edge-doubling}.
Fix an integer $p\geq1$, an early-stopping time $0<\delta\leq1$,
and a terminal time $T\geq1$ with $T>\delta$.
For an integer $N\geq1$, set
\begin{equation*}
    h=\frac{T-\delta}{N},
    \qquad
    u_n=nh,
    \qquad
    t_n=T-u_n,
    \qquad n=0,\ldots,N.
\end{equation*}
Assume that
\begin{equation}
    h\Theta_R(\delta)^2\leq c_p
    \label{eq:tv-global-step-condition}
\end{equation}
for a sufficiently small constant $c_p>0$.

Consider either
\begin{enumerate}
\item a fixed explicit Runge--Kutta method of classical order $p$
      with stage nodes in $[0,1]$;
\item the order-$p$ truncated Taylor scheme of
      Section~\ref{sec:global-convergence-taylor}.
\end{enumerate}
Let $\Psi_{u_n,h}$ denote the corresponding numerical step map
on $[u_n,u_{n+1}]$, and define the numerical laws by
\begin{equation*}
    Q_0=\gamma_d:=N(0,I_d),
    \qquad
    Q_{n+1}=(\Psi_{u_n,h})_\#Q_n,
    \qquad n=0,\ldots,N-1.
\end{equation*}
Then
\begin{equation}
\begin{aligned}
    \operatorname{TV}(P_\delta,Q_N)
    \leq{}&
    C\left(e^{-T/2}R+\sqrt d\,e^{-T}\right)
    \\
    &+
    C_p d(1+R)^{2p+2}\delta^{-(2p+1)}h^p.
\end{aligned}
\label{eq:weighted-global-TV-gaussian-init}
\end{equation}
The constant $C$ is universal.
The constants $c_p$ and $C_p$ depend only on $p$, the doubling
constant in Assumption~\ref{ass:uniform-edge-doubling}, and the
fixed Runge--Kutta tableau when applicable.
\end{theorem}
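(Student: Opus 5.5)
The proof will be a telescoping argument in total variation. The first ingredient is that a common measurable pushforward does not increase TV. Write $\Psi_n:=\Psi_{u_n,h}$ and $P_{t_n}$ for the exact marginals, so that $P_{t_0}=P_T$ and $P_{t_N}=P_\delta$. For each $n$ we have
\[
\operatorname{TV}(P_{t_{n+1}},Q_{n+1})
\le
\operatorname{TV}\bigl(P_{t_{n+1}},(\Psi_n)_\#P_{t_n}\bigr)
+
\operatorname{TV}\bigl((\Psi_n)_\#P_{t_n},(\Psi_n)_\#Q_n\bigr).
\]
The second term is at most $\operatorname{TV}(P_{t_n},Q_n)$. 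Iterating gives
\[
\operatorname{TV}(P_\delta,Q_N)
\le
\operatorname{TV}(P_T,\gamma_d)
+
\sum_{n=0}^{N-1}\operatorname{TV}\bigl((\Psi_n)_\#P_{t_n},P_{t_{n+1}}\bigr).
\]
Measurability of $\Psi_n$ is clear, since it is $C^1$.

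For the local terms, the global condition \eqref{eq:tv-global-step-condition} implies the one-step condition \eqref{eq:section5-one-step-condition} at every step, because $\Theta_R$ is decreasing and $s=t_{n+1}\ge\delta$. Lemma~\ref{lem:sec5-weighted-C1-local-defects} then shows that both schemes satisfy the defect hypotheses. Lemma~\ref{lem:endpoint-transport-bound} gives $\operatorname{TV}\le C_ph^{p+1}\mathfrak D_p(t_{n+1})$ for each step. The key step is to convert $h\sum_n\mathfrak D_p(t_{n+1})$ into an integral. Each of the three terms of $\mathfrak D_p$ is $e^{-ct}$ times a power of the decreasing function $b$, with $c\in\{1/2,1\}$. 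I will not claim monotonicity of the whole weight. Instead I bound $b(t)\le b(\delta)$ and $\Theta_R(t)\le\Theta_R(\delta)\le C\Lambda_{R,\delta}$ uniformly, and handle the exponentials with
\[
h\sum_{n}e^{-ct_{n+1}}\le e^{ch}\int_\delta^T e^{-ct}\,dt\le C,
\]
which uses $h\le c_p\le1$.

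This yields the bound
\[
C_ph^p\Bigl[\sqrt d(R+\sqrt d)b(\delta)^{p+3/2}+\sqrt{d\,b(\delta)}\,\Lambda_{R,\delta}^{2p+1}+d\,\Lambda_{R,\delta}^{2p+2}\Bigr].
\]
Using $b(\delta)\le C/\delta$ and $R+\sqrt d\le(1+R)\sqrt d$, the first term is at most $d(1+R)\delta^{-(p+3/2)}$. The second is at most $\sqrt d(1+R)^{2p+1}\delta^{-(2p+3/2)}$, and the third is $d(1+R)^{2p+2}\delta^{-(2p+2)}$.

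Here I expect the main obstacle: matching the claimed power $\delta^{-(2p+1)}$. The crude uniform bound on $\Theta_R$ loses a factor $\delta^{-1}$ in the third term, and a factor $\delta^{-1/2}$ in the second. To recover the exponent I would keep the time dependence inside the integral rather than bounding $b$ uniformly. Since $b'(t)=-e^{-t}b(t)^2$, we have
\[
\int_\delta^T e^{-t}b(t)^{2p+2}\,dt=\int_\delta^Tb^{2p}\,d(-b)\le\frac{b(\delta)^{2p+1}}{2p+1},
\]
which gives exactly $(1+R)^{2p+2}\delta^{-(2p+1)}$ for the third term. To pass from the Riemann sum to this integral I use that $e^{-t}b(t)^{2p+2}$ is decreasing, so the sum with right endpoints $t_{n+1}$ is bounded by the integral over $[t_{n+1}-h,t_{n+1}]$. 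The first interval $[\delta-h,\delta]$ needs separate treatment because it extends below $\delta$. That single term costs $h\,e^{-\delta}b(\delta)^{2p+2}$, and the step condition reduces it to order $b(\delta)^{2p+1}$. The second term is handled similarly via
\[
\int_\delta^T e^{-t/2}b^{2p+3/2}\,dt\le\int_\delta^T e^{-t}b^{2p+2}\,dt+\int_\delta^Te^{-t/2}\,dt,
\]
using $e^{-t/2}\le e^{-t}b(t)^{1/2}$, which is equivalent to $e^t-1\le e^t$. The first term is likewise integrated as $b(\delta)^{p+1/2}$. All three contributions are then dominated by $d(1+R)^{2p+2}\delta^{-(2p+1)}$.

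For initialization, Pinsker's inequality together with $\operatorname{KL}(P_T\,\|\,\gamma_d)$ would be the first attempt, but here I would argue more directly. $P_T$ is a mixture over $y\sim P_0$ of $N(e^{-T/2}y,(1-e^{-T})I)$. By joint convexity of TV, $\operatorname{TV}(P_T,\gamma_d)$ is at most the supremum over $|y|\le R$ of the TV distance from $N(e^{-T/2}y,(1-e^{-T})I)$ to $N(0,I)$. I split this by the triangle inequality into a mean-shift part, bounded by $Ce^{-T/2}R$ using the Gaussian TV bound $|\Delta\mu|/(2\sigma)$ with $\sigma^2=1-e^{-T}\ge1-e^{-1}$ for $T\ge1$, and a covariance part. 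The covariance part is at most $C\sqrt d\,e^{-T}$ by Pinsker applied to the KL divergence between $N(0,(1-e^{-T})I)$ and $N(0,I)$, which is of order $d\,e^{-2T}$. Together these give the first term of \eqref{eq:weighted-global-TV-gaussian-init} with a universal constant $C$.
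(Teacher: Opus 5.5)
Your proposal is correct and follows essentially the paper's proof: TV telescoping via pushforward contraction, Lemma~\ref{lem:endpoint-transport-bound} at each step, integration of the weights using $b'=-e^{-t}b^2$, and Pinsker for the initialization. The variants are minor: you compare the Riemann sum with the integral by monotonicity plus a separate boundary term at $\delta$, where the paper uses a log-derivative comparison on $[t_{n+1},t_n]$, and you bound the initialization by TV convexity with a mean/covariance split, where the paper uses KL convexity.

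One slip needs fixing. The inequality $e^{-t/2}\le e^{-t}b(t)^{1/2}$ is equivalent to $e^t-1\le 1$, not to $e^t-1\le e^t$, so it holds only for $t\le\log 2$. For $t\ge\log 2$ use $b\le 2$ instead. Your middle-term inequality then holds with a factor $2^{2p+3/2}$ in front of $\int_\delta^T e^{-t/2}\,dt$, which is absorbed into $C_p$.
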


The first term in \eqref{eq:weighted-global-TV-gaussian-init}
accounts for replacing $P_T$ by the standard Gaussian law and decays
exponentially in $T$. The second is the accumulated discretization
error, whose dependence on dimension and step size is $d\,h^p$ for
fixed target parameters and early-stopping time.

\begin{proof}
Set
$
    \Psi_n:=\Psi_{u_n,h},
    \
    e_n:=\operatorname{TV}(Q_n,P_{t_n}).
$
The exact reverse flow satisfies
$
    (\phi_{u_n,h})_\#P_{t_n}=P_{t_{n+1}}.
$
Since $t_{n+1}\geq\delta$ and $\Theta_R$ is decreasing,
\eqref{eq:tv-global-step-condition} gives
\begin{equation*}
    h\Theta_R(t_{n+1})^2
    \leq h\Theta_R(\delta)^2
    \leq c_p.
\end{equation*}
Thus Lemmas~\ref{lem:sec5-weighted-C1-local-defects}
and~\ref{lem:endpoint-transport-bound} apply to every step,
with endpoint forward time $s=t_{n+1}$.

By the triangle inequality and the fact that a common measurable
pushforward does not increase total variation,
\begin{equation*}
\begin{aligned}
    e_{n+1}
    &\leq
    \operatorname{TV}
    \left(
        (\Psi_n)_\#Q_n,
        (\Psi_n)_\#P_{t_n}
    \right)
    +
    \operatorname{TV}
    \left(
        (\Psi_n)_\#P_{t_n},
        P_{t_{n+1}}
    \right)
    \\
    &\leq
    e_n+C_p h^{p+1}\mathfrak D_p(t_{n+1}),
\end{aligned}
\end{equation*}
where the second step is by Lemma~\ref{lem:endpoint-transport-bound} and $\mathfrak D_p$ is defined in~\eqref{eq:weighted-TV-density}.

Iterating this inequality yields
\begin{equation*}
    e_N
    \leq
    e_0+C_p h^{p+1}
    \sum_{n=0}^{N-1}\mathfrak D_p(t_{n+1}).
\end{equation*}

We next compare the time sum with an integral.
The time-dependent factors in \eqref{eq:weighted-TV-density}
have the form
\begin{equation*}
    f_{a,q}(t):=e^{-at}b(t)^q,
    \qquad
    a\in\left\{\frac12,1\right\},
    \qquad
    1<q\leq2p+2.
\end{equation*}
For $t\geq\delta$,
\begin{equation*}
    \left|
        \frac{d}{dt}\log f_{a,q}(t)
    \right|
    =
    a+q e^{-t}b(t)
    \leq a+q b(\delta).
\end{equation*}
Consequently, for $t\in[t_{n+1},t_n]$,
$
    f_{a,q}(t_{n+1})
    \leq
    \exp\bigl(h[a+q b(\delta)]\bigr)f_{a,q}(t).
$
Since $1\leq b(\delta)\leq\Theta_R(\delta)^2$,
the step-size condition implies
$
    h[a+q b(\delta)]
    \leq(a+q)c_p.
$
The exponential factor is therefore bounded by $C_p$, and
integration over $[t_{n+1},t_n]$ gives
\begin{equation*}
    h f_{a,q}(t_{n+1})
    \leq
    C_p\int_{t_{n+1}}^{t_n}f_{a,q}(t)\,dt.
\end{equation*}
Applying this estimate to the three terms in
$\mathfrak D_p$ and summing over $n$, we obtain
\begin{equation*}
    h\sum_{n=0}^{N-1}\mathfrak D_p(t_{n+1})
    \leq
    C_p\int_\delta^T\mathfrak D_p(t)\,dt.
\end{equation*}
For the first and third terms of $\mathfrak D_p$, the identity
$b'(t)=-e^{-t}b(t)^2$ gives, for every $q>1$,
\begin{equation*}
    \int_\delta^T e^{-t}b(t)^q\,dt
    =
    \frac{b(\delta)^{q-1}-b(T)^{q-1}}{q-1}
    \leq
    \frac{b(\delta)^{q-1}}{q-1}.
\end{equation*}
For the middle term, write
\begin{equation*}
    \sqrt{d\,b(t)}e^{-t/2}\Theta_R(t)^{2p+1}
    =
    \sqrt d(1+R)^{2p+1}
    e^{-t/2}b(t)^{2p+3/2}.
\end{equation*}
Using $b(t)\asymp t^{-1}$ on $(0,1]$ and the boundedness of
$b$ on $[1,\infty)$, we find
\begin{equation*}
\begin{aligned}
    \int_\delta^T e^{-t/2}b(t)^{2p+3/2}\,dt
    &\leq
    C_p\left(
        \int_\delta^1 t^{-(2p+3/2)}\,dt+1
    \right)
    \\
    &\leq C_p b(\delta)^{2p+1/2}.
\end{aligned}
\end{equation*}
Substituting these bounds into \eqref{eq:weighted-TV-density}
yields
\begin{equation*}
\begin{aligned}
    \int_\delta^T\mathfrak D_p(t)\,dt
    &\leq
    C_p\sqrt d(R+\sqrt d)b(\delta)^{p+1/2}
    \\
    &\quad+
    C_p\sqrt d(1+R)^{2p+1}b(\delta)^{2p+1/2}
    \\
    &\quad+
    C_p d(1+R)^{2p+2}b(\delta)^{2p+1}
    \\
    &\leq
    C_p d(1+R)^{2p+2}b(\delta)^{2p+1},
\end{aligned}
\end{equation*}
where the last inequality uses $d\geq1$, $1+R\geq1$,
and $b(\delta)\geq1$.
It follows that
\begin{equation*}
    e_N
    \leq
    e_0+
    C_p d(1+R)^{2p+2}b(\delta)^{2p+1}h^p.
\end{equation*}

It remains to bound the initialization error
$e_0=\operatorname{TV}(\gamma_d,P_T)$.
The forward OU representation gives
\begin{equation*}
    X_T\mid X_0
    \sim
    N\left(e^{-T/2}X_0,(1-e^{-T})I_d\right).
\end{equation*}
By convexity of relative entropy and the Gaussian
relative-entropy formula,
\begin{equation*}
\begin{aligned}
    \operatorname{KL}(P_T\|\gamma_d)
    &\leq
    \mathbb E\left[
        \operatorname{KL}\left(
            N(e^{-T/2}X_0,(1-e^{-T})I_d)
            \,\|\,\gamma_d
        \right)
    \right]
    \\
    &=
    \frac12 e^{-T}\mathbb E|X_0|^2
    +
    \frac d2
    \left[-e^{-T}-\log(1-e^{-T})\right]
    \\
    &\leq
    C\left(e^{-T}R^2+d e^{-2T}\right).
\end{aligned}
\end{equation*}
Here we used $|X_0|\leq R$ almost surely and
\begin{equation*}
    -e^{-T}-\log(1-e^{-T})
    \leq C e^{-2T},
    \qquad T\geq1.
\end{equation*}
Pinsker's inequality therefore yields
$
    e_0
    \leq
    C\left(e^{-T/2}R+\sqrt d\,e^{-T}\right).
$
Combining the two error bounds and using
$b(\delta)\leq C\delta^{-1}$ for $0<\delta\leq1$, we conclude that
\begin{equation*}
\begin{aligned}
    \operatorname{TV}(P_\delta,Q_N)
    \leq
    C\left(e^{-T/2}R+\sqrt d\,e^{-T}\right)
+
    C_p d(1+R)^{2p+2}\delta^{-(2p+1)}h^p,
\end{aligned}
\end{equation*}
which proves \eqref{eq:weighted-global-TV-gaussian-init}.
\end{proof}

We now choose the terminal time and step size to reach a prescribed
accuracy in total variation.

\begin{corollary}[Dimension dependence of the exact-score TV complexity]
\label{cor:weighted-TV-complexity}
Under the assumptions of Theorem~\ref{thm:weighted-exact-tv-bounds},
let $0<\varepsilon\leq1$. Set
\begin{equation*}
    T=\log\frac{C_0(R^2+d)}{\varepsilon^2}
\end{equation*}
and
\begin{equation*}
    h_*=
    \kappa_p
    \min\left\{
        \Theta_R(\delta)^{-2},
        \left(
            \frac{\varepsilon\,\delta^{2p+1}}
                 {d(1+R)^{2p+2}}
        \right)^{1/p}
    \right\},
\end{equation*}
where $C_0\geq e^2$ is a sufficiently large universal constant
and $\kappa_p>0$ is sufficiently small, depending only on $p$,
the doubling constant, and the fixed Runge--Kutta tableau
when applicable.
Choose
\begin{equation*}
    N=\left\lceil\frac{T-\delta}{h_*}\right\rceil,
    \qquad
    h=\frac{T-\delta}{N}.
\end{equation*}
Then
\begin{equation*}
    \operatorname{TV}(P_\delta,Q_N)\leq\varepsilon.
\end{equation*}
For fixed $p$, $R$, $\delta$, the doubling constant, and the
Runge--Kutta tableau when applicable, the resulting step
complexity in the high-accuracy regime
\begin{equation}\label{eq:tv-high-accuracy-regime}
    \varepsilon
    \le
    d\,(1+R)^{2p+2}\,\delta^{-(2p+1)}\,\Theta_R(\delta)^{-2p},
\end{equation}
in which the accuracy constraint on $h_*$ is the binding one, is
\begin{equation}
    N
    =
    \widetilde O_{p,R,\delta}
    \left(
        d^{1/p}\varepsilon^{-1/p}
    \right).
    \label{eq:weighted-TV-complexity}
\end{equation}
\end{corollary}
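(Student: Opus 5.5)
The plan is to verify that the prescribed $(T,h)$ satisfy the hypotheses of Theorem~\ref{thm:weighted-exact-tv-bounds}, so that its bound \eqref{eq:weighted-global-TV-gaussian-init} can be applied directly. Each of its two terms will then be made at most $\varepsilon/2$, after which $N$ is counted.

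\emph{Step-size hypotheses.} Since $h=(T-\delta)/N\le h_*\le\kappa_p\Theta_R(\delta)^{-2}$, taking $\kappa_p\le c_p$ gives the step condition \eqref{eq:tv-global-step-condition}. We also need $T\ge1$ and $T>\delta$. Both follow from $C_0\ge e^2$ and $\varepsilon\le1$: these give $T\ge\log C_0\ge2>\delta$. In particular $N\ge1$.

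\emph{Initialization term.} With $e^{-T}=\varepsilon^2/(C_0(R^2+d))$ we get
\[
e^{-T/2}R\le \varepsilon/\sqrt{C_0}
\qquad\text{and}\qquad
\sqrt d\,e^{-T}\le \varepsilon^2/(C_0\sqrt d)\le\varepsilon/C_0 .
\]
Hence $C(e^{-T/2}R+\sqrt d e^{-T})\le\varepsilon/2$ once $C_0$ is large compared with the universal $C$.

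\emph{Discretization term.} Using $h\le h_*\le\kappa_p(\varepsilon\delta^{2p+1}/(d(1+R)^{2p+2}))^{1/p}$,
\[
C_pd(1+R)^{2p+2}\delta^{-(2p+1)}h^p\le C_p\kappa_p^p\varepsilon\le\varepsilon/2
\]
for $\kappa_p$ small. Summing the two halves gives $\operatorname{TV}(P_\delta,Q_N)\le\varepsilon$.

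\emph{Counting steps.} We have $N\le 1+T/h_*$. In the regime \eqref{eq:tv-high-accuracy-regime}, raising to the power $1/p$ shows
\[
\left(\frac{\varepsilon\delta^{2p+1}}{d(1+R)^{2p+2}}\right)^{1/p}\le\Theta_R(\delta)^{-2},
\]
so the accuracy constraint is the minimum. Then
\[
N\lesssim \kappa_p^{-1}\log\!\frac{C_0(R^2+d)}{\varepsilon^2}\,\Bigl(\frac{d(1+R)^{2p+2}}{\delta^{2p+1}}\Bigr)^{1/p}\varepsilon^{-1/p}.
\]
With $p,R,\delta$ fixed, the logarithm is absorbed into $\widetilde O$, which gives $\widetilde O(d^{1/p}\varepsilon^{-1/p})$.

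The argument is routine throughout. The only point requiring care is that the constants stay consistent: $\kappa_p$ must be chosen after $c_p$ and $C_p$ from the theorem, and $C_0$ after the universal $C$. No step depends on $d$.
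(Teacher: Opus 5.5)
Your proposal is correct and follows the paper's proof essentially step for step. You verify $T\ge 2>\delta$ and the step condition via $\kappa_p\le c_p$. You split the bound of Theorem~\ref{thm:weighted-exact-tv-bounds} into two halves of at most $\varepsilon/2$, using $C_0$ large and $C_p\kappa_p^p\le 1/2$. You then count $N$ from the ceiling, with the accuracy constraint binding in the regime \eqref{eq:tv-high-accuracy-regime} and the logarithm in $T$ absorbed into $\widetilde O$.
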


\begin{proof}
Since $R^2+d\geq1$ and $\varepsilon\leq1$, the choice
$C_0\geq e^2$ ensures $T\geq2>\delta$.
Moreover,
\begin{equation*}
    e^{-T/2}R
    \leq\frac{\varepsilon}{\sqrt{C_0}},
    \qquad
    \sqrt d\,e^{-T}
    \leq\frac{\varepsilon}{C_0}.
\end{equation*}
Taking $C_0$ sufficiently large therefore makes the
initialization term in
\eqref{eq:weighted-global-TV-gaussian-init}
at most $\varepsilon/2$.

The definition of $N$ ensures $h\leq h_*$, so
\begin{equation*}
    h\Theta_R(\delta)^2\leq\kappa_p,\qquad
    C_p d(1+R)^{2p+2}\delta^{-(2p+1)}h^p
    \leq C_p\kappa_p^p\varepsilon.
\end{equation*}
Choosing $\kappa_p\leq c_p$ and
$C_p\kappa_p^p\leq1/2$ verifies
\eqref{eq:tv-global-step-condition} and makes the
discretization term at most $\varepsilon/2$.
Theorem~\ref{thm:weighted-exact-tv-bounds} then gives the
claimed accuracy.

Finally, the rounding in the definition of $N$ gives
\begin{equation*}
    N
    \leq
    1+\frac{T-\delta}{\kappa_p}
    \max\left\{
        \Theta_R(\delta)^2,
        \left(
            \frac{d(1+R)^{2p+2}}
                 {\varepsilon\,\delta^{2p+1}}
        \right)^{1/p}
    \right\}.
\end{equation*}
Under \eqref{eq:tv-high-accuracy-regime} the second term in the
maximum dominates.
Substituting the chosen value of $T$ and absorbing its
logarithmic dependence into $\widetilde O$ proves
\eqref{eq:weighted-TV-complexity}.
\end{proof}

\begin{remark}
\label{rem:dimension-sources}
The Runge--Kutta bounds of \cite{huang2025convergence,huang2025fast}
contain a leading exact-score discretization term of order
$d^{p+1}h^p$, corresponding to
$\widetilde O(d^{1+1/p}\varepsilon^{-1/p})$ steps for fixed order
and remaining parameters, whereas
Corollary~\ref{cor:weighted-TV-complexity} gives $d\,h^p$ and
$\widetilde O(d^{1/p}\varepsilon^{-1/p})$. The gap of $d^p$ has two
sources, both located in how the regularity of the score is
controlled.

First, in \cite{huang2025convergence} the score and its derivatives
are estimated through Tweedie's formula, which carries an explicit
factor $|x|$: $|D_x\log p_t(x)|\le b(t)(|x|+R)$ under compact support,
and the same $|x|$ persists in every derivative along a step.
Repeated differentiation of the stage map by the Fa\`a di Bruno
formula multiplies up to $p$ such factors, so the $p$-th derivative
along a step is bounded by an expression of the form
$C\bigl((\sqrt d+|x|)\sqrt d\,\kappa\bigr)^p$, and averaging $|x|^p$
over $P_t$ costs a further $d^{p/2}$. Second, the regularity
hypothesis there is componentwise: it bounds the individual entries of
the derivative tensors of the score, and each contraction of such a
tensor against a vector is a passage from $\ell^\infty$ to $\ell^2$
that costs a factor $\sqrt d$; an order-$p$ scheme performs $p$ such
contractions. Together these give $d^{p}$ in the local defect, and the
conversion to total variation adds one more power.

In the present analysis both sources are absent. The normal-direction
fluctuation bound of Theorem~\ref{thm:sec-3-normal-fluctuation-two-scale}
is uniform in $x$: no $|x|$ term appears in
$\mathbb E_{\mu_{x,t}}|\langle Z,x\rangle|^q$. Consequently the
decomposition $L^jx=A_j(t)x+\widetilde A_j(t)\bar y_{x,t}+R_j$ of
Proposition~\ref{prop:quantitative-normal-form-decomposition} confines
the dependence on $x$ to a single affine term with coefficient of
order $e^{-t}b(t)^j$, and the remainder is bounded uniformly in $x$.
The stage induction of
Lemma~\ref{lem:weighted-rk-stage-regularity} preserves this
affine-plus-remainder structure at every stage, so nesting does not
raise the degree in $|x|$ above one. Moreover, all estimates of
Section~\ref{sec:sec3-centered-posterior-moment-normal-form} are
operator-norm statements
(Lemma~\ref{lem:sec3-weighted-directional-bound}), so no
$\ell^\infty$-to-$\ell^2$ conversion occurs. As a result the local
defect bounds of Lemma~\ref{lem:sec5-weighted-C1-local-defects} carry
constants independent of $d$, and the dimension enters the proof at
exactly three points of Lemma~\ref{lem:endpoint-transport-bound}:
the second moment $\lVert X_s\rVert_{L^2}\le R+\sqrt d$, the score
bound $\lVert D_x\log p_s(X_s)\rVert_{L^2}\le\sqrt{d\,b(s)}$, and the
trace inequality $|\operatorname{tr}M|\le d\lVert M\rVert_{\mathrm{op}}$,
each contributing at most one power of $d$. We therefore list the
complexity bounds in Table~\ref{tab:intro-dimension-comparison}.
\end{remark}

\subsection{Numerical illustration}
\label{subsec:numerical-illustration}

We complement the theoretical results with controlled numerical experiments
designed to examine the dependence of the discretization error on both the
time step and the ambient dimension.  The purpose of these experiments is
not to assess the end-to-end performance of a learned diffusion model, but
rather to isolate the numerical error of the reverse probability-flow ODE
in a setting where the score and the reference flow can be computed to high
accuracy.

We consider the dimension-independent two-point target
\begin{equation*}
    P_0^{(d)}
    =
    \frac12\delta_{- e_1}
    +
    \frac12\delta_{ e_1},
\end{equation*}
embedded in $\mathbb R^d$.  This family has uniformly bounded support and
satisfies the uniform doubling assumption with a constant independent of
$d$; in fact, one may take $D_{\mathrm{UD}}\leq 2$.

For the forward OU process~\eqref{eq:standardOU}, the marginal law is the Gaussian mixture
\begin{equation*}
    P_t^{(d)}
    =
    \frac12
    N(-\alpha(t)e_1,\beta(t)^2 I_d)
    +
    \frac12
    N(\alpha(t)e_1,\beta(t)^2 I_d),
\end{equation*}
and its score admits the closed-form expression
\begin{equation*}
    D_x \log p_t(x)
    =
    -\frac{x}{\beta(t)^2}
    +
    \frac{\alpha(t) }{\beta(t)^2}
    \tanh\left(
        \frac{\alpha(t)  x_1}{\beta(t)^2}
    \right)e_1.
\end{equation*}
Consequently, the reverse probability-flow velocity used in our
parametrization is
\begin{equation*}
\begin{aligned}
    \frac12\bigl(x+D_x\log p_t(x)\bigr) 
    =
    -\frac{e^{-t}}{2(1-e^{-t})}x
    +
    \frac{ e^{-t/2}}{2(1-e^{-t})}
    \tanh\left(
        \frac{ e^{-t/2}x_1}{1-e^{-t}}
    \right)e_1 .
\end{aligned}
\end{equation*}
Thus no score approximation is involved in the experiment.

For the Runge--Kutta experiments, we use explicit Euler,
the explicit midpoint method, Kutta's third-order method,
and the classical fourth-order Runge--Kutta method for
$p=1,2,3,4$, respectively.
All stage nodes lie in $[0,1]$.
Both experiments use MATLAB with the random seed fixed to $1$.

To isolate the discretization error, both the exact and numerical reverse
flows are initialized from the same samples
\begin{equation*}
    X_T^{(i)}\sim P_T^{(d)}.
\end{equation*}
Let $Y^{(i)}$ denote the endpoint of the exact reverse flow
at time $\delta$. Write $X_{T,k}^{(i)}$ and $Y_k^{(i)}$ for the $k$-th
coordinates of $X_T^{(i)}$ and $Y^{(i)}$, respectively.
Writing $\mathcal F_t$ for the distribution function of the
first coordinate under $P_t^{(d)}$, the exact endpoint satisfies
\begin{equation*}
\begin{aligned}
    Y_1^{(i)}
    &=
    \mathcal F_\delta^{-1}
    \bigl(\mathcal F_T(X_{T,1}^{(i)})\bigr),
    \\
    Y_k^{(i)}
    &=
    \frac{\beta(\delta)}{\beta(T)}X_{T,k}^{(i)},
    \qquad k=2,\ldots,d.
\end{aligned}
\end{equation*}
In the Runge--Kutta experiments, the inverse distribution
function is evaluated by $90$ bisection iterations on
$[-20,20]$, with its probability argument clipped to
$[10^{-15},1-10^{-15}]$.
Let $Y_{p,h}^{(i)}$ denote the endpoint produced by a
$p$-th order numerical method with step size $h$.
We measure the error by the synchronous-coupling quantity
\begin{equation}\label{eq:numerical-coupled-error}
    \widehat E_{p,h,d}
    :=
    \left(
        \frac1M
        \sum_{i=1}^M
        \left|
            Y^{(i)}-Y_{p,h}^{(i)}
        \right|^2
    \right)^{1/2}.
\end{equation}
Since the exact and numerical flows
use the same initialization, \eqref{eq:numerical-coupled-error} measures
only the time-discretization error and excludes the separate initialization
error arising from replacing $P_T$ by the standard Gaussian.
\paragraph{Time-step convergence.}
We first study the convergence with respect to the step size. We set
$T=3$, $\delta=0.5$, $d=50$, and $M=10^4$, and use
\begin{equation*}
    N\in\{20,40,80,160,320,640,1280\},
    \qquad
    h=\frac{T-\delta}{N}.
\end{equation*}
Figure~\ref{fig:numerical-two-point}(a)--(b) shows the coupled error on a
log--log scale for the truncated Taylor schemes of orders $p=1,2,3,4$
and, for comparison, explicit Runge--Kutta schemes of the corresponding
orders. The fitted slopes reported in Figure~\ref{fig:taylor-stepsize}--\ref{fig:rk-stepsize}
are close to the corresponding orders $p=1,2,3,4$, in agreement with
the expected $p$-th order behavior
$\widehat E_{p,h,d}\asymp h^p$.
In particular, the Taylor results numerically confirm the high-order
time-discretization rate established by the preceding analysis.

\paragraph{Ambient-dimension scaling.}
Keeping $T=3$ and $\delta=0.5$, we next fix
$h=0.015625$ and $M=10^4$, and vary the ambient dimension over
\begin{equation*}
    d\in\{5,10,20,40,80,160,320,640,1280,2560,5120\}.
\end{equation*}
Figure~\ref{fig:numerical-two-point}(c)--(d) displays the resulting error
as a function of $d$. The fitted slopes reported in Figure~\ref{fig:numerical-two-point}(c)--(d)
are all close to $1/2$, consistent with the predicted
$\sqrt d$ dependence of the error
\(
    \widehat E_{p,h,d}\asymp d^{1/2}
\)
in the high-dimensional regime.

Taken together, the experiments exhibit the joint behavior
\begin{equation*}
    \widehat E_{p,h,d}
    \asymp
    C_p\,\sqrt d\,h^p
\end{equation*}
for this exactly tractable benchmark, in agreement with the
time-discretization and ambient-dimension dependence predicted by the
theory.
\begin{figure}[htbp]
    \centering

    \begin{subfigure}[t]{0.48\linewidth}
        \centering
        \includegraphics[width=0.94\linewidth]
        {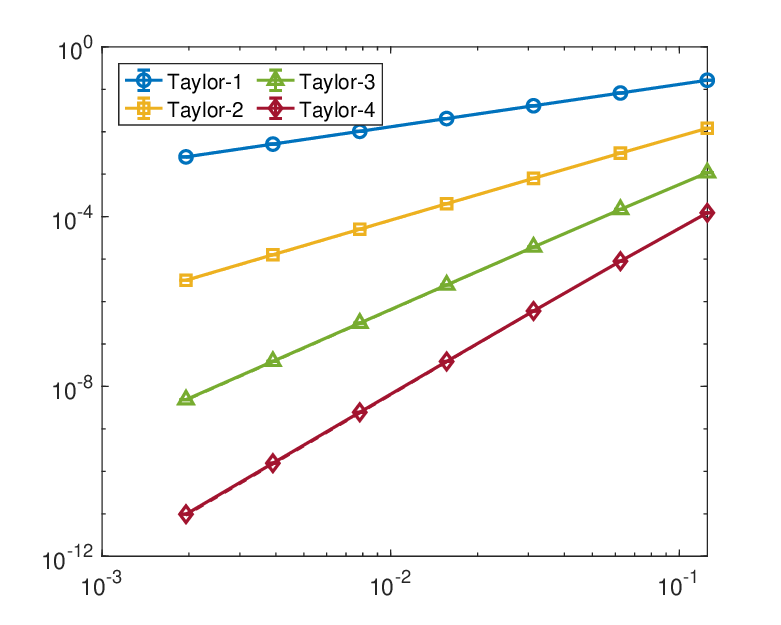}
        \caption{
    Taylor schemes: step-size convergence.
    Fitted slopes for $p=1,2,3,4$, respectively:
    $1.000$, $1.994$, $2.987$, $3.977$.
}
        \label{fig:taylor-stepsize}
    \end{subfigure}
    \hfill
    \begin{subfigure}[t]{0.48\linewidth}
        \centering
        \includegraphics[width=0.94\linewidth]
        {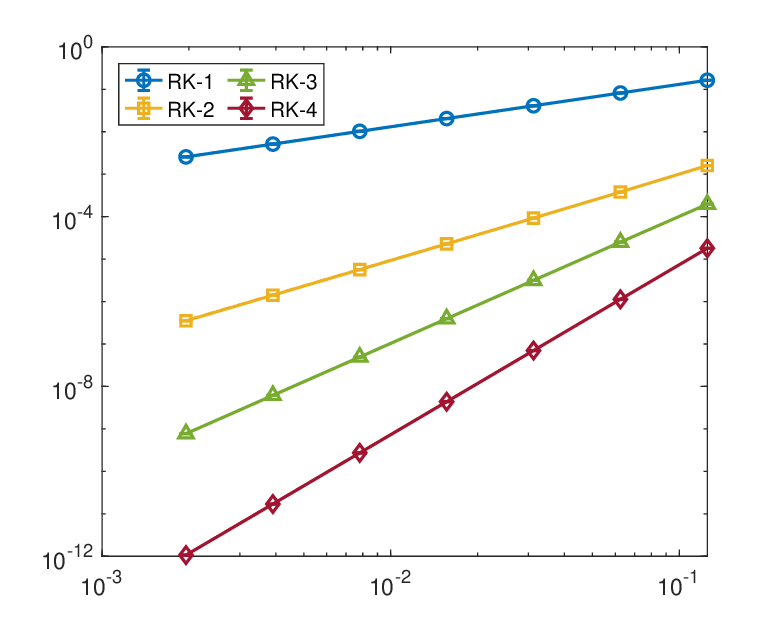}
       \caption{
    Runge--Kutta schemes: step-size convergence.
    Fitted slopes for orders $p=1,2,3,4$, respectively:
    $1.000$, $2.009$, $2.999$, $4.002$.
}
        \label{fig:rk-stepsize}
    \end{subfigure}

    \vspace{-0.2em}

    \begin{subfigure}[t]{0.48\linewidth}
        \centering
        \includegraphics[width=\linewidth]
        {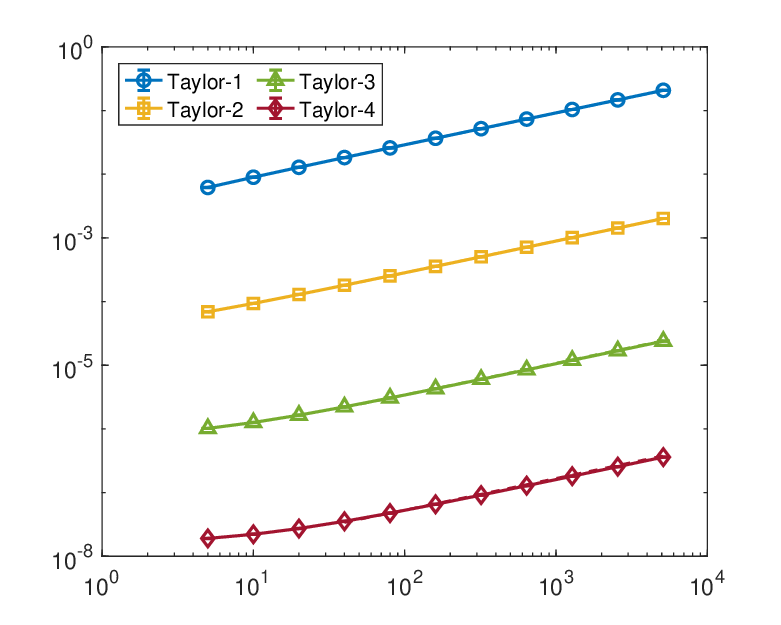}
        \caption{
    Taylor schemes: ambient-dimension scaling.
    Fitted slopes for $p=1,2,3,4$, respectively:
    $0.501$, $0.498$, $0.491$, $0.482$.
}
       
    \end{subfigure}
    \hfill
    \begin{subfigure}[t]{0.48\linewidth}
        \centering
        \includegraphics[width=\linewidth]
        {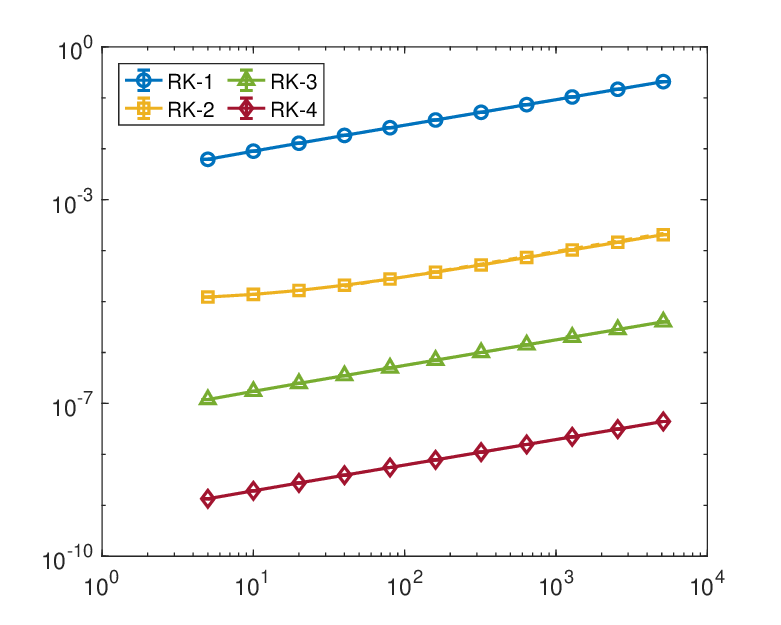}
       \caption{
    Runge--Kutta schemes: ambient-dimension scaling.
    Fitted slopes for orders $p=1,2,3,4$, respectively:
    $0.501$, $0.475$, $0.501$, $0.501$.
}
       
    \end{subfigure}

    \caption{
        Numerical illustration for the two-point target.
        Top row: coupled $L^2$ error as a function of the step size at
        fixed dimension $d=50$.
        Bottom row: coupled $L^2$ error as a function of the ambient
        dimension at fixed step size $h=0.015625$.
    }
    \label{fig:numerical-two-point}
\end{figure}

\section{Conclusion}

We derived arbitrary-order regularity estimates for the
Ornstein--Uhlenbeck reverse probability-flow ODE from compact
support and a dimension-uniform doubling condition on the
supporting-cap masses of the target distribution.
The key estimate controls centered posterior fluctuations
in the normal direction without growth in $|x|$.
We then used a centered posterior-moment representation closed
under material differentiation and a weighted calculus to
obtain the derivative bounds required by Taylor and
Runge--Kutta methods.
The constants in these bounds are independent of $d$ for
fixed support radius and doubling constant. 

For the order-$p$ truncated Taylor scheme, these estimates
give a $W_2$ discretization error of order $\sqrt d\,h^p$.
In total variation, both the order-$p$ Taylor scheme and
fixed explicit Runge--Kutta methods of order $p$ have
discretization error of order $d\,h^p$.
For fixed target and early-stopping parameters, the
corresponding step complexities are
$\widetilde O(d^{1/(2p)}\varepsilon^{-1/p})$ in $W_2$ and
$\widetilde O(d^{1/p}\varepsilon^{-1/p})$ in total variation.
In both metrics, the polynomial dimension exponent tends
to zero as the numerical order increases.

The analysis uses the exact score and retains the Gaussian
initialization error explicitly.
Future work could incorporate score-estimation error and
develop learnable approximations of the material derivatives
required by Taylor schemes.
Other directions include extending the analysis beyond the
doubling setting using weaker fluctuation estimates, and
developing analogous bounds for higher-order stochastic
reverse-time schemes. A further question is whether these estimates can support a
Bakry--{\'E}mery analysis of the associated time-inhomogeneous
reverse diffusion. Its instantaneous curvature matrix is
$\frac12I_d-2D_x(Lx)$, so our first spatial derivative estimate
provides a dimension-independent, time-dependent curvature
lower bound. The higher-order posterior-moment calculus may
help estimate the derivatives and contractions of the reverse
drift that arise in further $\Gamma$-calculations.
Deriving corresponding gradient estimates with controlled
dimension and time dependence remains a direction for
future work.
\appendix
\section{Examples and geometric interpretation of the doubling condition}
\label{app:edge-doubling}

This appendix gives sufficient conditions and examples for
Assumption~\ref{ass:uniform-edge-doubling}.
Recall that $F_\theta(u)$ denotes the $P_0$-mass within depth $u$
of the supporting hyperplane with outward normal $\theta$.
The assumption requires
\begin{equation*}
    F_\theta(2u)
    \leq
    D_{\mathrm{UD}}F_\theta(u),
    \qquad
    \theta\in\mathbb S^{d-1},
    \quad u>0.
\end{equation*}
For a family of targets in different ambient dimensions,
the same constant $D_{\mathrm{UD}}$ must work for every $d$.

We examine convex,
manifold, and atomic targets, keeping track of the parameters
that control the doubling constant.
We then discuss its dependence on the ambient dimension
and explain how local doubling can be extended to all scales.

\subsection{Targets of fixed intrinsic dimension}

For convex targets, the doubling constant can be controlled by
the intrinsic dimension and the density ratio.
This gives examples for which the constant remains bounded
as the ambient dimension increases.

\begin{proposition}[Convex targets of fixed intrinsic dimension]
\label{prop:intrinsic-convex-doubling}
Let $K\subset\mathbb R^d$ be a compact convex set whose affine
hull has dimension $k$.
Suppose that $P_0$ has a density $f$ with respect to the intrinsic
volume measure $\operatorname{vol}_k$ on $K$, satisfying
\begin{equation*}
    0<c_-\leq f(y)\leq c_+<\infty
\end{equation*}
for $\operatorname{vol}_k$-almost every $y\in K$.
Then
\begin{equation*}
    F_\theta(2u)
    \leq
    \frac{c_+}{c_-}2^k F_\theta(u)
\end{equation*}
for every $\theta\in\mathbb S^{d-1}$ and $u>0$.
\end{proposition}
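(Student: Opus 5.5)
The plan is to reduce the doubling inequality to a comparison of $k$-dimensional volumes of convex slabs, and then to apply a homothety argument inside the affine hull of $K$. Since $P_0$ has density $f$ with respect to $\operatorname{vol}_k$ on $K$, with $c_-\le f\le c_+$, we have
\[
c_-\operatorname{vol}_k(K_u)\le F_\theta(u)\le c_+\operatorname{vol}_k(K_u),
\qquad
K_u:=\{y\in K:\ U_\theta(y)\le u\}.
\]
Note that $M=\operatorname{supp}P_0=K$ up to the closure of the positive-density region; because $f\ge c_-$ a.e.\ on the convex body $K$ with nonempty relative interior, the supremum $\sup_M\langle\theta,y\rangle$ equals $\sup_K\langle\theta,y\rangle$. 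It therefore suffices to prove the purely geometric bound
\[
\operatorname{vol}_k(K_{2u})\le 2^k\operatorname{vol}_k(K_u),
\]
after which the ratio $c_+/c_-$ appears directly.

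For the geometric step, fix a maximizer $y^*\in K$ of $\langle\theta,\cdot\rangle$, so that $U_\theta(y^*)=0$. Consider the homothety $\Phi(y)=y^*+\tfrac12(y-y^*)$, which maps the affine hull of $K$ into itself and scales $\operatorname{vol}_k$ by $2^{-k}$. By convexity, $\Phi(K)\subseteq K$. Moreover, $U_\theta(\Phi(y))=\tfrac12 U_\theta(y)$, since $U_\theta$ is affine and vanishes at $y^*$. Hence $\Phi(K_{2u})\subseteq K_u$, and so
\[
\operatorname{vol}_k(K_u)\ge 2^{-k}\operatorname{vol}_k(K_{2u}).
\]

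Combining the two displays gives
\[
F_\theta(2u)\le c_+\operatorname{vol}_k(K_{2u})\le c_+2^k\operatorname{vol}_k(K_u)\le \frac{c_+}{c_-}2^kF_\theta(u).
\]

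There is one degenerate case to handle. When $\theta$ is orthogonal to the affine hull direction, $U_\theta\equiv0$ on $K$, both sides equal $1$, and the bound is trivial. Beyond that, the only real point needing care is identifying $M$ with $K$; everything else is routine.
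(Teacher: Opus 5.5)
Your proposal is correct and matches the paper's proof essentially step for step. Both use the homothety toward a supporting point, the identity $U_\theta\circ\Phi=\tfrac12U_\theta$ giving the inclusion of the $2u$-cap into the $u$-cap, the $2^{-k}$ scaling of $\operatorname{vol}_k$, and the density sandwich $c_-\le f\le c_+$. Your separate degenerate case is unnecessary, since the homothety argument already covers it.
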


\begin{proof}
The lower density bound implies that $\operatorname{supp}P_0=K$.
Fix $\theta\in\mathbb S^{d-1}$ and choose a supporting point
$y_\theta\in K$ such that
$
    \langle\theta,y_\theta\rangle
    =
    \max_{y\in K}\langle\theta,y\rangle.
$
Write
$
    C_\theta(u)
    :=
    \{y\in K:U_\theta(y)\leq u\}
$
for the supporting cap of depth $u$.
Consider the contraction
$
    H(y)
    :=
    y_\theta+\frac12(y-y_\theta).
$
Convexity gives $H(K)\subseteq K$, while the choice of
$y_\theta$ implies
\begin{equation*}
    U_\theta(H(y))
    =
    \frac12 U_\theta(y),
    \qquad y\in K.
\end{equation*}
Hence $H(C_\theta(2u))\subseteq C_\theta(u)$.
Since $H$ multiplies intrinsic $k$-dimensional volume by
$2^{-k}$,
\begin{equation*}
    2^{-k}\operatorname{vol}_k(C_\theta(2u))
    =
    \operatorname{vol}_k(H(C_\theta(2u)))
    \leq
    \operatorname{vol}_k(C_\theta(u)).
\end{equation*}
The density bounds therefore give
\begin{equation*}
\begin{aligned}
    F_\theta(2u)
    \leq
    c_+\operatorname{vol}_k(C_\theta(2u))
    \leq
    c_+2^k\operatorname{vol}_k(C_\theta(u))
    \leq
    \frac{c_+}{c_-}2^k F_\theta(u).
\end{aligned}
\end{equation*}
\end{proof}

Consequently, a family of such targets satisfies
dimension-uniform doubling whenever the intrinsic dimensions
$k$ and the density ratios $c_+/c_-$ remain uniformly bounded.

\paragraph{Smooth manifold targets.}
Let $M=\operatorname{supp}P_0$ be a compact smooth
$k$-dimensional manifold, and suppose that $P_0$ has a density
bounded above and below by positive constants with respect to
intrinsic volume.
Fix a direction $\theta$ and suppose that every maximizer of
$y\mapsto\langle\theta,y\rangle$ admits a local parametrization
$\Psi$, centered at that maximizer, such that
\begin{equation*}
    c_1|z|^2
    \leq
    U_\theta(\Psi(z))
    \leq
    c_2|z|^2
\end{equation*}
for $z$ sufficiently close to $0$, with $0<c_1\leq c_2$.
These quadratic bounds, together with the density and local
Jacobian bounds, give a cap mass comparable to $u^{k/2}$
near each maximizer.
The maximizers are isolated, so compactness implies that finitely
many such charts contain every sufficiently shallow cap.
Consequently,
\begin{equation*}
    F_\theta(u)\asymp u^{k/2}
\end{equation*}
for sufficiently small $u$, with comparison constants and a
validity scale that may depend on $\theta$.

A sufficient uniform version of this estimate is the following:
for every direction in which the supporting functional is
nonconstant on $M$, suppose there is a scale $u_\theta>0$ such that
\begin{equation*}
    c_-\left(\frac{u}{u_\theta}\right)^{k/2}
    \leq
    F_\theta(u)
    \leq
    c_+\left(\frac{u}{u_\theta}\right)^{k/2},
    \qquad 0<u\leq u_\theta,
\end{equation*}
where $0<c_-\leq c_+<\infty$ are independent of $\theta$.
For $u\geq u_\theta$, monotonicity gives
$c_-\leq F_\theta(u)\leq1$.
Combining these bounds at $u$ and $2u$ gives
\begin{equation*}
    D_{\mathrm{UD}}
    =
    \frac{\max\{1,c_+\}}{c_-}2^{k/2}.
\end{equation*}
The direction-dependent scale cancels in the doubling ratio.
Directions in which the supporting functional is constant have
$F_\theta(u)=1$ and already satisfy doubling.
The exponent $k/2$ here, as opposed to $k$ for the convex targets of
Proposition~\ref{prop:intrinsic-convex-doubling}, reflects quadratic
rather than linear contact of the support with its supporting
hyperplane; both yield doubling constants that depend on the intrinsic
dimension $k$ but not on the ambient dimension $d$, which is all that
Assumption~\ref{ass:uniform-edge-doubling} requires.

Consequently, these targets satisfy dimension-uniform doubling
when $k$ and $\max\{1,c_+\}/c_-$ remain uniformly bounded across
the family.
The uniform cap estimates must be verified from the geometry.
Smoothness alone does not guarantee them; degenerate contact
can change the cap exponent or prevent a power-law description.

\subsection{Finite atomic targets}

For finite atomic targets, the smallest atom weight gives a
simple bound on the doubling constant.

\begin{proposition}[Atomic targets]

Suppose that
\begin{equation*}
    P_0
    =
    \sum_{i=1}^n p_i\delta_{y_i},
    \qquad
    \sum_{i=1}^n p_i=1,
    \qquad
    p_i\geq p_*>0.
\end{equation*}
Then Assumption~\ref{ass:uniform-edge-doubling} holds with
$
    D_{\mathrm{UD}}=p_*^{-1}.
$
In particular, for the equally weighted empirical measure
on $n$ points, one may take $D_{\mathrm{UD}}=n$.
\end{proposition}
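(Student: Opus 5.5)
The plan is a direct computation of cap masses. Fix $\theta\in\mathbb S^{d-1}$ and $u>0$. Since $M=\operatorname{supp}P_0=\{y_1,\ldots,y_n\}$ (after discarding atoms of zero weight, which cannot occur because $p_i\geq p_*>0$), the supporting value $\sup_{y'\in M}\langle\theta,y'\rangle$ is attained at some atom $y_{i_\theta}$. This atom has depth $U_\theta(y_{i_\theta})=0$. Hence for every $u>0$ the cap $\{U_\theta\leq u\}$ contains $y_{i_\theta}$, and
\[
F_\theta(u)\geq p_{i_\theta}\geq p_*.
\]

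On the other hand, $F_\theta(2u)\leq 1$ trivially, since $P_0$ is a probability measure. Combining the two bounds gives
\[
F_\theta(2u)\leq 1\leq p_*^{-1}F_\theta(u),
\]
which is Assumption~\ref{ass:uniform-edge-doubling} with $D_{\mathrm{UD}}=p_*^{-1}\geq1$. The constant is uniform in $\theta$ and $u$. It is also independent of $d$, so it serves as a dimension-uniform constant for any family whose minimal weights are bounded below uniformly. For the equally weighted empirical measure, $p_*=1/n$, which gives $D_{\mathrm{UD}}=n$.

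No step is a genuine obstacle. The only point needing care is that the supremum defining $U_\theta$ is attained at an atom of positive mass. This holds because the support is finite and equals the set of atoms, so the maximizer is one of the $y_i$.
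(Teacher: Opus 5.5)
Your proposal is correct and follows the paper's own argument. You take a maximizing atom $y_{i_\theta}$ of depth zero, which lies in every cap, so $F_\theta(u)\ge p_*$. Combined with $F_\theta(2u)\le 1$, this gives $D_{\mathrm{UD}}=p_*^{-1}$, and your remark that $p_*^{-1}\ge 1$ correctly checks the normalization required in Assumption~\ref{ass:uniform-edge-doubling}.
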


\begin{proof}
Fix $\theta\in\mathbb S^{d-1}$ and choose an index $i_\theta$
such that
\begin{equation*}
    \langle\theta,y_{i_\theta}\rangle
    =
    \max_{1\leq i\leq n}\langle\theta,y_i\rangle.
\end{equation*}
Then $U_\theta(y_{i_\theta})=0$, so this atom belongs to every
supporting cap of positive depth. Consequently,
\begin{equation*}
    F_\theta(u)
    \geq
    p_{i_\theta}
    \geq
    p_*,
    \qquad u>0.
\end{equation*}
Since $F_\theta(2u)\leq1$, we obtain
\begin{equation*}
    F_\theta(2u)
    \leq
    1
    \leq
    p_*^{-1}F_\theta(u).
\end{equation*}
\end{proof}

The bound is independent of the atom locations and the ambient
dimension. Thus a family of finite atomic targets satisfies
dimension-uniform doubling whenever the atom weights have a
common positive lower bound.
For empirical targets, the estimate retains a dependence on
the number of atoms and therefore does not provide uniformity
in the sample size.

\subsection{Dependence on the ambient dimension}

Even uniform distributions on smooth convex bodies can have
doubling constants that grow rapidly with the ambient dimension.

Consider the uniform distribution on the Euclidean unit ball
$B_1(0)\subset\mathbb R^d$.
By rotational symmetry, $F_\theta$ is independent of $\theta$,
so it suffices to consider $\theta=e_1$.
Let $\omega_m$ denote the volume of the unit ball in
$\mathbb R^m$, with $\omega_0=1$.
At depth $s=1-y_1$, the cross-section is a
$(d-1)$-dimensional ball of radius $\sqrt{2s-s^2}$.
Therefore,
\begin{equation*}
    F_\theta(u)
    =
    \frac{\omega_{d-1}}{\omega_d}
    \int_0^u (2s-s^2)^{(d-1)/2}\,ds,
    \qquad 0<u\leq2.
\end{equation*}
For each fixed $d$, this gives
\begin{equation*}
    F_\theta(u)
    \sim
    \frac{2^{(d+1)/2}\omega_{d-1}}
         {(d+1)\omega_d}
    u^{(d+1)/2},
    \qquad u\downarrow0.
\end{equation*}
Consequently,
\begin{equation*}
    \lim_{u\downarrow0}
    \frac{F_\theta(2u)}{F_\theta(u)}
    =
    2^{(d+1)/2}.
\end{equation*}
Every admissible doubling constant must therefore satisfy
\begin{equation*}
    D_{\mathrm{UD}}
    \geq
    2^{(d+1)/2}.
\end{equation*}
Thus this family fails the dimension-uniformity requirement in
Assumption~\ref{ass:uniform-edge-doubling}, despite its fixed
support radius, smooth convex boundary, and rotational symmetry.

More generally, if
$
    F_\theta(u)\sim c_\theta u^{\alpha_\theta}
$
as $u\downarrow0$, with $c_\theta>0$, then
$
    F_\theta(2u)/F_\theta(u)\to2^{\alpha_\theta}.
$
Hence any doubling constant satisfies
$D_{\mathrm{UD}}\geq2^{\alpha_\theta}$.
This gives a necessary restriction on the cap exponents: a
dimension-uniform doubling constant requires the local cap exponents
$\alpha_\theta$ to remain bounded uniformly in $\theta$ and $d$.

\subsection{Local versus all-scale doubling}

At fixed $t>0$ and direction $\theta$, increasing the
exponential-tilt strength favors smaller cap depths; see the
posterior representation in the proof of
Theorem~\ref{thm:sec-3-normal-fluctuation-two-scale}.
This explains the relevance of local cap estimates.
To extend such estimates to all scales, we also control the
cap mass at a positive depth.

Suppose that there exist $u_*>0$ and $D_{\mathrm{loc}}\geq1$
such that
\begin{equation}\label{eq:local-edge-doubling-appendix}
    F_\theta(2u)
    \leq
    D_{\mathrm{loc}}F_\theta(u),
    \qquad
    \theta\in\mathbb S^{d-1},
    \quad 0<u\leq u_*.
\end{equation}
Assume in addition that
\begin{equation*}
    \inf_{\theta\in\mathbb S^{d-1}}
    F_\theta(u_*)
    \geq
    m_*>0.
\end{equation*}
For $u\geq u_*$, monotonicity gives
$F_\theta(u)\geq F_\theta(u_*)\geq m_*$, and hence
\begin{equation*}
    F_\theta(2u)
    \leq
    1
    \leq
    m_*^{-1}F_\theta(u).
\end{equation*}
Combining this with
\eqref{eq:local-edge-doubling-appendix}, we obtain
Assumption~\ref{ass:uniform-edge-doubling} with
\begin{equation*}
    D_{\mathrm{UD}}
    =
    \max\{D_{\mathrm{loc}},m_*^{-1}\}.
\end{equation*}
Thus the resulting doubling constant is independent of the
ambient dimension whenever $D_{\mathrm{loc}}$ and $m_*^{-1}$
are uniformly bounded across the family of targets.

The same argument applies to direction-dependent cutoff scales
$u_\theta$, provided that the local doubling constant and the
lower bound on $F_\theta(u_\theta)$ remain uniform.
This allows local geometric estimates to be used even when
their range of validity varies with the direction.

The normal-fluctuation estimate in the main text applies to
every $x$ at each positive time, and therefore covers all
exponential-tilt strengths.
We use the all-scale formulation so that the exponential-tilt
lemma can be applied directly, with the required cap-mass
control expressed through $D_{\mathrm{UD}}$.

\section{Recursive conditional representation of the material derivatives}
\label{app:recursive-material-representation}

The analysis in Section~\ref{sec:sec3-centered-posterior-moment-normal-form} controls the material derivatives $L^j x$ through their centered
posterior normal form. There is also a complementary representation of
these quantities as recursively generated conditional expectations under
the forward Gaussian coupling. This representation is not used in the
regularity estimates or in the convergence proof; we record it here as a
structural description of the Taylor coefficients appearing in the
truncated Taylor scheme. Recall that
\begin{equation*}
    \mu_{x,t}(dy)
    =
    \frac{
        \exp\left(
            -\frac{|x-\alpha(t)y|^2}{2\beta(t)^2}
        \right)P_0(dy)
    }{
        \int
        \exp\left(
            -\frac{|x-\alpha(t)z|^2}{2\beta(t)^2}
        \right)P_0(dz)
    }.
\end{equation*}
For a sufficiently regular vector field $v=v(t,x)$, define
\begin{equation*}
 L_v
    :=
    -\partial_t+D_x[\cdot][v],
\end{equation*}
and set
\begin{equation}
\begin{aligned}
    \Xi(t,x,y;v)
    &:=
  L_v
    \left(
        -\frac{|x-\alpha(t)y|^2}{2\beta(t)^2}
    \right)
    \\
    &=
    -\frac{\alpha'(t)}{\beta(t)^2}
    \langle x-\alpha(t)y,y\rangle
    -
    \frac{\beta'(t)}{\beta(t)^3}
    |x-\alpha(t)y|^2
    \\
    &\qquad
    -
    \frac{
        \langle x-\alpha(t)y,v\rangle
    }{
        \beta(t)^2
    }.
\end{aligned}
\end{equation}
We abbreviate $\Xi_v(t,x,y):=\Xi(t,x,y;v)$.

 Let $\varphi(t,x,y)$ be sufficiently regular and integrable. Direct computation gives the following posterior differentiation formula.

\begin{equation}\label{eq:posterior-material-differentiation}
 L_v
    \mathbb E_{\mu_{x,t}}[\varphi]
    =
    \mathbb E_{\mu_{x,t}}
    \left[ L_v\varphi
    \right]
    +
    \mathbb E_{\mu_{x,t}}
    \left[
        \left(
            \varphi
            -
            \mathbb E_{\mu_{x,t}}[\varphi]
        \right)
        \Xi_v(t,x,Y)
    \right].
\end{equation}
We now specialize this identity to the reverse probability flow. Set 
\begin{equation*} F_j(t,x) := L^j x(t,x), \qquad j\geq1. \end{equation*} 
In particular,
\begin{equation*} F_1(t,x) = Lx(t,x) = \frac{1}{2}\hat{s}(t,x)=\frac12 \left( a(t)x+\lambda(t)\bar y_{x,t} \right). \end{equation*} 
 For formal vector variables $ v_1,\ldots,v_j\in\mathbb R^d $
and a function 
$ \Phi = \Phi(t,x,y;v_1,\ldots,v_{j-1}), $ 
define the lifted material derivative

\begin{equation}\label{eq:lifted-material-derivative} 
\mathfrak D_j\Phi := -\partial_t\Phi + D_x\Phi[v_1] + \sum_{r=1}^{j-1} D_{v_r}\Phi[v_{r+1}]. 
\end{equation} 
The role of the last terms is to encode the identities $ LF_r=F_{r+1} $ without differentiating the fields $F_r$ explicitly. Define the first target by 

\begin{equation} \mathcal T_1(t,x,y) := \frac12 \left( a(t)x+\lambda(t)y \right), \end{equation} 
and, recursively, for $j\geq1$, define 

\begin{equation}\label{eq:recursive-material-target-appendix} 
\begin{aligned} \mathcal T_{j+1} (t,x,y;v_1,\ldots,v_j) := &\mathfrak D_j \mathcal T_j (t,x,y;v_1,\ldots,v_{j-1}) \\ &+ \left( \mathcal T_j (t,x,y;v_1,\ldots,v_{j-1}) - v_j \right) \Xi_{v_1}(t,x,y). 
\end{aligned} 
\end{equation}
For every fixed $t>0$, these targets are explicit vector-valued polynomials in $x$, $y$, and the formal variables $v_r$. Indeed, $\Xi_v$ is quadratic in $(x,y,v)$, and \eqref{eq:recursive-material-target-appendix} shows inductively that $\mathcal T_j$ has total degree at most $2j-1$. 

\begin{proposition}[Recursive conditional representation] \label{prop:recursive-conditional-material} For every integer $j\geq1$, 

\begin{equation}\label{eq:recursive-conditional-material}  F_j(t,x) = \mathbb E_{\mu_{x,t}} \left[ \mathcal T_j \left( t,x,Y; F_1(t,x),\ldots,F_{j-1}(t,x) \right) \right]. 
\end{equation}
For $j=1$, the list of inserted fields is empty. \end{proposition}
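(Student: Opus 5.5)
The proof goes by induction on $j$, with the posterior differentiation formula \eqref{eq:posterior-material-differentiation} as the only analytic input. For $j=1$ the claim reads $F_1=\mathbb E_{\mu_{x,t}}[\tfrac12(a x+\lambda Y)]=\tfrac12(a x+\lambda\bar y_{x,t})$, which is the definition of $F_1=Lx$.

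For the inductive step, assume \eqref{eq:recursive-conditional-material} holds at order $j$. Write
\[
G_j(t,x,y):=\mathcal T_j\bigl(t,x,y;F_1(t,x),\ldots,F_{j-1}(t,x)\bigr),
\]
so that $F_j=\mathbb E_{\mu_{x,t}}[G_j]$. Since $L=-\partial_t+D_x[\cdot][F_1]=L_{F_1}$, apply \eqref{eq:posterior-material-differentiation} with $v=F_1$ and $\varphi=G_j$:
\[
F_{j+1}=LF_j=\mathbb E_{\mu_{x,t}}[L_{F_1}G_j]+\mathbb E_{\mu_{x,t}}\bigl[(G_j-F_j)\,\Xi_{F_1}(t,x,Y)\bigr].
\]
Here $L_{F_1}$ acts on $G_j$ with $y$ held fixed. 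The identity needed is therefore
\[
L_{F_1}G_j=\bigl(\mathfrak D_j\mathcal T_j\bigr)\bigl(t,x,y;F_1,\ldots,F_j\bigr).
\]
The chain rule gives this. The term $-\partial_t$ hits either the explicit $t$-dependence of $\mathcal T_j$ or the inserted fields, and $D_x[\cdot][F_1]$ does likewise. Collecting, for each slot $r$ the combination $(-\partial_t+D_x[\cdot][F_1])F_r=LF_r=F_{r+1}$ contracts against $D_{v_r}\mathcal T_j$. This reproduces exactly the sum $\sum_{r=1}^{j-1}D_{v_r}\mathcal T_j[v_{r+1}]$ evaluated at $v_r=F_r$, together with $-\partial_t\mathcal T_j+D_x\mathcal T_j[v_1]$ at $v_1=F_1$.

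Next, $F_j$ is deterministic given $(t,x)$, so substituting $v_j=F_j$ into $(\mathcal T_j-v_j)\Xi_{v_1}$ matches the second expectation. Comparing with the definition \eqref{eq:recursive-material-target-appendix} evaluated at $(v_1,\ldots,v_j)=(F_1,\ldots,F_j)$ closes the induction.

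The main obstacle is justifying \eqref{eq:posterior-material-differentiation} when $\varphi$ depends on $(t,x)$ through the nonlocal fields $F_r$, and making sure no centering term is lost. The centering is automatic: differentiating the normalized posterior weight $e^{\ell}/\int e^{\ell}$, with $\ell=-|x-\alpha y|^2/(2\beta^2)$, produces $L_v\ell-\mathbb E[L_v\ell]$. Hence $\mathbb E[\varphi(\Xi_v-\mathbb E\Xi_v)]=\mathbb E[(\varphi-\mathbb E\varphi)\Xi_v]$. Interchanging differentiation and integration is legitimate for $t>0$. The integrand $\mathcal T_j$ is polynomial in $(x,y,v)$ with smooth coefficients, $P_0$ is compactly supported (the setting of this section), and the fields $F_r$ are smooth at positive times. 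Dominated convergence on compact $(t,x)$-neighbourhoods therefore applies.
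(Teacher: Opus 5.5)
Your proof is correct and follows the paper's argument step for step. You induct on $j$, use the chain-rule identity $L_{F_1}G_j=(\mathfrak D_j\mathcal T_j)(t,x,y;F_1,\ldots,F_j)$ via $LF_r=F_{r+1}$, and apply the posterior differentiation formula with $v=F_1=Lx$. Your derivation of the centering term and the dominated-convergence justification fill in what the paper calls ``direct computation.''
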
 

\begin{proof} The case $j=1$ follows immediately from the posterior-mean representation: 
\begin{equation*} \begin{aligned} \mathbb E_{\mu_{x,t}} \left[ \mathcal T_1(t,x,Y) \right] = \frac12 \left( a(t)x+\lambda(t)\bar y_{x,t} \right) = F_1(t,x). \end{aligned} 
\end{equation*}
Suppose that \eqref{eq:recursive-conditional-material} holds at order $j$, and abbreviate 

\begin{equation*} \widetilde{\mathcal T}_j(t,x,y) := \mathcal T_j \left( t,x,y; F_1(t,x),\ldots,F_{j-1}(t,x) \right). \end{equation*} Since $ LF_r=F_{r+1},$
the chain rule and \eqref{eq:lifted-material-derivative} give

\begin{equation*} L\widetilde{\mathcal T}_j = \left. \mathfrak D_j\mathcal T_j \right|_{v_r=F_r,\ 1\leq r\leq j}. \end{equation*} Moreover, by the induction hypothesis, 
\begin{equation*} \mathbb E_{\mu_{x,t}} \left[ \widetilde{\mathcal T}_j \right] = F_j(t,x). \end{equation*}
Applying~\eqref{eq:posterior-material-differentiation} with $v=F_1=Lx$ therefore yields \begin{equation*} 
F_{j+1} = LF_j = \mathbb E_{\mu_{x,t}} \left[ \left. \mathfrak D_j\mathcal T_j \right|_{v_r=F_r} \right] + \mathbb E_{\mu_{x,t}} \left[ \left( \widetilde{\mathcal T}_j-F_j \right) \Xi_{F_1} \right].
\end{equation*} By \eqref{eq:recursive-material-target-appendix}, the expression inside the two expectations is precisely \begin{equation*} \mathcal T_{j+1} \left( t,x,Y; F_1,\ldots,F_j \right). \end{equation*} This proves the assertion at order $j+1$. \end{proof}

\paragraph{Population-level learning targets.}
For a fixed time $t$ and assuming that $F_1,\ldots,F_{j-1}$ are known
exactly, define
\begin{equation*}
    G_j := \mathcal T_j \left( t,X_t,X_0; F_1(t,X_t),\ldots,F_{j-1}(t,X_t) \right).
\end{equation*}
Proposition~\ref{prop:recursive-conditional-material} gives
\begin{equation*}
    \mathbb E[G_j\mid X_t=x] = F_j(t,x).
\end{equation*}
Consequently, the standard conditional-expectation characterization of
least squares implies
\begin{equation*}
    F_j(t,\cdot) \in \operatorname*{argmin}_{f} \mathbb E \left[ |f(X_t)-G_j|^2 \right],
\end{equation*}
with uniqueness up to $P_t$-null sets. Thus, when the lower-order
material derivatives are available, the next material derivative admits
an exact population-level denoising target. This is analogous in spirit
to generalized Tweedie identities for higher-order spatial scores
\cite{meng2021estimating}, but the quantities here are the vector-valued
material derivatives directly required by the Taylor scheme.

Two features separate this population-level identity from an error
estimate within the present framework. First, the target $G_j$ is
defined with the exact lower-order fields $F_1,\ldots,F_{j-1}$; if these
are replaced by learned approximations, their errors enter
$\mathcal T_j$ nonlinearly through $\Xi$, so the identity does not yield
a simultaneously consistent objective for $(F_1,\ldots,F_j)$. Second,
and more fundamentally, least-squares training controls a learned $f_j$
only in $L^2(P_t)$, whereas the stability and total-variation arguments
of Section~\ref{sec:high-order-convergence} require bounds on $D_xf_j$
in operator norm, uniformly in $x$---precisely the regularity that
Theorem~\ref{thm:weighted-estimates-Ljx} establishes for the exact
fields $L^jx$ and that no $L^2$ objective enforces. The representation
therefore identifies what is to be learned at each order; estimating
the sampling error of the resulting scheme requires regularity
hypotheses on the learned fields that are outside the scope of this
paper.
\newpage

\bibliography{ref}

@inproceedings{ho2020denoising,
  title={Denoising diffusion probabilistic models},
  author={Ho, Jonathan and Jain, Ajay and Abbeel, Pieter},
  booktitle={Advances in Neural Information Processing Systems},
  volume={33},
  pages={6840--6851},
  year={2020}
}

@article{song2020score,
  title={Score-based generative modeling through stochastic differential equations},
  author={Song, Yang and Sohl-Dickstein, Jascha and Kingma, Diederik P and Kumar, Abhishek and Ermon, Stefano and Poole, Ben},
  journal={arXiv preprint arXiv:2011.13456},
  year={2020}
}

@article{kim2012generalization,
  title={A generalization of {Caffarelli's} contraction theorem via (reverse) heat flow},
  author={Kim, Young-Heon and Milman, Emanuel},
  journal={Mathematische Annalen},
  volume={354},
  number={3},
  pages={827--862},
  year={2012}
}

@article{lee2022convergence,
  title={Convergence for score-based generative modeling with polynomial complexity},
  author={Lee, Holden and Lu, Jianfeng and Tan, Yixin},
  journal={Advances in Neural Information Processing Systems},
  volume={35},
  pages={22870--22882},
  year={2022}
}

@article{chen2022sampling,
  title={Sampling is as easy as learning the score: theory for diffusion models with minimal data assumptions},
  author={Chen, Sitan and Chewi, Sinho and Li, Jerry and Li, Yuanzhi and Salim, Adil and Zhang, Anru R},
  journal={arXiv preprint arXiv:2209.11215},
  year={2022}
}

@article{meng2021estimating,
  title={Estimating high order gradients of the data distribution by denoising},
  author={Meng, Chenlin and Song, Yang and Li, Wenzhe and Ermon, Stefano},
  journal={Advances in Neural Information Processing Systems},
  volume={34},
  pages={25359--25369},
  year={2021}
}

@article{li2025faster,
  title={Faster diffusion models via higher-order approximation},
  author={Li, Gen and Zhou, Yuchen and Wei, Yuting and Chen, Yuxin},
  journal={arXiv preprint arXiv:2506.24042},
  year={2025}
}

@article{huang2025fast,
  title={Fast convergence for high-order {ODE} solvers in diffusion probabilistic models},
  author={Huang, Daniel Zhengyu and Huang, Jiaoyang and Lin, Zhengjiang},
  journal={arXiv preprint arXiv:2506.13061},
  year={2025}
}

@article{huang2025convergence,
  title={Convergence analysis of probability flow {ODE} for score-based generative models},
  author={Huang, Daniel Zhengyu and Huang, Jiaoyang and Lin, Zhengjiang},
  journal={IEEE Transactions on Information Theory},
  year={2025},
  publisher={IEEE}
}

@article{beyler2025convergence,
  title={Convergence of deterministic and stochastic diffusion-model samplers: A simple analysis in {Wasserstein} distance},
  author={Beyler, Eliot and Bach, Francis},
  journal={arXiv preprint arXiv:2508.03210},
  year={2025}
}

@article{wu2024stochastic,
  title={Stochastic {Runge--Kutta} methods: Provable acceleration of diffusion models},
  author={Wu, Yuchen and Chen, Yuxin and Wei, Yuting},
  journal={arXiv preprint arXiv:2410.04760},
  year={2024}
}

@article{pfarr2026higherorder,
  title={Analyzing the error of generative diffusion models: From {Euler--Maruyama} to higher-order schemes},
  author={Pfarr, Emanuel and Timofte, Radu and Werner, Frank},
  journal={arXiv preprint arXiv:2601.18425},
  year={2026}
}

@article{de2022convergence,
  title={Convergence of denoising diffusion models under the manifold hypothesis},
  author={De Bortoli, Valentin},
  journal={arXiv preprint arXiv:2208.05314},
  year={2022}
}

@inproceedings{pierretdiffusion,
  title={Diffusion models for Gaussian distributions: Exact solutions and Wasserstein errors},
  year={2025},
  author={Pierret, Emile and Galerne, Bruno},
  booktitle={Forty-second International Conference on Machine Learning}
}

@article{neufeld2026universal,
  title={Universal approximation results for neural networks with non-polynomial activation function over non-compact domains},
  author={Neufeld, Ariel and Schmocker, Philipp},
  journal={Analysis and Applications},
  volume={24},
  number={05},
  pages={1123--1173},
  year={2026},
  publisher={World Scientific}
}

@article{fournier2015rate,
  title={On the rate of convergence in Wasserstein distance of the empirical measure},
  author={Fournier, Nicolas and Guillin, Arnaud},
  journal={Probability theory and related fields},
  volume={162},
  number={3},
  pages={707--738},
  year={2015},
  publisher={Springer}
}

@article{gao2025wasserstein,
  title={Wasserstein Convergence Guarantees for a General Class of Score-Based Generative Models},
  author={Gao, Xuefeng and Nguyen, Hoang M and Zhu, Lingjiong},
  journal={Journal of machine learning research},
  year={2025},
  publisher={JMLR}
}

@inproceedings{silveribeyond,
  title={Beyond Log-Concavity and Score Regularity: Improved Convergence Bounds for Score-Based Generative Models in W2-distance},
  author={Silveri, Marta Gentiloni and Ocello, Antonio},
  booktitle={Forty-second International Conference on Machine Learning},
year = {2025}
}

@article{bruno2025wasserstein,
  title={Wasserstein Convergence of Score-based Generative Models under Semiconvexity and Discontinuous Gradients},
  author={Bruno, Stefano and Sabanis, Sotirios},
  journal={arXiv preprint arXiv:2505.03432},
  year={2025}
}

@inproceedings{chen2023improved,
  title={Improved analysis of score-based generative modeling: user-friendly bounds under minimal smoothness assumptions},
  author={Chen, Hongrui and Lee, Holden and Lu, Jianfeng},
  booktitle={Proceedings of the 40th International Conference on Machine Learning},
  pages={4735--4763},
  year={2023}
}

@inproceedings{benton2024nearly,
  title={Nearly $ d $-linear convergence bounds for diffusion models via stochastic localization},
  author={Benton, Joe and De Bortoli, Valentin and Doucet, Arnaud and Deligiannidis, George},
  booktitle={International Conference on Learning Representations},
  pages={36916--36936},
  year={2024}
}

@misc{conforti2024klconvergenceguaranteesscore,
      title={KL Convergence Guarantees for Score diffusion models under minimal data assumptions}, 
      author={Giovanni Conforti and Alain Durmus and Marta Gentiloni Silveri},
      year={2024},
      eprint={2308.12240},
      archivePrefix={arXiv},
      primaryClass={math.ST},
      url={https://arxiv.org/abs/2308.12240}, 
}

@article{xixianwasserstein,
  title={Wasserstein Bounds for generative diffusion models with Gaussian tail targets},
  author={Wang, Xixian and Wang, Zhongjian},
  journal={Transactions on Machine Learning Research},year={2026}
}

@article{haussmann1986time,
  title={Time reversal of diffusions},
  author={Haussmann, Ulrich G and Pardoux, Etienne},
  journal={The Annals of Probability},
  pages={1188--1205},
  year={1986},
  publisher={JSTOR}
}

@article{And_80,
    author = {B. Anderson},
    title = {Reverse-time diffusion equation models},
    journal = {Stoch. Process. Appl.},
  volume={12(3)},
   pages={313-326},
    year = {1982}
}

@article{meng2025pathway,
  title={Pathway to $ O (\sqrt {d}) $ Complexity bound under Wasserstein metric of flow-based models},
  author={Meng, Xiangjun and Wang, Zhongjian},
  journal={arXiv preprint arXiv:2512.06702},
  year={2025}
}

@book{hairer1993solving,
  title={Solving ordinary differential equations I: Nonstiff problems},
  author={Hairer, Ernst and Wanner, Gerhard and N{\o}rsett, Syvert P},
  year={1993},
  publisher={Springer}
}

@book{kloeden1992numerical,
  title     = {Numerical Solution of Stochastic Differential Equations},
  author    = {Kloeden, Peter E. and Platen, Eckhard},
  series    = {Applications of Mathematics},
  volume    = {23},
  year      = {1992},
  publisher = {Springer-Verlag},
  address   = {Berlin},
  doi       = {10.1007/978-3-662-12616-5}
}

@article{lu2023mathematical,
  title={Mathematical analysis of singularities in the diffusion model under the submanifold assumption},
  author={Lu, Yubin and Wang, Zhongjian and Bal, Guillaume},
  journal={arXiv preprint arXiv:2301.07882},
  year={2023}
}

@incollection{bakry1985diffusions,
  author    = {Bakry, Dominique and {\'E}mery, Michel},
  title     = {Diffusions hypercontractives},
  booktitle = {S{\'e}minaire de Probabilit{\'e}s XIX 1983/84},
  series    = {Lecture Notes in Mathematics},
  volume    = {1123},
  pages     = {177--206},
  publisher = {Springer},
  year      = {1985}
}

@book{bakry2014analysis,
  author    = {Bakry, Dominique and Gentil, Ivan and Ledoux, Michel},
  title     = {Analysis and Geometry of {Markov} Diffusion Operators},
  series    = {Grundlehren der mathematischen Wissenschaften},
  volume    = {348},
  publisher = {Springer},
  address   = {Cham},
  year      = {2014},
  doi       = {10.1007/978-3-319-00227-9}
}

@article{dziuk2013finite,
  author  = {Dziuk, Gerhard and Elliott, Charles M.},
  title   = {Finite element methods for surface {PDEs}},
  journal = {Acta Numerica},
  volume  = {22},
  pages   = {289--396},
  year    = {2013},
  doi     = {10.1017/S0962492913000056}
}

@article{talay1990expansion,
  author  = {Talay, Denis and Tubaro, Luciano},
  title   = {Expansion of the global error for numerical schemes solving stochastic differential equations},
  journal = {Stochastic Analysis and Applications},
  volume  = {8},
  number  = {4},
  pages   = {483--509},
  year    = {1990},
  doi     = {10.1080/07362999008809220}
}

@article{mattingly2010convergence,
  author  = {Mattingly, Jonathan C. and Stuart, Andrew M. and Tretyakov, Michael V.},
  title   = {Convergence of numerical time-averaging and stationary measures via {Poisson} equations},
  journal = {SIAM Journal on Numerical Analysis},
  volume  = {48},
  number  = {2},
  pages   = {552--577},
  year    = {2010},
  doi     = {10.1137/090770527}
}

\end{document}